\documentclass[a4paper,12pt]{article}

\usepackage{psfrag}
\usepackage[leqno]{amsmath}
\usepackage{graphics}
\usepackage{epsfig}
\usepackage[mathscr]{euscript}
\usepackage{mathrsfs}
\usepackage{amsmath,amsthm,amsfonts}
\usepackage{amssymb}
\usepackage{amscd}
\usepackage{enumerate}
\usepackage[dvips]{color}


\theoremstyle{plain}
\newtheorem{lemma}{Lemma}[section]
\newtheorem{theorem}[lemma]{Theorem}
\newtheorem{prop}[lemma]{Proposition}
\newtheorem{cor}[lemma]{Corollary}

\theoremstyle{definition}
\newtheorem{defi}[lemma]{Definition}
\newtheorem{remark}[lemma]{Remark}
\newtheorem{exa}[lemma]{Example}



\newcommand{\Ho}{\mathfrak{H}}
\newcommand{\s}{\mathcal{S}}


\title{Fat Hoffman graphs with smallest eigenvalue
at least $-1-\tau$}

\author{
{\sc Akihiro MUNEMASA}\\
[1ex]
{\small 
Graduate School of Information Sciences,} \\
{\small Tohoku University, 
Sendai 980-8579, Japan} \\
{\small 
{\it E-mail address}: {\tt munemasa@math.is.tohoku.ac.jp}}\\
\\
{\sc Yoshio SANO}\\
[1ex]
{\small
Division of Information Engineering} \\
{\small
Faculty of Engineering, Information and Systems} \\
{\small University of Tsukuba,
Ibaraki 305-8573,
Japan} \\
{\small
{\it E-mail address}: {\tt sano@cs.tsukuba.ac.jp}}\\
\\
{\sc Tetsuji TANIGUCHI}\\
[1ex]
{\small 
Matsue College of Technology, 
Matsue 690-8518, Japan} \\
{\small 
{\it E-mail address}: {\tt tetsuzit@matsue-ct.ac.jp}}
}

\date{}



\begin{document}

\maketitle

\begin{abstract}
In this paper, we show that all fat Hoffman graphs 
with smallest eigenvalue at least 
$-1-\tau$, where $\tau$ is the golden ratio,
can be described by a finite set of fat 
$(-1-\tau)$-ir\-re\-duc\-i\-ble Hoffman graphs.
In the terminology of Woo and Neumaier, we mean
that every fat Hoffman graph with smallest eigenvalue at least 
$-1-\tau$ is an $\mathcal{H}$-line graph, where $\mathcal{H}$
is the set of isomorphism classes of maximal fat 
$(-1-\tau)$-ir\-re\-duc\-i\-ble Hoffman graphs.
It turns out that there are 
$37$ fat $(-1-\tau)$-ir\-re\-duc\-i\-ble Hoffman graphs, up to isomorphism. 
\end{abstract}

\noindent
{\bf Keywords:}
Hoffman graph; 
line graph;
graph eigenvalue; 
special graph 

\noindent
{\bf 2010 Mathematics Subject Classification:}
05C50, 05C75

\newpage

\section{Introduction}\label{sec:001}

P.~J.~Cameron, J.~M.~Goethals, J.~J.~Seidel, and 
E.~E.~Shult \cite{root} characterized graphs 
whose adjacency matrices have smallest eigenvalue at least $-2$ 
by using root systems.
Their results revealed that 
graphs with smallest eigenvalue 
at least $-2$ 
are generalized line graphs, except a finite number of graphs
represented by the root system $E_8$. 
Another characterization for generalized line graphs were given by 
D.~Cvetkovi\'{c}, M.~Doob, and S.~Simi\'{c} \cite{GLG} 
by determinig minimal forbidden subgraphs (see also \cite{new}). 
Note that graphs with smallest eigenvalue greater than $-2$ 
were studied by A.~J.~Hoffman \cite{hoffman0}. 

Hoffman \cite{hoffman1} also studied graphs 
whose adjacency matrices have 
smallest eigenvalue at least $-1-\sqrt{2}$ 
by using a technique of adding cliques to graphs. 
R.~Woo and A.~Neumaier \cite{hlg} formulated Hoffman's idea 
by introducing the notion of Hoffman graphs.  
A Hoffman graph is a simple graph with a distinguished independent set 
of vertices, called fat vertices, which can be considered as 
cliques of size infinity in a sense 
(see Definition~\ref{df:Hg}, and also \cite[Corollary 2.15]{JKMT}). 
To deal with graphs with bounded smallest eigenvalue, 
Woo and Neumaier introduced 
a generalization of line graphs 
by considering decompositions of Hoffman graphs. 
They gave a characterization of graphs 
with smallest eigenvalue at least $-1-\sqrt{2}$ 
in terms of Hoffman 
graphs by classifying
fat indecomposable
Hoffman graphs with smallest eigenvalue at least $-1-\sqrt{2}$.
This led them to prove a theorem which states that
every graph with smallest eigenvalue at least  $-1-\sqrt{2}$
and sufficiently large minimum degree is a subgraph of
a Hoffman graph admitting a decomposition into subgraphs
isomorphic to only four Hoffman graphs.
In the terminology of \cite{hlg}, this means that
every graph with smallest eigenvalue at least  $-1-\sqrt{2}$
and sufficiently large minimum degree is an $\mathcal{H}$-line
graph, where $\mathcal{H}$ is the set of four isomorphism 
classes of Hoffman graphs.
For further studies on 
graphs with smallest eigenvalue at least $-1-\sqrt{2}$, 
see the papers by T.~Taniguchi \cite{paperI,paperII} 
and by H.~Yu \cite{Yu}. 

Recently, 
H. J. Jang, J. Koolen, A. Munemasa, and T. Taniguchi 
\cite{JKMT} made the first step 
to classify the fat indecomposable Hoffman graphs 
with smallest eigenvalue $-3$. 
However, 
it seems that
there are so many such Hoffman graphs.
A key to solve this problem is 
the notion of {\it special graphs} 
introduced by Woo and Neumaier. 
A special graph is 
an edge-signed graph
defined for each Hoffman graph. 
Although non-isomorphic Hoffman graphs may have 
isomorphic special graphs, it is not difficult to
recover all the Hoffman graphs with a given special
graph in some cases.

In this paper, we 
introduce irreducibility of Hoffman graphs and 
classify fat 
$(-1-\tau)$-ir\-re\-duc\-i\-ble 
Hoffman graphs, where $\tau:=\frac{1+\sqrt{5}}{2}$ is the golden ratio.
This is a somewhat more restricted class of Hoffman graphs
than those considered in \cite{JKMT}, and
there are only $37$ such Hoffman graphs. 
As a consequence, 
every 
fat Hoffman graph with smallest eigenvalue 
at least $-1-\tau$
is a subgraph of 
a Hoffman graph admitting a decomposition into subgraphs
isomorphic to only $18$ Hoffman graphs.
In the terminology of \cite{hlg}, this means that
every fat Hoffman graph with smallest eigenvalue 
at least $-1-\tau$
is an $\mathcal{H}$-line
graph, where $\mathcal{H}$ is the set of $18$ isomorphism 
classes of maximal fat 
$(-1-\tau)$-ir\-re\-duc\-i\-ble
Hoffman graphs.

\section{Preliminaries}\label{sec:pre}
\subsection{Hoffman graphs and eigenvalues}

\begin{defi}\label{df:Hg}
A \emph{Hoffman graph} $\frak{H}$ is a pair 
$(H,\mu)$ of a graph $H$ 
and a vertex labeling $\mu:V(H) \to \{ {\bf slim}, {\bf fat} \}$ 
satisfying the following conditions:
(i) 
every vertex with label {\bf fat} is adjacent to at least
one vertex with label {\bf slim};
(ii) 
the vertices with label {\bf fat} are pairwise non-adjacent.

Let $V(\Ho):= V(H)$, $V^s(\Ho):= \mu^{-1}({\bf slim})$, 
$V^f(\Ho):= \mu^{-1}({\bf fat})$, 
and $E(\Ho):= E(H)$. 
We call a vertex 
in $V^s(\Ho)$ 
a \emph{slim vertex}, and
a vertex 
in $V^f(\Ho)$
a \emph{fat vertex} of $\Ho$. 
We represent a Hoffman graph $\Ho$ also by the triple 
$(V^s(\Ho), V^f(\Ho), E(\Ho))$. 

For a vertex $x$ of a Hoffman graph $\Ho$, 
we define $N^f_{\Ho}(x)$ (resp.\ $N^s_{\Ho}(x)$) 
to be the 
set of fat (resp.\ slim) neighbors of $x$ in $\Ho$.
The set of all 
neighbors of $x$ is denoted by $N_{\Ho}(x)$, 
that is, $N_{\Ho}(x) := N^f_{\Ho}(x) \cup N^s_{\Ho}(x)$. 

A Hoffman graph $\Ho$ is said to be \emph{fat} 
if every slim vertex of $\Ho$ has a fat neighbor. 
A Hoffman graph is said to be \emph{slim} 
if it has no fat vertex. 

Two Hoffman graphs $\Ho= (H, \mu)$ and $\Ho'= (H', \mu')$
	are said to be \emph{isomorphic} 
	if there exists an isomorphism from $H$ to $H'$ 
	which preserves the labeling.

A Hoffman graph $\Ho' = (H', \mu')$ is called
	an \emph{induced Hoffman subgraph} 
	(or simply a \emph{subgraph}) 
	of another Hoffman graph $\Ho= (H, \mu)$ 
	if $H'$ is an induced subgraph of $H$ and 
$\mu(x) = \mu'(x)$ holds for any vertex $x$ of $\Ho'$.

The subgraph of a Hoffman graph $\Ho$ induced by $V^s(\Ho)$ 
is called the \emph{slim subgraph} of $\Ho$. 
\end{defi}

\begin{defi}\label{df:eigen}
For a Hoffman graph $\frak{H}$, let $A$ be its adjacency matrix,
\[
A=
\begin{pmatrix}
A_s & C \\
C^T & O
\end{pmatrix}
\]
in a labeling in which the fat vertices come last.
The \emph{eigenvalues} of $\Ho$ are the eigenvalues of the
real symmetric matrix $B(\Ho):=A_s-CC^T$.
We denote by $\lambda_{\min}(\Ho)$ the smallest eigenvalue of $B(\Ho)$. 
\end{defi}

\begin{remark}
An ordinary graph $H$ without vertex labeling can be 
regarded as a slim Hoffman graph $\Ho$. 
Then the matrix $B(\Ho)$ 
coincides with the ordinary adjacency matrix of the graph $H$. 
Thus the eigenvalues of $H$ as a slim Hoffman graph 
are the same as the eigenvalues of $H$ 
as an ordinary graph in the usual sense. 
\end{remark}

\begin{exa}\label{ex:H-1-2-3}
Let $\Ho_{{\rm I}}$, $\Ho_{{\rm II}}$, and $\Ho_{{\rm III}}$ 
be the Hoffman graphs defined by 
\[
\begin{array}{lll}
V^s(\Ho_{{\rm I}}) = \{v_1\}, &
V^f(\Ho_{{\rm I}}) = \{f_1\}, &
E(\Ho_{{\rm I}}) = \{\{v_1,f_1\}\}, \\
V^s(\Ho_{{\rm II}}) = \{v_1\}, &
V^f(\Ho_{{\rm II}}) = \{f_1,f_2\}, &
E(\Ho_{{\rm II}}) = \{\{v_1,f_1\},\{v_1, f_2\}\}, \\
V^s(\Ho_{{\rm III}}) = \{v_1,v_2\}, &
V^f(\Ho_{{\rm III}}) = \{f_1\}, &
E(\Ho_{{\rm III}}) = \{\{v_1,f_1\}, \{v_2, f_1\}\} 
\end{array}
\]
(see Figure \ref{fig:Hoffman123}). 
Note that $\lambda_{\min}(\Ho_{{\rm I}})=-1$ and 
$\lambda_{\min}(\Ho_{{\rm II}}) = \lambda_{\min}(\Ho_{{\rm III}})=-2$. 
\end{exa}

\begin{figure}[!h]
\begin{center}
\begin{tabular}{ccccc}
    \includegraphics[scale=0.15]{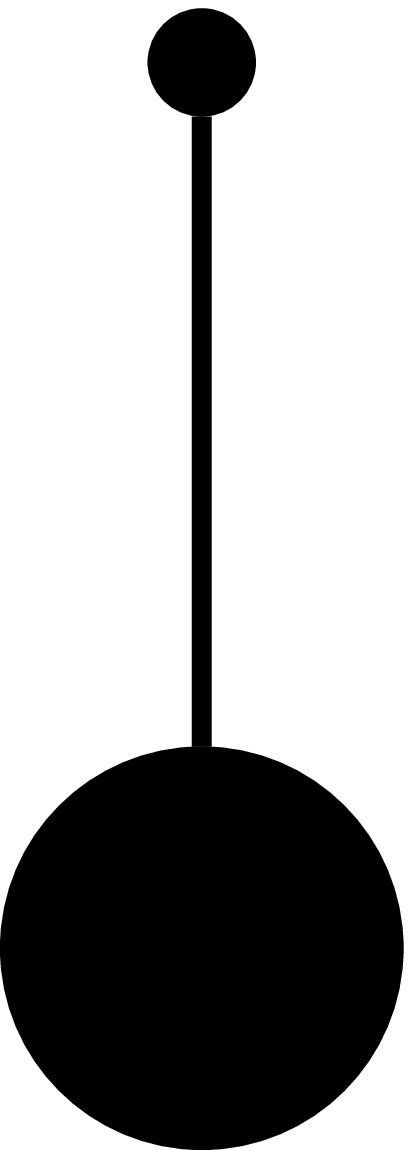} & &
    \includegraphics[scale=0.15]{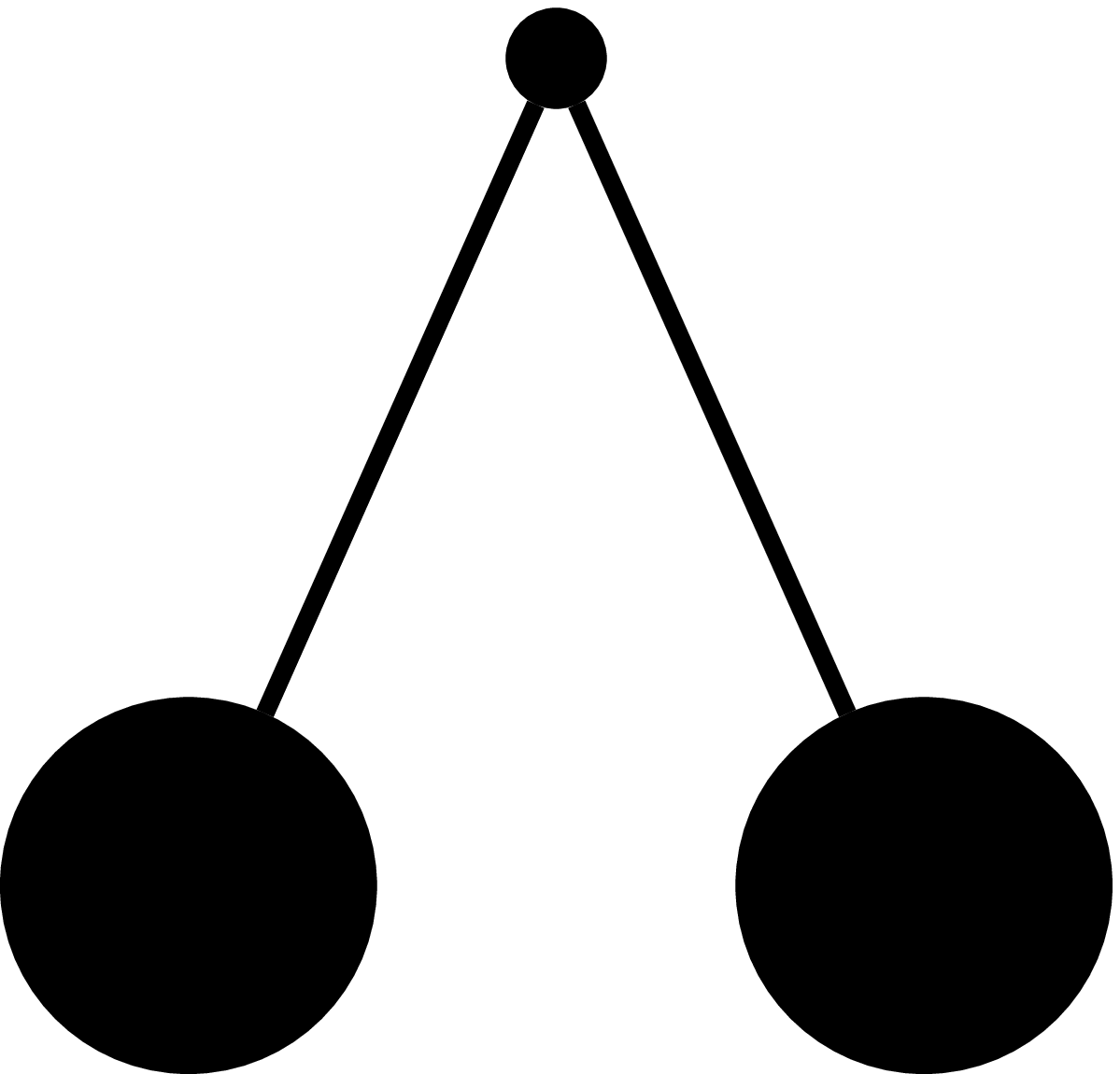} & &
    \includegraphics[scale=0.15]{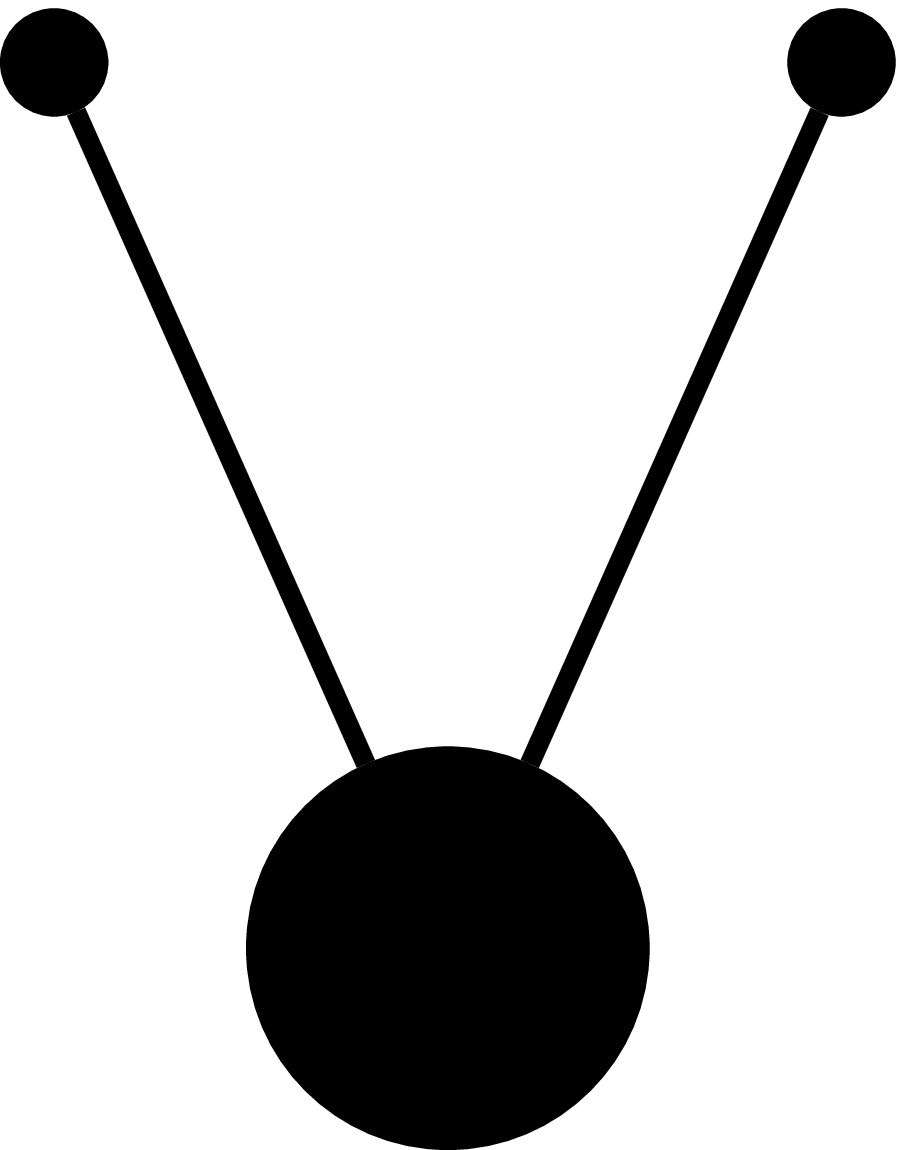} \\
$\Ho_{{\rm I}}$ & &
$\Ho_{{\rm II}}$ & & $\Ho_{{\rm III}}$ \\
\end{tabular}
\end{center}
\caption{
The Hoffman graphs $\Ho_{{\rm I}}$, 
$\Ho_{{\rm II}}$, and $\Ho_{{\rm III}}$
}
\label{fig:Hoffman123}
\end{figure}

\begin{lemma}[{\cite[Lemma 3.4]{hlg}}]\label{lm:BH}
The diagonal entry 
$B(\Ho)_{xx}$ of the matrix $B(\Ho)$ is equal to $-|N^f_{\Ho}(x)|$,
and the off-diagonal entry $B(\Ho)_{xy}$ is equal to 
$A_{xy} - |N^f_{\Ho}(x) \cap N^f_{\Ho}(y)|$. 
\end{lemma}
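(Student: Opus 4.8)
The plan is to unwind Definition~\ref{df:eigen} and compute the entries of $B(\Ho)=A_s-CC^T$ directly from the block structure of the adjacency matrix. First I would fix the labeling in which the fat vertices come last, so that for slim vertices $x,y$ the block $A_s$ records adjacencies among slim vertices, i.e.\ $(A_s)_{xy}=A_{xy}$, while the rectangular block $C$ is the slim--fat incidence matrix: its entry $C_{xf}$ equals $1$ if the slim vertex $x$ is adjacent to the fat vertex $f$, and $0$ otherwise. In particular every entry of $C$ lies in $\{0,1\}$, and the bottom-right block is $O$ because fat vertices are pairwise non-adjacent by Definition~\ref{df:Hg}(ii).

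The key step is to identify $(CC^T)_{xy}$ combinatorially. Expanding the matrix product over the fat vertices gives $(CC^T)_{xy}=\sum_{f\in V^f(\Ho)}C_{xf}C_{yf}$, and since each factor is $0$ or $1$, the product $C_{xf}C_{yf}$ equals $1$ exactly when $f$ is a common fat neighbor of $x$ and $y$. Hence $(CC^T)_{xy}=|N^f_{\Ho}(x)\cap N^f_{\Ho}(y)|$ for all slim $x,y$. Specializing to the diagonal and using $C_{xf}^2=C_{xf}$, the same computation yields $(CC^T)_{xx}=\sum_{f}C_{xf}=|N^f_{\Ho}(x)|$.

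It then remains to combine these with the entries of $A_s$. On the diagonal, since the underlying graph has no loops we have $(A_s)_{xx}=0$, so $B(\Ho)_{xx}=0-|N^f_{\Ho}(x)|=-|N^f_{\Ho}(x)|$. Off the diagonal, $B(\Ho)_{xy}=(A_s)_{xy}-(CC^T)_{xy}=A_{xy}-|N^f_{\Ho}(x)\cap N^f_{\Ho}(y)|$, which is exactly the claimed formula.

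As for difficulty, there is essentially no genuine obstacle here: the statement is a direct bookkeeping consequence of the definition $B(\Ho)=A_s-CC^T$. The only point requiring a moment's care is the correct reading of the block decomposition in Definition~\ref{df:eigen}, and in particular the observation that $C$ is a $0/1$ matrix, which is what lets one replace $C_{xf}^2$ by $C_{xf}$ on the diagonal and interpret the off-diagonal sum as a count of common fat neighbors.
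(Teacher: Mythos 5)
Your computation is correct, and since the paper simply cites this lemma from Woo and Neumaier without reproducing a proof, your direct expansion of $B(\Ho)=A_s-CC^T$ is exactly the standard argument one would give: identifying $(CC^T)_{xy}=\sum_f C_{xf}C_{yf}$ as the count of common fat neighbors and using $(A_s)_{xx}=0$ settles both claims. Nothing is missing.
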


\begin{lemma}[{\cite[Corollary 3.3]{hlg}}]\label{lm:001}
If $\frak{G}$ is an induced Hoffman subgraph of a Hoffman graph $\frak{H}$, 
then $\lambda_{\min}(\frak{G})\geq\lambda_{\min}(\frak{H})$ holds. 
In particular, if $\Gamma$ is the slim
subgraph of $\Ho$, then $\lambda_{\min}(\Gamma)\geq
\lambda_{\min}(\Ho)$.
\end{lemma}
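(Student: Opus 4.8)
The plan is to work entirely with the symmetric matrices $B(\Ho)=A_s-CC^{\top}$ and to exploit two features of this expression: that $CC^{\top}$ is a sum of rank-one positive semidefinite matrices, one for each fat vertex, and that deleting slim vertices corresponds to passing to a principal submatrix. Writing $c_f$ for the column of $C$ indexed by a fat vertex $f$ (that is, the indicator vector of the slim neighbors of $f$), we have $CC^{\top}=\sum_{f\in V^f(\Ho)}c_fc_f^{\top}$. First I would record, using Lemma~\ref{lm:BH}, exactly how the entries of $B(\frak{G})$ relate to those of $B(\Ho)$ when $\frak{G}$ is an induced subgraph with slim vertex set $S'=V^s(\frak{G})$ and fat vertex set $F'=V^f(\frak{G})$.

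The key computation is to identify $B(\frak{G})$ with the principal submatrix, indexed by $S'$, of the matrix $M:=A_s-\sum_{f\in F'}c_fc_f^{\top}$ on $V^s(\Ho)$. This is where Lemma~\ref{lm:BH} does the work: because $\frak{G}$ is induced, for $x,y\in S'$ one has $N^f_{\frak{G}}(x)=N^f_{\Ho}(x)\cap F'$, so the diagonal and off-diagonal entries of $M$ restricted to $S'$ match the formulas $-|N^f_{\frak{G}}(x)|$ and $A_{xy}-|N^f_{\frak{G}}(x)\cap N^f_{\frak{G}}(y)|$ for $B(\frak{G})$ verbatim. On the other hand, $M=B(\Ho)+\sum_{f\in V^f(\Ho)\setminus F'}c_fc_f^{\top}$, so $M$ differs from $B(\Ho)$ by a positive semidefinite matrix.

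With this in hand I would finish by the Courant--Fischer (Rayleigh quotient) characterization of $\lambda_{\min}$. For $v\in\mathbb{R}^{S'}$, let $\tilde v\in\mathbb{R}^{V^s(\Ho)}$ be its extension by zero; then, using $\|\tilde v\|=\|v\|$ and $\tilde v^{\top}B(\Ho)\tilde v\ge\lambda_{\min}(\Ho)\|\tilde v\|^2$,
$$v^{\top}B(\frak{G})v=\tilde v^{\top}M\tilde v=\tilde v^{\top}B(\Ho)\tilde v+\sum_{f\in V^f(\Ho)\setminus F'}\bigl(c_f^{\top}\tilde v\bigr)^2\ \ge\ \lambda_{\min}(\Ho)\,\|v\|^2,$$
since the sum of squares is nonnegative. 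Minimizing the Rayleigh quotient of $B(\frak{G})$ over $v\neq0$ then yields $\lambda_{\min}(\frak{G})\ge\lambda_{\min}(\Ho)$. The ``in particular'' statement is the special case $F'=\varnothing$, where $M=B(\Gamma)=A_s$ and the inequality reads $\lambda_{\min}(\Gamma)\ge\lambda_{\min}(\Ho)$. Equivalently, one can split the argument into two elementary moves---deleting a fat vertex restores a rank-one positive semidefinite term and hence cannot decrease $\lambda_{\min}$ by Weyl's inequality, while deleting a slim vertex passes to a principal submatrix and cannot decrease $\lambda_{\min}$ by Cauchy interlacing---and then chain the two. I do not anticipate a serious obstacle here, as the statement is essentially linear algebra; the only point requiring care is the bookkeeping of the middle paragraph, namely checking via Lemma~\ref{lm:BH} that the fat-neighbor intersections computed inside $\frak{G}$ agree with those computed inside $\Ho$ after intersecting with $F'$, and confirming that the intermediate objects produced by the two moves are again genuine Hoffman graphs.
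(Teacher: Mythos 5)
Your proof is correct. Note that the paper itself gives no proof of this lemma---it is quoted directly from Woo and Neumaier \cite[Corollary 3.3]{hlg}---so there is no internal argument to compare against; your combination of the rank-one decomposition $CC^{\top}=\sum_f c_fc_f^{\top}$, the identification of $B(\frak{G})$ with a principal submatrix of $B(\Ho)$ plus a positive semidefinite perturbation, and the Rayleigh-quotient estimate is exactly the standard linear-algebra argument (eigenvalue interlacing together with Weyl's inequality) on which the cited result rests. The one point you flag as needing care, that the intermediate objects be genuine Hoffman graphs, is in fact immaterial for your main (non-split) argument, since it operates purely at the level of the matrices $M=A_s-\sum_{f\in F'}c_fc_f^{\top}$ and never invokes the Hoffman-graph axioms.
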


\subsection{Decompositions of Hoffman graphs}

\begin{defi}\label{df:002}
A {\em decomposition} of a Hoffman graph $\Ho$ 
is a family $\{\Ho^i\}_{i=1}^n$ of Hoffman subgraphs of $\Ho$
satisfying the following conditions:
\begin{itemize}
\item[{\rm (i)}] 
$V(\Ho)=\bigcup_{i=1}^n V(\Ho^i)$;
\item[{\rm (ii)}] 
$V^s(\Ho^i) \cap V^s(\Ho^j)=\emptyset$ if $i\neq j$;
\item[{\rm (iii)}] 
if $x\in V^s(\Ho^i)$, $y\in V^f(\Ho)$, and $\{x, y\} \in E(\Ho)$, 
then $y \in V(\Ho^i)$; 
\item[{\rm (iv)}] 
if $x \in V^s(\Ho^i)$, $y\in V^s(\Ho^j)$, and $i \neq j$, 
then $|N^f_{\Ho}(x) \cap N^f_{\Ho}(y)| \leq 1$, 
and $|N^f_{\Ho}(x) \cap N^f_{\Ho}(y)| = 1$ 
if and only if $\{x, y\} \in E(\Ho)$.
\end{itemize}
If a Hoffman graph $\Ho$ has a decomposition $\{\Ho^i\}_{i=1}^n$, 
then we write $\Ho=\biguplus_{i=1}^n \Ho^i$. 
\end{defi}

\begin{exa} 
The (slim) complete graph $K_n$ is precisely
the slim subgraph of the Hoffman graph 
$\Ho=\biguplus_{i=1}^n \Ho^i$ 
where each $\Ho^i$ is isomorphic to $\Ho_{{\rm I}}$, 
sharing the unique fat vertex. 

Ordinary line graphs are precisely 
the slim subgraphs of Hoffman graphs
$\Ho=\biguplus_{i=1}^n \Ho^i$,
where each $\Ho^i$ is isomorphic to $\Ho_{{\rm II}}$. 

The (slim) cocktail party graph 
$CP(n)=K_{n \times 2}$
is precisely 
the slim subgraph of the Hoffman graph 
$\Ho=\biguplus_{i=1}^n \Ho^i$ 
where each $\Ho^i$ is isomorphic to $\Ho_{{\rm III}}$, 
sharing the unique fat vertex. 

Generalized line graphs are precisely 
the slim subgraphs of Hoffman graphs
$\Ho=\biguplus_{i=1}^n \Ho^i$,
where each $\Ho^i$ is isomorphic to 
$\Ho_{{\rm II}}$ or $\Ho_{{\rm III}}$
(see \cite{hlg}). 
\end{exa}

\begin{defi}
A Hoffman graph $\Ho$ is said to be {\em decomposable} 
if 
$\Ho$ has a decomposition $\{\Ho^i\}_{i=1}^n$ with $n \geq 2$. 
We say $\Ho$ is {\em indecomposable} 
if $\Ho$ is not decomposable. 
\end{defi}

\begin{exa}
A disconnected Hoffman graph is decomposable.
\end{exa}

\begin{defi}
Let $\alpha$ be a negative real number. 
Let $\Ho$ be a Hoffman graph with $\lambda_{\min}(\Ho) \geq \alpha$.
The Hoffman graph $\Ho$ is said to be {\em $\alpha$-re\-duc\-i\-ble} 
if there exists a Hoffman graph $\Ho'$ containing $\Ho$ as an
induced Hoffman subgraph such that there is a decomposition
$\{\Ho^i\}_{i=1}^2$ of $\Ho'$ with $\lambda_{\min}(\Ho^i)\geq\alpha$
and 
$V^s(\Ho^i) \cap V^s(\Ho) \neq \emptyset$ $(i=1,2)$. 
We say $\Ho$ is {\em $\alpha$-ir\-re\-duc\-i\-ble} 
if $\lambda_{\min}(\Ho) \geq \alpha$ and
$\Ho$ is not $\alpha$-re\-duc\-i\-ble. 
A Hoffman graph $\Ho$ is said to be {\em reducible} 
if $\Ho$ is $\lambda_{\min}(\Ho)$-re\-duc\-i\-ble. 
We say $\Ho$ is {\em irreducible} 
if $\Ho$ is not reducible. 
\end{defi}

\begin{lemma}[{\cite[Lemma 2.12]{JKMT}}]\label{lm:002}
If a Hoffman graph $\Ho$ has a decomposition $\{\Ho^i\}_{i=1}^n$,
then 
$\lambda_{\min}(\Ho)=\min\{\lambda_{\min}(\Ho^i) \mid 1 \leq i \leq n\}$. 
In particular,
an irreducible Hoffman graph is indecomposable.
\end{lemma}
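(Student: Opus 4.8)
The plan is to show that, with respect to the partition of the slim vertices induced by the decomposition, the matrix $B(\Ho)$ is block diagonal with the blocks $B(\Ho^i)$; the eigenvalue formula is then immediate from block-diagonality. Throughout I would use Lemma~\ref{lm:BH} to read off the entries of $B(\Ho)$, and conditions (ii)--(iv) of Definition~\ref{df:002} to compare them with the entries of the $B(\Ho^i)$.

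First I would treat the diagonal blocks. For slim vertices $x,y$ lying in the same part $\Ho^i$, I claim $B(\Ho)_{xy}=B(\Ho^i)_{xy}$. Indeed, condition (iii) forces every fat neighbor of such an $x$ to lie in $\Ho^i$, so $N^f_{\Ho}(x)=N^f_{\Ho^i}(x)$; since $\Ho^i$ is an induced Hoffman subgraph, the adjacency term $A_{xy}$ and the intersection term $|N^f_{\Ho}(x)\cap N^f_{\Ho}(y)|$ appearing in Lemma~\ref{lm:BH} are computed entirely inside $\Ho^i$. This identifies the $i$-th diagonal block of $B(\Ho)$ with $B(\Ho^i)$.

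Next I would treat the off-diagonal blocks. For $x\in V^s(\Ho^i)$ and $y\in V^s(\Ho^j)$ with $i\neq j$, Lemma~\ref{lm:BH} gives $B(\Ho)_{xy}=A_{xy}-|N^f_{\Ho}(x)\cap N^f_{\Ho}(y)|$. Here condition (iv) is exactly what is needed: the intersection has size at most $1$, and it equals $1$ precisely when $\{x,y\}\in E(\Ho)$, that is, precisely when $A_{xy}=1$. In either case the two terms cancel and $B(\Ho)_{xy}=0$. Combined with the previous step, this yields $B(\Ho)=\bigoplus_{i=1}^n B(\Ho^i)$, so the spectrum of $B(\Ho)$ is the union of the spectra of the $B(\Ho^i)$, whence $\lambda_{\min}(\Ho)=\min\{\lambda_{\min}(\Ho^i)\mid 1\leq i\leq n\}$. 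This cancellation forced by condition (iv) is the only delicate point; everything else is bookkeeping.

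For the final assertion I would argue the contrapositive, namely that a decomposable $\Ho$ is reducible. Given a decomposition $\{\Ho^i\}_{i=1}^n$ with $n\geq 2$, I would merge the last $n-1$ parts, setting $\Ho^A:=\Ho^1$ and letting $\Ho^B$ be the Hoffman subgraph induced on $\bigcup_{i\geq 2}V(\Ho^i)$, and then check that $\{\Ho^A,\Ho^B\}$ is again a decomposition: each of conditions (i)--(iv) for the merged family follows directly from the corresponding condition for the original family, since any pair of slim vertices in distinct merged parts already lay in distinct original parts. Taking $\Ho'=\Ho$ itself as the ambient graph, the first part of the lemma gives $\lambda_{\min}(\Ho^A),\lambda_{\min}(\Ho^B)\geq\lambda_{\min}(\Ho)$, while $V^s(\Ho^A)\cap V^s(\Ho)=V^s(\Ho^1)$ and $V^s(\Ho^B)\cap V^s(\Ho)=\bigcup_{i\geq 2}V^s(\Ho^i)$ are both nonempty. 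Hence $\Ho$ is $\lambda_{\min}(\Ho)$-reducible, so it is reducible; equivalently, every irreducible Hoffman graph is indecomposable.
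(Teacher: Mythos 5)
Your proof is correct: the block-diagonality of $B(\Ho)$ with respect to the slim-vertex partition, forced by the cancellation in condition (iv), is exactly the standard argument for this lemma, and your reduction of the ``in particular'' clause to a two-part decomposition via the contrapositive is sound. The paper itself gives no proof (it cites \cite[Lemma 2.12]{JKMT}), and your argument coincides with the one given there.
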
 

\begin{exa}\label{rem:00} 
For a non-neg\-a\-tive integer $t$, 
let $\mathfrak{K}_{1,t}$ be 
the connected Hoffman graph having exactly one slim vertex 
and $t$ fat vertices, i.e., 
\[
\mathfrak{K}_{1,t} 
= (V^s(\mathfrak{K}_{1,t}), V^f(\mathfrak{K}_{1,t}), E(\mathfrak{K}_{1,t}))
= (\{v\}, \{f_1, \ldots, f_t\}, 
\{\{v,f_i\} \mid i=1, \ldots t\}). 
\]
Then 
$\mathfrak{K}_{1,t}$ is irreducible 
and $\lambda_{\min}(\mathfrak{K}_{1,t}) = -t$. 
\end{exa}

\begin{exa}\label{ex:002}
By Example \ref{rem:00}, 
the Hoffman graphs $\Ho_{{\rm I}}$ $(\cong \mathfrak{K}_{1,1})$
 and $\Ho_{{\rm II}}$ $(\cong \mathfrak{K}_{1,2})$
are irreducible.  
The Hoffman graph $\Ho_{{\rm III}}$ is also irreducible. 
\end{exa}

\begin{exa}\label{ex:H-4}
Let $\Ho_{{\rm IV}}$ be the Hoffman graph defined by 
$V^s(\Ho_{{\rm IV}}) = \{v_1,v_2\}$, 
$V^f(\Ho_{{\rm IV}}) = \{f_1,f_2\}$, 
and 
$E(\Ho_{{\rm IV}}) = \{\{v_1,v_2\}, \{v_1,f_1\}, \{v_2, f_2\}\}$. 
The Hoffman graph $\Ho_{{\rm IV}}$ is indecomposable 
but reducible. 
Indeed, it is clear that $\Ho_{{\rm IV}}$ is indecomposable.
Let $\Ho'$ be the Hoffman graph 
obtained from $\Ho_{{\rm IV}}$ 
by adding a new fat vertex $f_3$
and two edges $\{v_1, f_3 \}$ and $\{v_2, f_3\}$. 
The Hoffman graph $\Ho'$ is the sum of two copies of $\Ho_{{\rm II}}$, 
where the newly added fat vertex is shared by both copies, 
that is, $\Ho'$ is decomposable. 
Furthermore, 
$\lambda_{\min}(\Ho_{{\rm II}}) = \lambda_{\min}(\Ho_{{\rm IV}}) =-2$. 
Hence $\Ho_{{\rm IV}}$ is reducible. 
\end{exa}

\begin{prop}
Let $G$ be a slim graph with at least two vertices.
If $G$ has maximum degree $k$, then $G$ is $(-k)$-re\-ducible.
\end{prop}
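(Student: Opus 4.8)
The plan is to exhibit, for $G$ regarded as a slim Hoffman graph, an explicit overgraph $\Ho'$ together with a two-part decomposition witnessing $(-k)$-re\-duc\-i\-bil\-ity. First I would record that $\lambda_{\min}(G)\geq -k$: the adjacency matrix of $G$ has zero diagonal and the sum of the absolute values of the off-diagonal entries in each row equals the degree of the corresponding vertex, which is at most $k$, so Gershgorin's theorem places every eigenvalue of $G$ in $[-k,k]$. (This also follows a posteriori from Lemmas \ref{lm:001} and \ref{lm:002} once $\Ho'$ is built.) We may assume $k\geq 1$, since otherwise $-k$ is not negative and $(-k)$-re\-duc\-i\-bil\-ity is not defined.

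Next I would construct $\Ho'$. Fix a vertex $v$ of $G$ and set $V_1=\{v\}$ and $V_2=V(G)\setminus\{v\}$, which is nonempty because $G$ has at least two vertices. Form $\Ho'$ from $G$ by keeping every vertex of $G$ slim and, for each edge $e=\{x,y\}$ with $x\in V_1$ and $y\in V_2$, adjoining a new fat vertex $f_e$ adjacent precisely to $x$ and $y$. Let $\Ho^i$ be the Hoffman subgraph of $\Ho'$ induced on $V_i$ together with all the new fat vertices. Then $\Ho'$ is a Hoffman graph having $G$ as its slim subgraph, and $V^s(\Ho^i)=V_i$ meets $V^s(G)$, as required. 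I would then verify Definition \ref{df:002}(i)--(iv). Conditions (i)--(iii) are immediate: each new fat vertex is incident to one vertex of each part, hence lies in both $\Ho^1$ and $\Ho^2$, consistently with (iii). The decisive point is (iv): two slim vertices $x\in V_1$, $y\in V_2$ have a common fat neighbor in $\Ho'$ exactly when $f_{\{x,y\}}$ was created, i.e.\ iff $\{x,y\}\in E(G)$, and then that common fat neighbor is unique.

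Finally -- and this is the main step -- I would bound $\lambda_{\min}(\Ho^i)$. By Lemma \ref{lm:BH}, the off-diagonal entries of $B(\Ho^i)$ reproduce the adjacency matrix $A(G[V_i])$, because each new fat vertex has exactly one neighbor in $V_i$ and so is a common fat neighbor of no two vertices of $V_i$; and the diagonal entry at $x$ is $-c(x)$, where $c(x)=\deg_G(x)-\deg_{G[V_i]}(x)$ counts the edges joining $x$ to the opposite part. Thus $B(\Ho^i)+kI=A(G[V_i])+\operatorname{diag}\bigl(k-c(x)\bigr)$, whose diagonal entry at $x$ equals $k-\deg_G(x)+\deg_{G[V_i]}(x)$ while the sum of the absolute values of the off-diagonal entries in row $x$ equals $\deg_{G[V_i]}(x)$. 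Since $\deg_G(x)\leq k$, each diagonal entry dominates the corresponding off-diagonal row sum, so $B(\Ho^i)+kI$ is diagonally dominant with nonnegative diagonal, hence positive semidefinite; equivalently, by Gershgorin, $\lambda_{\min}(\Ho^i)\geq -k$. For $\Ho^1$ one may instead simply note $\Ho^1\cong\mathfrak{K}_{1,\deg(v)}$ and invoke Example \ref{rem:00}.

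The only subtlety to watch is that no unintended common fat neighbors arise among vertices of the same part; this is guaranteed by assigning a separate fat vertex to each crossing edge, which is what makes the off-diagonal block of $B(\Ho^i)$ equal to $A(G[V_i])$ on the nose. Granting this, the argument reduces to the single numerical inequality $\deg_G(x)\leq k$ feeding the diagonal-dominance estimate, so I expect the bookkeeping of the decomposition axioms to be routine and the eigenvalue bound on $\Ho^2$ to be the only genuine obstacle.
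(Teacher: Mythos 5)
Your proof is correct, but it takes a genuinely different route from the paper's. The paper adds a fat vertex for \emph{every} edge of $G$ and decomposes the resulting Hoffman graph into $|V(G)|$ summands $\Ho^x$, one per slim vertex $x$, each induced by $\{x\}\cup N^f_{\Ho}(x)$ and hence isomorphic to $\mathfrak{K}_{1,\deg_G(x)}$ with smallest eigenvalue $-\deg_G(x)\geq -k$ (Example~\ref{rem:00}); no further spectral estimate is needed. You instead split off a single vertex $v$, attach fat vertices only to the edges at $v$, and control the large part $\Ho^2$ by a diagonal-dominance (Gershgorin) bound on $B(\Ho^2)+kI$. Your version has two small advantages: it produces \emph{exactly} the two-part decomposition that the definition of $\alpha$-reducibility literally demands (the paper's $|V(G)|$-part decomposition must implicitly be merged into two summands, with Lemma~\ref{lm:002} controlling the eigenvalue of the merged part), and it records explicitly that $\lambda_{\min}(G)\geq -k$, a hypothesis the definition presupposes. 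The price is the eigenvalue computation for $\Ho^2$, which the paper's more symmetric decomposition avoids entirely since all of its parts are $\mathfrak{K}_{1,t}$'s with known spectrum. Your verification of conditions (i)--(iv) and of $V^s(\Ho^i)\cap V^s(G)\neq\emptyset$ is accurate, and the caveat $k\geq 1$ is appropriate since $\alpha$-reducibility is only defined for negative $\alpha$.
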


\begin{proof}
Let $G$ be a slim graph with maximum degree $k$. 
We define a Hoffman graph $\Ho$ by adding 
a fat vertex for each edge $e$ of $G$ 
and joining it to the two end vertices of $e$. 
Note that $G$ is the slim subgraph of $\Ho$.
For each slim vertex $x\in V^s(\Ho)$, 
let $\Ho^{x}$ be the Hoffman subgraph 
of $\Ho$ induced by $\{x\} \cup N_{\Ho}^f(x)$. 
Then $\Ho^{x}$ is isomorphic to the Hoffman graph 
$\mathfrak{K}_{1,\deg_G(x)}$ defined in Example \ref{rem:00}, 
and we can check that 
$\Ho = \biguplus_{x \in V^s(\Ho)} \Ho^{x}$. 
Since the maximum degree of $G$ is $k$, 
$\lambda_{\min}(\Ho^{x})=-\deg_G(x)\geq-k$. 
Thus $G$ is $(-k)$-re\-ducible. 
\end{proof}

\begin{defi}
Let $\mathcal{H}$ be a family of isomorphism classes 
of Hoffman graphs. 
An {\em $\mathcal{H}$-line graph} is 
an induced Hoffman subgraph of a Hoffman graph 
which has a decomposition $\{\Ho^i\}_{i=1}^n$
such that the isomorphism class of $\Ho^i$ belongs to
$\mathcal{H}$ for all $i=1,\dots,n$. 
\end{defi}

\subsection{The special graphs of Hoffman graphs}\label{sec:004}

\begin{defi}
An {\it edge-signed graph} $\s$ is a pair $(S,\textrm{sgn})$ 
of a graph $S$ and 
a map $\textrm{sgn}:E(S) \to \{+,-\}$. 
Let $V(\s):=V(S)$, 
$E^{+}(\s):= \textrm{sgn}^{-1}(+)$, 
and $E^{-}(\s):= \textrm{sgn}^{-1}(-)$. 
Each element in $E^{+}(\s)$ (resp.\ $E^{-}(\s)$) 
	is called a \emph{$(+)$-edge} 
	(resp.\ a \emph{$(-)$-edge}) of $\s$. 
We represent an edge-signed graph $\s$ also by the triple 
$(V(\s), E^{+}(\s), E^{-}(\s))$. 

An edge-signed graph $\s'=(S',\textrm{sgn}')$ is called 
an {\em induced edge-signed subgraph} 
of an edge-signed graph $\s=(S,\textrm{sgn})$ 
if $S'$ is an induced subgraph 
of $S$ and 
$\textrm{sgn}(e)=\textrm{sgn}'(e)$ 
holds for any edge $e$ of $S'$. 

Two edge-signed graphs $\s$ and $\s'$ 
are said to be \emph{isomorphic} 
if there exists a bijection $\phi: V(\s) \to V(\s')$ 
such that $\{u,v\} \in E^+(\s)$ if and only if 
$\{\phi(u), \phi(v) \} \in E^+(\s')$ and 
that $\{u,v\} \in E^-(\s)$ if and only if 
$\{\phi(u), \phi(v) \} \in E^-(\s')$.

An edge-sined graph $\s$ 
is said to be \emph{connected} (resp.\ \emph{disconnected})
if the graph $(V(\s),E^+(\s) \cup E^-(\s))$ is connected 
(resp.\ disconnected). 
\end{defi}

\begin{exa}
A connected edge-signed graph with at most two vertices 
is isomorphic to one of the edge-signed graphs 
$\s_{1,1}$, $\s_{2,1}$, and $\s_{2,2}$, 
where 
\[
\begin{array}{lll}
V(\s_{1,1}) = \{v_1\}, &
E^+(\s_{1,1}) = \emptyset, &
E^-(\s_{1,1}) = \emptyset, \\
V(\s_{2,1}) = \{v_1,v_2\}, &
E^+(\s_{2,1}) = \{\{v_1,v_2\}\}, &
E^-(\s_{2,1}) = \emptyset, \\
V(\s_{2,2}) = \{v_1,v_2\}, &
E^+(\s_{2,2}) = \emptyset, &
E^-(\s_{2,2}) = \{\{v_1,v_2\}\}. 
\end{array}
\]
(see Figure~\ref{fig:002}  in which 
we draw an edge-signed graph by depicting $(+)$-edges as full lines
and $(-)$-edges as dashed lines).
\end{exa}

\begin{figure}[!h]
\begin{center}
\begin{tabular}{p{1.5cm}p{1.5cm}p{1.5cm}}
\includegraphics[scale=0.6]{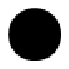} &
\includegraphics[scale=0.6]{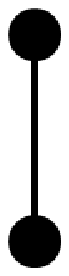} &
\includegraphics[scale=0.6]{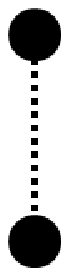} \\
$\s_{1,1}$ & $\s_{2,1}$ & $\s_{2,2}$ \\
\end{tabular}
\end{center}
\caption{The connected edge-signed graphs 
with at most two vertices}
\label{fig:002}
\end{figure}

\begin{defi}
The {\it special graph} of a Hoffman graph $\Ho$ is 
the edge-signed graph 
\[
\s(\Ho) := (V(\s(\Ho)), E^{+}(\s(\Ho)), E^{-}(\s(\Ho))) 
\]
where $V(\s(\Ho)):=V^s(\Ho)$ and 
\[
\begin{array}{l}
E^+(\s(\Ho)) := \{ \{u,v\} \mid 
u,v \in V^s(\Ho), 
u \neq v, 
\{u,v\} \in E(\Ho), 
N^f_{\Ho}(u) \cap N^f_{\Ho}(v) = \emptyset \}, \\
E^-(\s(\Ho)) := \{ \{u,v\} \mid 
u,v\in V^s(\Ho), 
u \neq v, 
\{u,v\} \notin E(\Ho), 
N^f_{\Ho}(u) \cap N^f_{\Ho}(v) \neq \emptyset \}. 
\end{array}
\]
\end{defi}

\begin{lemma}[{\cite[Lemma 3.4]{JKMT}}]\label{lm:007} 
A Hoffman graph $\Ho$ is indecomposable 
if and only if its special graph $\s(\Ho)$ is connected. 
\end{lemma}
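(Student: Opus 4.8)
The plan is to translate the combinatorial conditions of Definition~\ref{df:002} into statements about pairs of slim vertices and match these against the edges of $\s(\Ho)$. The central observation is that for distinct slim vertices $x,y$, the pair $\{x,y\}$ is an edge of $\s(\Ho)$ precisely when exactly one of the two conditions ``$\{x,y\}\in E(\Ho)$'' and ``$N^f_\Ho(x)\cap N^f_\Ho(y)\neq\emptyset$'' holds: a $(+)$-edge when only the first holds, a $(-)$-edge when only the second holds. Equivalently, $\{x,y\}$ is a \emph{non-edge} of $\s(\Ho)$ exactly when these two conditions are both true or both false. I would record this dictionary first, as both implications reduce to it.

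For the direction ``decomposable $\Rightarrow$ disconnected'' (the contrapositive of ``connected $\Rightarrow$ indecomposable''), suppose $\Ho=\biguplus_{i=1}^n\Ho^i$ with $n\geq 2$. By conditions (i) and (ii) the sets $V^s(\Ho^i)$ partition $V^s(\Ho)=V(\s(\Ho))$ into $n\geq 2$ nonempty classes, each nonempty because a fat-only Hoffman subgraph is impossible by Definition~\ref{df:Hg}. I claim no $\s(\Ho)$-edge joins two classes. Taking $x\in V^s(\Ho^i)$, $y\in V^s(\Ho^j)$ with $i\neq j$, condition (iv) gives $|N^f_\Ho(x)\cap N^f_\Ho(y)|\le 1$ with equality iff $\{x,y\}\in E(\Ho)$. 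Hence if $x,y$ are adjacent they have a common fat neighbor (not a $(+)$-edge), and if they are non-adjacent they have none (not a $(-)$-edge); either way $\{x,y\}$ is a non-edge of $\s(\Ho)$ by the dictionary, so $\s(\Ho)$ is disconnected.

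For the converse, I would start from a disconnected $\s(\Ho)$, partition $V^s(\Ho)$ into nonempty sets $V_1,V_2$ with no $\s(\Ho)$-edge between them, and set $\Ho^i$ to be the Hoffman subgraph of $\Ho$ induced on $V_i\cup\bigcup_{x\in V_i}N^f_\Ho(x)$ for $i=1,2$. Conditions (i)--(iii) are then immediate: every fat vertex lies in some $N^f_\Ho(x)$ by Definition~\ref{df:Hg}, so the fat vertices are covered; the slim sets are disjoint and cover $V^s(\Ho)$; and each fat neighbor of $x\in V_i$ is placed in $\Ho^i$ by construction. The content is condition (iv): for $x\in V_1$, $y\in V_2$ the absence of an $\s(\Ho)$-edge means, via the dictionary, that adjacency of $x,y$ in $\Ho$ is equivalent to their having a common fat neighbor.

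The main obstacle is the quantitative part of (iv): it requires not merely a common fat neighbor when $x\sim_\Ho y$, but \emph{at most one}, whereas the non-edge condition alone yields only $|N^f_\Ho(x)\cap N^f_\Ho(y)|\ge 1$ in the adjacent case. This is where the ambient hypotheses enter. If $x,y$ shared two fat neighbors, then by Lemma~\ref{lm:BH} the principal submatrix of $B(\Ho)$ on $\{x,y\}$ would have diagonal entries $\le -2$ and off-diagonal entry $\le -1$, hence smallest eigenvalue $\le -3$; by the interlacing of Lemma~\ref{lm:001} this forces $\lambda_{\min}(\Ho)\le -3$, which is excluded in the range $\lambda_{\min}(\Ho)\ge -1-\tau>-3$ governing this paper (and is part of the standing hypotheses of \cite{JKMT}). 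Granting this, every cross pair has at most one common fat neighbor, condition (iv) holds in full, and $\{\Ho^1,\Ho^2\}$ is a genuine decomposition, completing ``disconnected $\Rightarrow$ decomposable'' and hence the equivalence.
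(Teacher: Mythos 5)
Your argument is correct, and there is nothing in the paper to compare it against: the lemma is not proved here but cited from \cite[Lemma 3.4]{JKMT}. The ``decomposable $\Rightarrow$ disconnected'' direction is exactly the routine translation of condition (iv) of Definition~\ref{df:002} through your dictionary, and your aside that the parts of a decomposition must have nonempty slim vertex sets is the right thing to record (an empty part would make every Hoffman graph trivially decomposable, so it must be excluded). The valuable part of your write-up is that you have put your finger on the one genuine subtlety in the converse: with this paper's definition of $E^-(\s(\Ho))$, which requires $\{u,v\}\notin E(\Ho)$, a pair of \emph{adjacent} slim vertices sharing two fat neighbors is a non-edge of $\s(\Ho)$, yet condition (iv) is then unsatisfiable across any split separating them; the Hoffman graph with two adjacent slim vertices and two common fat neighbors is indecomposable with disconnected special graph, so the equivalence as literally stated does fail without an ambient restriction. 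Your importation of $\lambda_{\min}(\Ho)>-3$ (via Lemma~\ref{lm:BH} and the interlacing of Lemma~\ref{lm:001}, applied to the induced Hoffman subgraph on $\{x,y\}\cup N^f_\Ho(x)\cup N^f_\Ho(y)$, whose $B$-matrix is the relevant $2\times2$ principal submatrix) is therefore not a convenience but a necessity, and it is harmless here since the lemma is only ever invoked in this paper for Hoffman graphs with $\lambda_{\min}(\Ho)\ge-1-\tau>-3$. In short: the proof is the standard one, and the only caveat is that the statement should be read with the paper's standing eigenvalue hypothesis (or with the special graph defined via the sign of $B(\Ho)_{uv}$, under which the problematic pair becomes a $(-)$-edge and the unrestricted equivalence is restored).
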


\begin{defi}
For an edge-signed graph $\s$, 
we define its \emph{signed adjacency matrix} $M(\s)$ 
by 
\[
(M(\s))_{uv}=
\begin{cases}
1  & \text{if } \{u,v\} \in E^+(\s), \\
-1 & \text{if } \{u,v\} \in E^-(\s), \\
0  & \text{otherwise.} 
\end{cases} 
\]
We denote by $\lambda_{\min}(\s)$ the smallest eigenvalue of $M(\s)$. 
\end{defi}

We remark that P.~J.~Cameron, J.~J.~Seidel, and S.~V.~Tsaranov studied 
the eigenvalues of edge-signed graphs in \cite{CST}. 

\begin{lemma}\label{lm:induced-esg}
If $\s'$ is an induced edge-signed subgraph 
of an edge-signed graph $\s$, 
then $\lambda_{\min}(\s') \geq \lambda_{\min}(\s)$. 
\end{lemma}

\begin{proof}
Since $M(\s')$ is a principal submatrix of $M(\s)$, 
the lemma holds. 
\end{proof}

\begin{lemma}\label{p:002}
Let $\Ho$ be a Hoffman graph in which 
any two distinct slim vertices have at most one common fat neighbor. 
Then 
\[
M(\s(\Ho))=B(\Ho) + D(\Ho), 
\]
where $D(\Ho)$ is the diagonal matrix defined by 
$D(\Ho)_{xx}:=|N^f_{\Ho}(x)|$ for $x \in V^s(\Ho)$.
\end{lemma}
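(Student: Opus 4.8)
The plan is to prove the matrix identity $M(\s(\Ho)) = B(\Ho) + D(\Ho)$ by comparing the two sides entry by entry, using the explicit formulas already established in Lemma~\ref{lm:BH} for the entries of $B(\Ho)$. Since both sides are symmetric matrices indexed by $V^s(\Ho) = V(\s(\Ho))$, it suffices to check agreement on the diagonal and on an arbitrary off-diagonal pair $\{u,v\}$ of distinct slim vertices.

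\textbf{Diagonal entries.} First I would handle the diagonal. By Lemma~\ref{lm:BH}, $B(\Ho)_{xx} = -|N^f_{\Ho}(x)|$, while $D(\Ho)_{xx} = |N^f_{\Ho}(x)|$ by definition. Hence $(B(\Ho)+D(\Ho))_{xx} = 0$. On the other side, the signed adjacency matrix $M(\s(\Ho))$ has zero diagonal, since an edge-signed graph has no loops. So the diagonal entries match trivially.

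\textbf{Off-diagonal entries.} Next I would treat a pair of distinct slim vertices $u,v$. Here $D(\Ho)_{uv}=0$, so the claim reduces to showing $M(\s(\Ho))_{uv} = B(\Ho)_{uv}$. By Lemma~\ref{lm:BH}, $B(\Ho)_{uv} = A_{uv} - |N^f_{\Ho}(u)\cap N^f_{\Ho}(v)|$, where $A_{uv}\in\{0,1\}$ records adjacency in $\Ho$. The key simplification comes from the hypothesis that any two distinct slim vertices share at most one common fat neighbor, so $|N^f_{\Ho}(u)\cap N^f_{\Ho}(v)|\in\{0,1\}$. This leaves exactly four cases according to whether $\{u,v\}\in E(\Ho)$ and whether the intersection is empty, and I would tabulate them against the definition of $\s(\Ho)$: the case $\{u,v\}\in E(\Ho)$ with empty fat-intersection gives $B(\Ho)_{uv}=1$ and a $(+)$-edge, hence $M(\s(\Ho))_{uv}=1$; the case $\{u,v\}\notin E(\Ho)$ with nonempty intersection gives $B(\Ho)_{uv}=-1$ and a $(-)$-edge, hence $M(\s(\Ho))_{uv}=-1$; the two remaining cases (adjacent with a common fat neighbor, or non-adjacent with none) both yield $B(\Ho)_{uv}=0$ and no edge in $\s(\Ho)$, hence $M(\s(\Ho))_{uv}=0$. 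In every case the entries agree.

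This argument is essentially a verification, so there is no serious obstacle; the only thing requiring care is that the definition of $E^+(\s(\Ho))$ and $E^-(\s(\Ho))$ be matched correctly against the sign conventions of $M(\s)$, and that the hypothesis on common fat neighbors is genuinely needed to rule out $|N^f_{\Ho}(u)\cap N^f_{\Ho}(v)|\geq 2$, which would otherwise produce an off-diagonal entry of $B(\Ho)$ outside $\{-1,0,1\}$ that no $\pm 1$ entry of $M(\s(\Ho))$ could match.
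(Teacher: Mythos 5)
Your proof is correct and follows exactly the route the paper intends: its proof of this lemma is the one-line remark that the identity ``follows immediately from the definitions and Lemma~\ref{lm:BH},'' and your entry-by-entry verification (cancellation on the diagonal, the four off-diagonal cases) is precisely the computation being left to the reader. No issues.
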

\begin{proof}
This follows immediately from the definitions and Lemma~\ref{lm:BH}. 
\end{proof}

\begin{lemma}\label{cor:MS-B}
If $\Ho$ is a fat Hoffman graph 
with smallest eigenvalue greater than $-3$, 
then 
$\lambda_{\min} (\s(\Ho)) \geq \lambda_{\min} (\Ho) +1$. 
\end{lemma}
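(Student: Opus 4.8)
The plan is to reduce the statement to Lemma~\ref{p:002} together with a comparison of symmetric matrices. First I would verify that the hypothesis $\lambda_{\min}(\Ho) > -3$ forces every pair of distinct slim vertices to have at most one common fat neighbor, which is precisely the condition under which Lemma~\ref{p:002} applies. To see this, suppose for contradiction that two distinct slim vertices $x,y$ had two common fat neighbors $f_1,f_2$, and consider the Hoffman subgraph $\Ho'$ induced on $\{x,y,f_1,f_2\}$. Using Lemma~\ref{lm:BH}, the restriction of $B(\Ho')$ to the slim vertices $\{x,y\}$ is $\left(\begin{smallmatrix} -2 & A_{xy}-2 \\ A_{xy}-2 & -2 \end{smallmatrix}\right)$, whose eigenvalues are $-2\pm|A_{xy}-2|$; thus $\lambda_{\min}(\Ho')=-3$ when $x\sim y$ and $\lambda_{\min}(\Ho')=-4$ when $x\not\sim y$, so in either case $\lambda_{\min}(\Ho')\le -3$. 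By Lemma~\ref{lm:001} this yields $\lambda_{\min}(\Ho)\le\lambda_{\min}(\Ho')\le -3$, contradicting the hypothesis.

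Once the common-neighbor condition is established, Lemma~\ref{p:002} gives $M(\s(\Ho)) = B(\Ho)+D(\Ho)$, where $D(\Ho)$ is the diagonal matrix with $D(\Ho)_{xx}=|N^f_{\Ho}(x)|$. Since $\Ho$ is fat, every slim vertex has at least one fat neighbor, so $D(\Ho)_{xx}\ge 1$ for all $x\in V^s(\Ho)$; equivalently $D(\Ho)-I$ is a nonnegative diagonal matrix, hence positive semidefinite. Therefore $M(\s(\Ho)) - \bigl(B(\Ho)+I\bigr) = D(\Ho)-I \succeq 0$.

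Finally I would invoke monotonicity of the smallest eigenvalue under the positive semidefinite order: for any unit vector $v$ we have $v^{\top}M(\s(\Ho))v \ge v^{\top}\bigl(B(\Ho)+I\bigr)v \ge \lambda_{\min}(B(\Ho))+1$, and taking the minimum over $v$ gives $\lambda_{\min}(\s(\Ho)) \ge \lambda_{\min}(\Ho)+1$, as desired. The only genuinely delicate point is the first step, the verification that the eigenvalue bound rules out a pair of common fat neighbors; everything after that is a routine matrix comparison, so I expect that case analysis to be the main obstacle.
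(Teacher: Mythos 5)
Your proof is correct and follows essentially the same route as the paper: rule out a common pair of fat neighbors via Lemma~\ref{lm:001}, apply Lemma~\ref{p:002} to write $M(\s(\Ho))=B(\Ho)+D(\Ho)$, and use eigenvalue monotonicity (the paper cites Weyl's inequality from Horn--Johnson where you argue directly with Rayleigh quotients). You have merely made explicit the $2\times 2$ computation that the paper leaves as an assertion.
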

\begin{proof}
If some two distinct slim vertices of $\Ho$ have two common fat
neighbors, then $\Ho$ contains an induced subgraph with smallest
eigenvalue at most $-3$. This contradicts the assumption by
Lemma~\ref{lm:001}. Thus the hypothesis of Lemma~\ref{p:002}
is satisfied. Since $\Ho$ is fat, the smallest eigenvalue
of $M(\s(\Ho))=B(\Ho)+D(\Ho)$ is at least $\lambda_{\min}(\Ho)+1$
by \cite[Corollary 4.3.3]{HJ}, proving the desired inequality.
\end{proof}

\section{Main Results}
\subsection{The edge-signed graphs with smallest eigenvalue at least $-\tau$}

\begin{defi}\label{df:Qpqr}
Let $p$, $q$, and $r$ be non-neg\-a\-tive integers with $ p+q\le r $.
Let $V_p$, $V_q$, and $V_r$ be mutually disjoint sets such that $|V_i|=i$ 
where $i \in \{p,q,r\}$. 
Let $U_p $ and $U_q $ be subsets of $V_r$ 
satisfying $|U_p|=p$, $|U_q|=q$, and $U_p \cap U_q=\emptyset$.
Let $\mathcal{Q}_{p,q,r}$ be the edge-signed graph defined by 
\begin{eqnarray*}
V(\mathcal{Q}_{p,q,r}) &:=& V_p \cup V_q \cup V_r, \\
E^+(\mathcal{Q}_{p,q,r}) &:=& \{\{u,v\} \mid u \in U_p, v \in V_p \}
\cup 
\{\{v,v'\} \mid v,v' \in V_r, v \neq v' \}, \\
E^-(\mathcal{Q}_{p,q,r}) &:=& \{\{u,v\} \mid u \in U_q, v \in V_q \} 
\end{eqnarray*}
(see Figure \ref{fig:001} for an illustration). 
\end{defi}

\begin{figure}[!h]
\begin{center}
\includegraphics[scale=0.2]{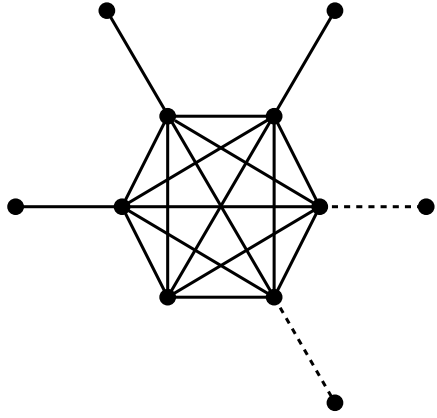}
\end{center}
\caption{$\mathcal{Q}_{3,2,6}$}
\label{fig:001}
\end{figure}

\begin{lemma}\label{lm:MQpqr}
For any non-neg\-a\-tive integers $p$, $q$, and $r$ with $p+q \leq r$, 
$\lambda_{\min}(\mathcal{Q}_{p,q,r}) 
\geq -\tau$.
\end{lemma}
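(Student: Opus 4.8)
The plan is to prove the equivalent assertion that $M(\mathcal{Q}_{p,q,r})+\tau I$ is positive semidefinite, i.e.\ that $z^\top M(\mathcal{Q}_{p,q,r})z+\tau\,z^\top z\ge 0$ for every real vector $z$ indexed by $V(\mathcal{Q}_{p,q,r})$. To exploit the structure I label the vertices of $V_r$ as $w_1,\dots,w_r$ so that $U_p=\{w_1,\dots,w_p\}$ and $U_q=\{w_{p+1},\dots,w_{p+q}\}$; I write $x_i$ for the coordinate at the vertex of $V_p$ joined to $w_i$, and $y_j$ for the coordinate at the vertex of $V_q$ joined to $w_{p+j}$, keeping $w_k$ for the coordinate at the $k$-th vertex of $V_r$. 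Reading the $(+)$-edges (the $V_p$--$U_p$ edges together with the clique on $V_r$) and the $(-)$-edges (the $V_q$--$U_q$ edges) off Definition~\ref{df:Qpqr}, the quadratic form becomes
\[
z^\top M(\mathcal{Q}_{p,q,r})z + \tau\,z^\top z
= 2\sum_{i=1}^p x_i w_i - 2\sum_{j=1}^q y_j w_{p+j}
+ 2\!\!\sum_{1\le k<l\le r}\!\! w_k w_l
+ \tau\Big(\sum_{i=1}^p x_i^2 + \sum_{j=1}^q y_j^2 + \sum_{k=1}^r w_k^2\Big).
\]

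The key observation is that the coordinates $x_i$ and $y_j$ are \emph{pendant}: each occurs in exactly one off-diagonal term. Hence I would first minimize over them by completing the square, using the defining identity $\tau^{-1}=\tau-1$ of the golden ratio:
\[
\tau x_i^2 + 2 x_i w_i = \tau\big(x_i + \tau^{-1} w_i\big)^2 - (\tau-1) w_i^2 \ge -(\tau-1) w_i^2,
\]
and symmetrically $\tau y_j^2 - 2 y_j w_{p+j} \ge -(\tau-1) w_{p+j}^2$. Substituting these bounds removes $x$ and $y$ entirely and leaves a quadratic form in the $V_r$-coordinates alone, in which, crucially, each vertex of $U_p\cup U_q$ acquires squared-coefficient $\tau-(\tau-1)=1$ while each of the remaining $r-p-q$ vertices of $V_r$ keeps coefficient $\tau$.

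It then remains to show that this reduced form is nonnegative. Applying $2\sum_{k<l}w_k w_l=\big(\sum_k w_k\big)^2-\sum_k w_k^2$, the coefficient $1$ on each vertex of $U_p\cup U_q$ cancels against the $-1$ coming from $-\sum_k w_k^2$, and the reduced form collapses to
\[
\Big(\sum_{k=1}^r w_k\Big)^2 + (\tau-1)\sum_{k=p+q+1}^r w_k^2,
\]
which is nonnegative because $\tau>1$. This proves $\lambda_{\min}(\mathcal{Q}_{p,q,r})\ge-\tau$. The hypothesis $p+q\le r$ enters exactly here: it is what allows $U_p$ and $U_q$ to be taken as disjoint subsets of $V_r$, so that both the cancellation of coefficients on $U_p\cup U_q$ and the nonnegative leftover sum over $V_r\setminus(U_p\cup U_q)$ are legitimate.

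I expect no analytic difficulty; the single delicate point is the coefficient bookkeeping after the substitution, which succeeds only because the golden-ratio identity $\tau-1=\tau^{-1}$ makes the contribution of each matched vertex of $V_r$ balance exactly against the clique term $-\sum_k w_k^2$. Finally, equality holds precisely when $x_i=-\tau^{-1}w_i$, $y_j=\tau^{-1}w_{p+j}$, every leftover $w_k$ ($k>p+q$) vanishes, and $\sum_k w_k=0$; such a nonzero $z$ exists whenever $p+q\ge 2$, so the bound $-\tau$ is attained and is therefore sharp.
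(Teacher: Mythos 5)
Your proof is correct, and it takes a genuinely different route from the paper's. The paper proves the lemma by computing the characteristic polynomial of the extremal graph $\mathcal{Q}_{r,r,2r}$ explicitly---block row operations give $|xI-M(\mathcal{Q}_{r,r,2r})|=(x^2+x-1)^{2r-1}(x^2-(2r-1)x-1)$, whence $\lambda_{\min}(\mathcal{Q}_{r,r,2r})=-\tau$---and then invokes interlacing (Lemma~\ref{lm:induced-esg}), since $\mathcal{Q}_{p,q,r}$ with $p+q\le r$ embeds as an induced edge-signed subgraph of $\mathcal{Q}_{r,r,2r}$. You instead show directly that $M(\mathcal{Q}_{p,q,r})+\tau I$ is positive semidefinite by eliminating the pendant coordinates on $V_p\cup V_q$ through completion of the square (using $\tau^{-1}=\tau-1$) and then collapsing the clique term on $V_r$; the bookkeeping checks out, and the reduced form $\bigl(\sum_k w_k\bigr)^2+(\tau-1)\sum_{k>p+q}w_k^2$ is right. (Note that you read the $V_p$--$U_p$ and $V_q$--$U_q$ edges as perfect matchings; this agrees with the matrix the paper writes down for $M(\mathcal{Q}_{r,r,2r})$, even though the set-builder notation in Definition~\ref{df:Qpqr} taken literally would describe complete bipartite graphs.) Your argument is more elementary---no determinant identity and no auxiliary supergraph---it works for all admissible $p,q,r$ at once, it makes visible exactly where the hypothesis $p+q\le r$ and the golden-ratio identity enter, and it identifies the kernel of $M(\mathcal{Q}_{p,q,r})+\tau I$, giving sharpness of the bound whenever $p+q\ge 2$. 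What the paper's computation buys in exchange is the full spectrum of $\mathcal{Q}_{r,r,2r}$ rather than only a lower bound on its smallest eigenvalue.
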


\begin{proof}
Let
\[
M(\mathcal{Q}_{r,r,2r})=
\begin{bmatrix}
0&0&I&0\\
0&0&0&-I\\
I&0&J-I&J\\
0&-I&J&J-I
\end{bmatrix}.
\]
Multiplying 
\[
\begin{bmatrix}
I&0&xI&0\\
0&I&0&-xI\\
0&0&I&0\\
0&0&0&I
\end{bmatrix}
\]
from the left to $xI-M(\mathcal{Q}_{r,r,2r})$, we find
\begin{align*}
|xI-M(\mathcal{Q}_{r,r,2r})|&=
(-1)^r
\begin{vmatrix}
(x^2+x-1)I&-(x^2+x-1)I\\
xJ&-(x^2+x-1)I+xJ
\end{vmatrix}
\\&=
(x^2+x-1)^r
\begin{vmatrix}
I&I\\
xJ&(x^2+x-1)I-xJ
\end{vmatrix}
\\&=
(x^2+x-1)^r
|(x^2+x-1)I-2xJ|
\\&=
(x^2+x-1)^{2r-1}
(x^2-(2r-1)x-1).
\end{align*}
In particular, we obtain 
$\lambda_{\min}(\mathcal{Q}_{r,r,2r}) = -\tau$.
Since $p \leq r$ and $q \leq r$, 
$\mathcal{Q}_{r,r,2r}$ has an induced 
edge-signed subgraph 
isomorphic to $\mathcal{Q}_{p,q,r}$. 
By Lemma \ref{lm:induced-esg}, 
$\lambda_{\min}(\mathcal{Q}_{p,q,r}) 
\geq \lambda_{\min}(\mathcal{Q}_{r,r,2r})
=-\tau$.
\end{proof}

\begin{exa} \label{ex:MS}
Let $\mathcal{T}_{1}$ and $\mathcal{T}_{2}$ 
be the edge-signed triangles having exactly one $(+)$-edge 
and exactly two $(+)$-edges, respectively, i.e., 
$V(\mathcal{T}_{1}) = V(\mathcal{T}_{2}) = \{v_1,v_2,v_3\}$, 
$E^+(\mathcal{T}_{1}) = E^-(\mathcal{T}_{2}) = \{\{v_1,v_2\}\}$, and 
$E^-(\mathcal{T}_{1}) = E^+(\mathcal{T}_{2}) 
= \{\{v_1,v_3\},$ $\{v_2,v_3\} \}$ 
(see Figure \ref{fig:S30}).

For $\epsilon_1, \epsilon_2, \epsilon_3 \in\{1,-1\}$ 
and $\delta\in\{0,\pm1\}$, 
let 
$\s_1(\epsilon_1,\epsilon_2,\epsilon_3)$,
$\s_2(\epsilon_1,\epsilon_2,\delta)$,
$\s_3(\epsilon_1,\epsilon_2,\delta)$, and 
$\s_4(\epsilon_1,\epsilon_2)$ 
be the edge-signed graphs in Figure \ref{fig:S1-5}, 
where an edge with label $1$ (resp.\ $-1$) 
represents a $(+)$-edge (resp.\ a $(-)$-edge) 
and an edge with label $0$ represents 
a non-adjacent pair. 

It can be checked that 
each of the edge-signed graphs 
$\mathcal{T}_2$, 
$\s_1(\epsilon_1,\epsilon_2,\epsilon_3)$,
$\s_2(\epsilon_1,\epsilon_2,\delta)$,
$\s_3(\epsilon_1,\epsilon_2,\delta)$, 
$\s_4(\epsilon_1,\epsilon_2)$ 
has the smallest eigenvalue less than $-\tau$. 
\end{exa}

\begin{figure}[!h]
\begin{center}
\begin{tabular}{ccc}
\includegraphics[scale=0.8]{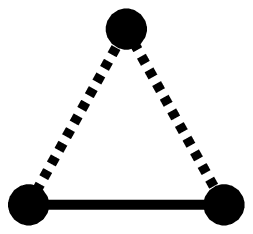} & &
\includegraphics[scale=0.8]{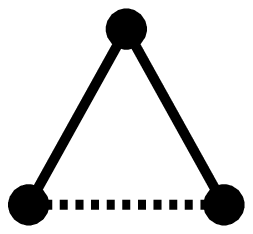} \\
$\mathcal{T}_1$ & & $\mathcal{T}_2$ \\
\end{tabular}
\end{center}
\caption{The edge-signed triangles $\mathcal{T}_{1}$
and $\mathcal{T}_{2}$}
\label{fig:S30}
\end{figure}

\begin{figure}[!h]
\psfrag{e1}{$\epsilon_1$}
\psfrag{e2}{$\epsilon_2$}
\psfrag{e3}{$\epsilon_3$}
\psfrag{d}{$\delta$}
\begin{center}
\begin{tabular}{cc}
\includegraphics[scale=0.8]{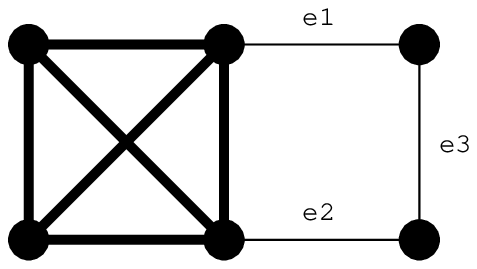} &
\includegraphics[scale=0.8]{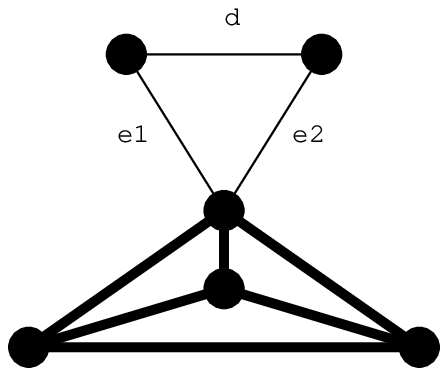} \\
$\s_1(\epsilon_1,\epsilon_2,\epsilon_3)$ & 
$\s_2(\epsilon_1,\epsilon_2,\delta)$ \\
& \\
& \\
\includegraphics[scale=0.8]{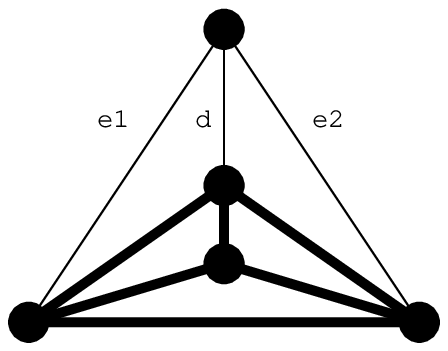} &
\includegraphics[scale=0.8]{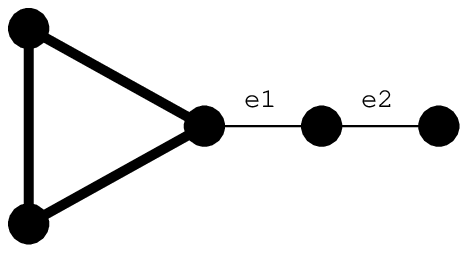} \\
$\s_3(\epsilon_1,\epsilon_2,\delta)$ &
$\s_4(\epsilon_1,\epsilon_2)$ \\
\end{tabular}
\end{center}
\caption{Edge-signed graphs with smallest eigenvalue less than $-\tau$}
\label{fig:S1-5}
\end{figure}

\begin{figure}[!h]
\begin{center}
\begin{tabular}{ccccc}
    \includegraphics[scale=0.6]{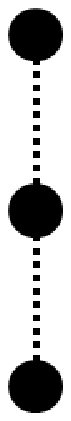}&
    \includegraphics[scale=0.6]{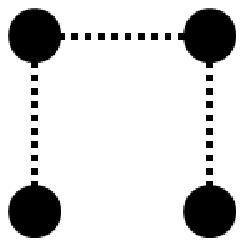}&
    \includegraphics[scale=0.6]{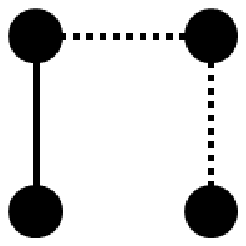}&
    \includegraphics[scale=0.6]{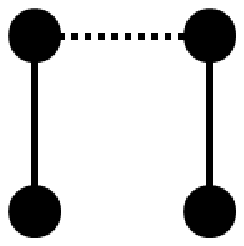}&
    \includegraphics[scale=0.6]{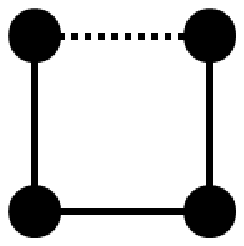}\\
    $\s_{3,1}$&$\s_{4,1}$&$\s_{4,2}$&$\s_{4,3}$&$\s_{4,4}$\\
    &&&&\\
    &&&&\\
    \includegraphics[scale=0.6]{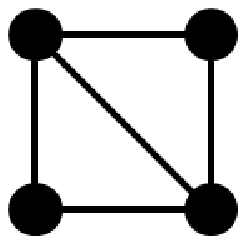}&
    \includegraphics[scale=0.6]{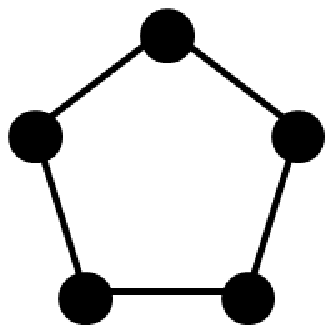}&
    \includegraphics[scale=0.6]{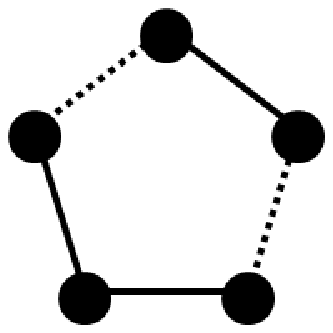}&
    \includegraphics[scale=0.6]{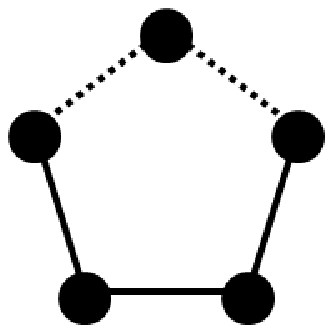}&
    \includegraphics[scale=0.6]{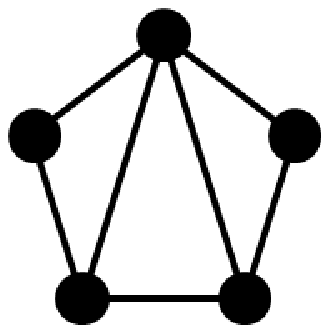}\\
    $\s_{4,5}$&$\s_{5,1}$&$\s_{5,2}$&$\s_{5,3}$&$\s_{5,4}$\\
    &&&&\\
    &&&&\\
    \includegraphics[scale=0.6]{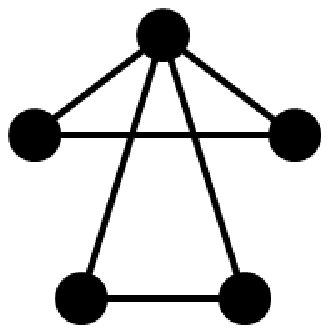}&
    \includegraphics[scale=0.6]{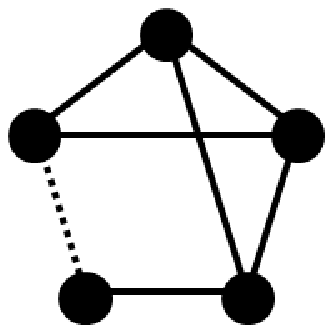}&
    \includegraphics[scale=0.6]{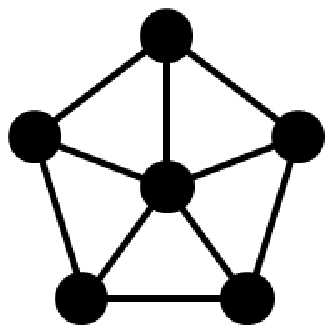}&
    \includegraphics[scale=0.6]{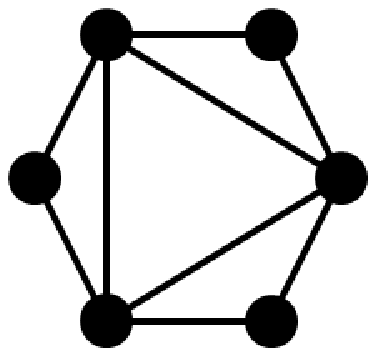}&
    \includegraphics[scale=0.6]{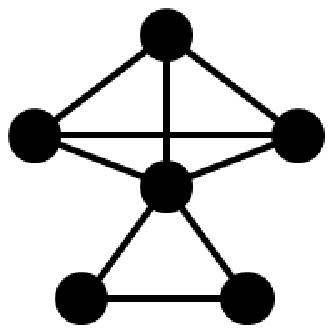}\\
    $\s_{5,5}$&$\s_{5,6}$&$\s_{6,1}$&$\s_{6,2}$&$\s_{6,3}$\\
\end{tabular}
\end{center}
\caption{The $15$ connected edge-signed graphs
with smallest eigenvalue at least 
$-\tau$ not containing $\mathcal{T}_1$, 
other than $\mathcal{Q}_{p,q,r}$
where $p,q,r$ are some non-neg\-a\-tive integers with 
$p+q\leq r$ 
}
\label{fig:004}
\end{figure}

\begin{theorem}\label{p:005++}
Let $\s$ be a connected edge-signed graph with
$\lambda_{\min}(\s)\geq 
-\tau$.
Assume that $\s$ does not contain 
an induced edge-signed subgraph isomorphic to $\mathcal{T}_1$. 
Then 
either $\s$ is isomorphic to $\mathcal{Q}_{p,q,r}$ 
for some non-neg\-a\-tive integers $p,q,r$ with 
$p+q\leq r$, or
$\s$ has at most $6$ vertices
and is isomorphic to one of the $15$ edge-signed graphs 
in Figure \ref{fig:004}. 
\end{theorem}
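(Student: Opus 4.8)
The plan is to convert the spectral hypothesis into a list of forbidden induced subgraphs and then carry out a structural analysis of ``generic family plus finite exceptional list'' type. By Lemma~\ref{lm:induced-esg} together with Example~\ref{ex:MS}, the assumption $\lambda_{\min}(\s)\ge-\tau$ prevents $\s$ from containing, as an induced edge-signed subgraph, any copy of $\mathcal{T}_2$, $\s_1(\epsilon_1,\epsilon_2,\epsilon_3)$, $\s_2(\epsilon_1,\epsilon_2,\delta)$, $\s_3(\epsilon_1,\epsilon_2,\delta)$, or $\s_4(\epsilon_1,\epsilon_2)$, for every admissible choice of the signs; and by hypothesis $\s$ contains no induced $\mathcal{T}_1$. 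The first thing I would extract is a classification of the triangles of $\s$: since $\mathcal{T}_1$ and $\mathcal{T}_2$ are excluded and the all-$(-)$ triangle has signed adjacency matrix $I-J$ with smallest eigenvalue $-2<-\tau$, every triangle of $\s$ must have all three of its edges positive.

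From this triangle rule I would read off the local behaviour of the negative edges. The two endpoints of a $(-)$-edge can have no common neighbour, since such a neighbour would lie in a triangle carrying a negative edge; and no vertex can be incident with three $(-)$-edges, as their endpoints would then be mutually non-adjacent and induce a $(-)$-signed star $K_{1,3}$ with smallest eigenvalue $-\sqrt3<-\tau$. Hence the negative edges form a very sparse subgraph. The four families $\s_1,\dots,\s_4$ provide the remaining, finer constraints: fixing one vertex, they forbid most of the ways in which a new vertex can be attached to an already-present edge or short path, which will let me force attachment neighbourhoods to be ``complete to a common block.''

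The core of the argument is a dichotomy obtained by fixing a maximum positive clique $R$, which will play the role of $V_r$ in Definition~\ref{df:Qpqr}. Using the triangle rule, every vertex outside $R$ is joined to $R$ either by positive edges to a proper subset of $R$ or by a single negative edge; the forbidden configurations $\s_1,\dots,\s_4$ then force all the positively attached vertices to share one common subset $U_p$ and constrain the negatively attached vertices in the same way, so that $U_p\cap U_q=\emptyset$ and $\s\cong\mathcal{Q}_{p,q,r}$ with $p+q\le r$. I would prove that this is exactly what happens once $\s$ is large enough. Every remaining $\s$ fails to assemble into a single clique with fans attached in this manner, for instance because it has a vertex non-adjacent to $R$, or is built from several maximal positive cliques linked by negative edges, or contains one of a few specific small patterns; for such $\s$ I would establish that $|V(\s)|\le6$ and finish by enumerating all connected edge-signed graphs on at most six vertices that avoid every forbidden subgraph, obtaining precisely the $15$ graphs of Figure~\ref{fig:004}.

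The main obstacle is the bookkeeping of this finite part: proving the size bound, namely that any $\s$ not isomorphic to some $\mathcal{Q}_{p,q,r}$ has at most six vertices, and verifying that the enumeration is exhaustive and yields exactly those $15$ graphs and no others. This amounts to ruling out, for every partial configuration, each extension not already accounted for by the $\mathcal{Q}$-family, and is the step most naturally handled by a systematic, possibly computer-assisted, search organized around the forbidden subgraphs of Example~\ref{ex:MS}. (The statement is the forward implication only; that the $\mathcal{Q}_{p,q,r}$ meet the bound is already Lemma~\ref{lm:MQpqr}, and the same determinant-type eigenvalue computations conveniently double-check the fifteen exceptional graphs.)
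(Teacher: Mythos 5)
Your list of forbidden configurations and the local consequences you draw from them (all triangles are all-positive, endpoints of a $(-)$-edge have no common neighbour, no vertex meets three $(-)$-edges) are correct and are indeed the same ingredients the paper uses. But your overall route differs from the paper's, and as written it has a genuine gap: you never actually establish the size bound. Your plan is to show that every sufficiently large $\s$ organizes itself around a maximum positive clique $R$ into a $\mathcal{Q}_{p,q,r}$, and to handle everything else by ``enumerating all connected edge-signed graphs on at most six vertices.'' That enumeration cannot, by itself, rule out exceptional graphs on $7$ or more vertices; you need a prior argument that none exist, and that is precisely the step you defer (``I would establish that $|V(\s)|\le 6$''). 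Moreover, the maximum-clique dichotomy is delicate exactly where the bound is hardest: if the maximum positive clique is small (size $1$, $2$ or $3$), the forbidden configurations $\s_1,\dots,\s_4$ do not immediately force the fan structure (for instance, an all-negative path is triangle-free with all degrees at most $2$ and must be excluded by a separate eigenvalue computation --- the all-negative $P_4$ attains $-\tau$ and is one of the exceptional graphs, while $P_5$ does not), and you would also have to prove that vertices outside $R$ are pairwise non-adjacent and attach to pairwise distinct vertices of $R$, none of which is argued. There is also a small misdescription of the target: in $\mathcal{Q}_{p,q,r}$ the vertices of $V_p$ do not all attach to a common set $U_p$; the attachment is a matching, one distinct clique vertex per pendant (this is the identity block in the matrix of Lemma~\ref{lm:MQpqr}), and your structural argument needs to produce exactly that.

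The paper closes the gap you leave open by a different mechanism: a computer verification of the full statement for $|V(\s)|\le 7$, followed by induction on the number of vertices. For $|V(\s)|=n+1\ge 8$ one deletes a non-cut vertex $v$; the remaining connected graph has $n\ge 7$ vertices, so by induction it must be some $\mathcal{Q}_{p,q,r}$ with $r\ge 4$, and then a short case analysis with the forbidden subgraphs of Example~\ref{ex:MS} shows $v$ is adjacent to exactly one or to all vertices of $V_r$ and to nothing else, so $\s$ is again a $\mathcal{Q}$. This simultaneously proves the bound of $6$ for the exceptional graphs and avoids the small-clique cases entirely. If you want to keep your direct structural approach, the missing piece is a proof that any $\s$ on at least $7$ vertices satisfying the hypotheses contains a positive clique large enough (say of size $4$) for your fan analysis to apply, together with the non-adjacency and distinct-attachment claims above; without that, the argument does not go through.
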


\begin{proof}
By using computer \cite{MAGMA}, we 
checked 
that 
the theorem holds when $|V(\s)| \leq 7$. 
We prove the assertion 
by induction on $|V(\s)|$. 
Assume that the assertion 
holds for $|V(\s)|=n$ $(\geq 7)$. 
Suppose that $|V(\s)|=n+1$. 
It follows from Problem 6(a) in Section~6 of \cite{exercises}
that there exists a vertex $v$ 
which is not a cut vertex of $\s$.
Then $\s-v$ is connected, where $\s-v$ is the edge-signed subgraph
induced by $V(\s)\setminus\{v\}$.
Since
$\lambda_{\min}(\s-v) \geq \lambda_{\min}(\s) 
\geq 
-\tau$,
the inductive hypothesis implies that
$\s-v$ is isomorphic to $\mathcal{Q}_{p,q,r}$ 
for some $p,q,r$ with $p+q+r=n$. 
Thus $\s$ is the edge-signed graph 
obtained from $\mathcal{Q}_{p,q,r}$ 
by adding the vertex $v$ and signed edges 
between $v$ and some vertices in $\mathcal{Q}_{p,q,r}$. 
Note that $r \geq 4$ since $n=p+q+r \geq 7$ 
and $p+q \leq r$. 

We claim that either $v$ is adjacent to 
only one vertex of $V_r$, or to all the vertices of $V_r$.
Note that $\s$ cannot contain any of 
the edge-signed graphs 
$\mathcal{T}_2$, 
$\s_1(\epsilon_1,\epsilon_2,\epsilon_3)$,
$\s_2(\epsilon_1,\epsilon_2,\delta)$,
$\s_3(\epsilon_1,\epsilon_2,\delta)$, 
$\s_4(\epsilon_1,\epsilon_2)$ 
in Example~\ref{ex:MS}.
If $v$ is adjacent to none of the vertices of $V_r$, then
$\s$ contains $\s_4(\epsilon_1,\epsilon_2)$ 
as an induced edge-signed subgraph,
a contradiction.
If the number of neighbors of $v$ in $V_r$ is at least $2$ and
less than $r$, then
$\s$ contains $\s_3(\epsilon_1,\epsilon_2,\delta)$ 
as an induced edge-signed subgraph,
a contradiction. 
Thus the claim holds.

Now, if
$v$ is adjacent to only one vertex of $V_r$, then 
the unique neighbor of $v$ in $V_r$ is in $V_r \setminus (U_p\cup U_q)$.
Indeed, otherwise we would 
find $\s_2(\epsilon_1,\epsilon_2,\delta)$ 
as an induced edge-signed subgraph,
a contradiction.
Also, 
$v$ is adjacent to none of the vertices of $V_p\cup V_q$
since otherwise we would 
find $\s_1(\epsilon_1,\epsilon_2,\epsilon_3)$ 
as an induced edge-signed subgraph, 
a contradiction.
Thus $\s$ is isomorphic to $\mathcal{Q}_{p+1,q,r}$ or
$\mathcal{Q}_{p,q+1,r}$.

Suppose that $v$ is adjacent to all the vertices of $V_r$.
Since $V_r$ is a clique consisting of $(+)$-edges only,
the assumption implies that $v$ is incident with at most one
$(-)$-edge to $V_r$. If there is a vertex of $V_r$ joined to
$v$ by a $(-)$-edge, then we find
$\mathcal{T}_2$ 
as an induced edge-signed subgraph, 
a contradiction.
Thus all the edges from $v$ to $V_r$ are $(+)$-edges.
Now $v$ is adjacent to none of the vertices of $V_p\cup V_q$
since otherwise we would 
find $\s_3(\epsilon_1,\epsilon_2,0)$ 
as an induced edge-signed subgraph, 
a contradiction.
Thus $\s$ is isomorphic to $\mathcal{Q}_{p,q,r+1}$. 
Hence the theorem holds. 
\end{proof}

\begin{lemma}
The smallest eigenvalues of the signed adjacency matrices of 
the edge-signed graphs in Figure \ref{fig:004} 
are given as follows: 
\[
\lambda_{\min}(\s) = 
\left\{
\begin{array}{cll}
-\sqrt{2} 
& \approx -1.414213
& \quad \text{if } \s \in \{\s_{3,1}, \s_{4,4} \}, \\
\frac{1 - \sqrt{17}}{2} 
& \approx -1.561553
& \quad \text{if } \s \in \{\s_{4,5}, \s_{5,5}, \s_{5,6} \}, \\
1 + t 
& \approx -1.601679
& \quad \text{if } \s=\s_{6,3}, \\
-\tau 
& \approx -1.618034 
& \quad \text{otherwise}, \\
\end{array}
\right.
\]
where 
$t$ is the smallest zero of the polynomial 
$x^3 - 6x + 2$. 
\end{lemma}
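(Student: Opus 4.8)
The plan is to compute, for each of the fifteen edge-signed graphs in Figure~\ref{fig:004}, the characteristic polynomial of its signed adjacency matrix and to locate the least root. Since each graph has at most $6$ vertices, each $M(\s)$ is a matrix of order at most $6$ whose entries are read directly off Figure~\ref{fig:004} (a full line contributing $+1$, a dashed line $-1$, a non-adjacent pair $0$), so every characteristic polynomial $\phi_\s(x):=\det(xI-M(\s))$ is obtainable by a finite, elementary determinant computation.

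First I would record the four candidate minimal polynomials of the asserted eigenvalues: $-\sqrt2$ is the least root of $x^2-2$; the value $\frac{1-\sqrt{17}}{2}$ is the least root of $x^2-x-4$; $-\tau$ is the least root of $x^2+x-1$ (since $\tau^2-\tau-1=0$); and, writing $\lambda=1+t$ with $t^3-6t+2=0$, the substitution $t=\lambda-1$ shows that $1+t$ is the least root of the cubic
\[
\lambda^3-3\lambda^2-3\lambda+7 .
\]
The problem then reduces to checking, for each $\s$, that the appropriate one of these polynomials divides $\phi_\s(x)$ and that its least root is in fact the least root of $\phi_\s$.

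For the graphs in the ``otherwise'' case I would proceed most economically. By Theorem~\ref{p:005++} we already know $\lambda_{\min}(\s)\ge-\tau$ for all fifteen graphs, so it suffices to exhibit $-\tau$ as an eigenvalue: this amounts to verifying $(x^2+x-1)\mid\phi_\s(x)$, or equivalently to producing an explicit eigenvector of $M(\s)$ with eigenvalue $-\tau$, which together with the lower bound forces $\lambda_{\min}(\s)=-\tau$. For the remaining six graphs---$\s_{3,1},\s_{4,4}$ (value $-\sqrt2$), $\s_{4,5},\s_{5,5},\s_{5,6}$ (value $\frac{1-\sqrt{17}}{2}$), and $\s_{6,3}$ (value $1+t$)---one must instead compute $\phi_\s$ in full, factor it over $\mathbb{Q}$, and confirm that its smallest real root equals the claimed value; in each of these cases the value lies strictly above $-\tau$, consistent with the general bound.

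The content here is computational rather than conceptual, so the only genuine obstacle is bookkeeping: reading the fifteen sign patterns off Figure~\ref{fig:004} without error and carrying out the determinant expansions (together with the factorizations for $\s_{6,3}$, whose characteristic polynomial is a sextic that must be split into two cubics) with exact arithmetic. Because the matrices are small and the target values are prescribed, I expect this to be discharged most reliably by a computer algebra computation, exactly as in the proof of Theorem~\ref{p:005++}, with the hand verification of divisibility by $x^2+x-1$ serving as an independent check in the dominant $-\tau$ case.
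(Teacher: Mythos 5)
Your proposal is correct and takes essentially the same route as the paper, whose entire proof is ``This can be checked by a direct calculation''; you simply spell out that calculation (characteristic polynomials, the minimal polynomials $x^2-2$, $x^2-x-4$, $x^2+x-1$, $\lambda^3-3\lambda^2-3\lambda+7$, and the divisibility checks). The one small imprecision is that the lower bound $\lambda_{\min}(\s)\ge-\tau$ for the ``otherwise'' graphs does not follow from the implication stated in Theorem~\ref{p:005++} but rather from the computer verification underlying the caption of Figure~\ref{fig:004}; this does not affect the correctness of the argument.
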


\begin{proof}
This can be checked by a direct calculation. 
\end{proof}

\begin{remark}
Among edge-signed graphs in Figure \ref{fig:004}, 
the maximal ones with respect to taking 
induced edge-signed graphs 
are $\s_{4,1}$, $\s_{5,2}$, $\s_{5,3}$, $\s_{5,6}$, 
$\s_{6,1}$, $\s_{6,2}$, $\s_{6,3}$. 
\end{remark}

\subsection{The special graphs of fat $(-1-\tau)$-irreducible Hoffman graphs}

\begin{lemma}\label{lm:005}
Let $\Ho$ be a Hoffman graph with smallest eigenvalue at least 
$-1-\tau$. 
Then every slim vertex of $\Ho$ has at most two fat neighbors. 
\end{lemma}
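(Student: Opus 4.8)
The plan is to argue by contradiction, reducing to the explicit eigenvalue computation already recorded for the star-like Hoffman graphs $\mathfrak{K}_{1,t}$. Suppose, contrary to the claim, that some slim vertex $x$ of $\Ho$ has at least three fat neighbors. I would pick three of them, say $f_1, f_2, f_3 \in N^f_{\Ho}(x)$, and consider the Hoffman subgraph $\Ho'$ of $\Ho$ induced by $\{x, f_1, f_2, f_3\}$.

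The first key step is to identify $\Ho'$ precisely. Since fat vertices are pairwise non-adjacent by condition (ii) of Definition~\ref{df:Hg}, the only edges among $\{x,f_1,f_2,f_3\}$ are the three edges $\{x,f_i\}$. Hence $\Ho'$ is isomorphic to $\mathfrak{K}_{1,3}$, the Hoffman graph with a single slim vertex adjacent to three mutually non-adjacent fat vertices, as defined in Example~\ref{rem:00}.

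The second step invokes the known spectrum: by Example~\ref{rem:00} we have $\lambda_{\min}(\mathfrak{K}_{1,3}) = -3$, so $\lambda_{\min}(\Ho') = -3$. On the other hand, $\Ho'$ is an induced Hoffman subgraph of $\Ho$, so Lemma~\ref{lm:001} gives $\lambda_{\min}(\Ho') \geq \lambda_{\min}(\Ho) \geq -1-\tau$. Combining these yields $-3 \geq -1-\tau$, i.e.\ $\tau \geq 2$, which contradicts $\tau = \tfrac{1+\sqrt5}{2} < 2$. Therefore no slim vertex can have three fat neighbors, and every slim vertex has at most two.

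I do not anticipate a genuine obstacle here; the argument is a short monotonicity-of-eigenvalues contradiction. The only point requiring a moment of care is the identification of the induced subgraph as $\mathfrak{K}_{1,3}$, where one must explicitly appeal to the non-adjacency of fat vertices so that the eigenvalue value $-3$ from Example~\ref{rem:00} is applicable.
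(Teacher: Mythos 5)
Your argument is correct and is essentially the paper's own proof: both identify the induced Hoffman subgraph on a slim vertex with three fat neighbors as $\mathfrak{K}_{1,3}$, invoke $\lambda_{\min}(\mathfrak{K}_{1,3})=-3$ from Example~\ref{rem:00}, and derive a contradiction with $\lambda_{\min}(\Ho)\geq -1-\tau > -3$ via Lemma~\ref{lm:001}. Your explicit remark that the non-adjacency of fat vertices is what pins down the isomorphism type is a minor elaboration the paper leaves implicit, but the route is the same.
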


\begin{proof}
If a slim vertex $v$ of $\Ho$ has at least $3$ fat neighbors,
then $\Ho$ contains an induced Hoffman subgraph 
isomorphic to the Hoffman graph $\mathfrak{K}_{1,3}$ 
(see Example \ref{rem:00}). 
By Lemma \ref{lm:001}, we have 
$\lambda_{\min}(\Ho) \leq \lambda_{\min}(\mathfrak{K}_{1,3}) = -3$,
which is a contradiction to 
$\lambda_{\min}(\Ho) \geq -1-\tau$.
\end{proof}

\begin{lemma}\label{lm:-D}
Let $\s$ be a connected edge-signed graph with three vertices. 
Let $D$ be a $3\times3$ diagonal matrix with diagonal entries 
$1$ or $2$ such that at least one of the diagonal entries is $2$. 
Then 
$M(\s) - D$ has the smallest eigenvalue less than
$-1-\tau$.
\end{lemma}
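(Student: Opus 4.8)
The plan is to show that the least eigenvalue of the symmetric matrix $M(\s)-D$ lies strictly below $-1-\tau$, and the main work is to collapse the a priori infinite family of admissible pairs $(\s,D)$ to a handful of explicit $3\times3$ matrices. I would do this through two reductions (monotonicity in $D$, and switching invariance of $\s$) followed by a short finite computation.

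First I would reduce the choice of $D$. The smallest eigenvalue is monotone with respect to the Loewner order, so $D'\succeq D$ forces $\lambda_{\min}(M(\s)-D')\le\lambda_{\min}(M(\s)-D)$. Every admissible $D$ dominates, entrywise and hence (being diagonal) in the Loewner order, one of the three matrices having a single diagonal entry equal to $2$ and the other two equal to $1$. Thus it suffices to prove the inequality for such minimal $D$, and the general statement follows.

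Next I would exploit switching. For a $\pm1$ diagonal matrix $\Sigma$ one has $\Sigma\bigl(M(\s)-D\bigr)\Sigma=\Sigma M(\s)\Sigma-D$, since $\Sigma$ commutes with the diagonal $D$ and $\Sigma^2=I$; here $\Sigma M(\s)\Sigma$ is again the signed adjacency matrix of an edge-signed graph (the switching of $\s$), so switching preserves the spectrum of $M(\s)-D$. A connected edge-signed graph on three vertices has underlying graph $P_3$ or $K_3$. Every signed $P_3$ is switching-equivalent to the all-$(+)$ path, while every signed $K_3$ is switching-equivalent to the all-$(+)$ triangle (product of signs $+$) or to the all-$(-)$ triangle (product of signs $-$). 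Both triangle representatives are invariant under all permutations of the three vertices, so the position of the entry $2$ is immaterial there; only for the path must one distinguish the $2$ at the central vertex from the $2$ at an end vertex.

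Finally I would carry out the four resulting computations, in each case forming the characteristic polynomial of $M(\s)-D$ and locating its least root: the path with the $2$ at the centre gives $-3$; the path with the $2$ at an end vertex gives the least root of $x^3+4x^2+3x-1$, which lies strictly between $-3$ and $-1-\tau$; the all-$(+)$ triangle gives $-1-\sqrt3$; and the all-$(-)$ triangle gives $-2-\sqrt2$. Each of these is strictly less than $-1-\tau$, the closest being $-1-\sqrt3$, which still satisfies $-1-\sqrt3<-1-\tau$ because $\sqrt3>\tau$. Combined with the two reductions, this yields $\lambda_{\min}(M(\s)-D)<-1-\tau$ for every admissible $D$. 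I expect the genuine obstacle to be completeness rather than arithmetic: one must verify carefully that monotonicity in $D$ together with switching invariance really reduces the whole family to these four matrices, and keep honest track of which vertex carries the entry $2$; the eigenvalue evaluations themselves are routine.
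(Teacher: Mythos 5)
Your proposal is correct, and it takes essentially the same route as the paper, which disposes of this lemma with the single line ``This can be checked by a direct calculation'': your monotonicity and switching reductions are both sound ways of organizing that finite check, and the four representative cases are exhaustive. The resulting smallest eigenvalues --- $-3$ for the path with the $2$ at the centre, the least root of $x^3+4x^2+3x-1$ (which lies in $(-3,-1-\tau)$ since the polynomial is negative at $-3$ and positive at $-1-\tau$) for the path with the $2$ at an end, $-1-\sqrt{3}$ for the all-positive triangle, and $-2-\sqrt{2}$ for the all-negative triangle --- are all computed correctly and each is strictly less than $-1-\tau$.
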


\begin{proof}
This can be checked by a direct calculation. 
\end{proof}

\begin{lemma}\label{lm:004}
Let $\Ho$ be a fat indecomposable 
Hoffman graph with smallest eigenvalue at least 
$-1-\tau$. 
If some slim vertex of $\Ho$ has at least two fat neighbors, 
then 
the special graph $\s(\Ho)$ 
of $\Ho$ is isomorphic to 
$\mathcal{Q}_{0,0,1}$, $\mathcal{Q}_{1,0,1}$, or $\mathcal{Q}_{0,1,1}$. 
\end{lemma}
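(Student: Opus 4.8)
The plan is to prove that the hypotheses force $\s(\Ho)$ to have at most two vertices. Once this is established, connectedness of $\s(\Ho)$ (which holds by Lemma~\ref{lm:007}, since $\Ho$ is indecomposable) pins it down: a connected edge-signed graph on at most two vertices is a single vertex, a single $(+)$-edge, or a single $(-)$-edge, and these are precisely $\mathcal{Q}_{0,0,1}$, $\mathcal{Q}_{1,0,1}$, and $\mathcal{Q}_{0,1,1}$.

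First I would set up the matrix picture. Let $v$ be a slim vertex with at least two fat neighbors; by Lemma~\ref{lm:005} it has exactly two, so $|N^f_{\Ho}(v)|=2$. Arguing as in the proof of Lemma~\ref{cor:MS-B}, no two distinct slim vertices of $\Ho$ can have two common fat neighbors, for such a pair would span an induced Hoffman subgraph of smallest eigenvalue at most $-3<-1-\tau$, contradicting Lemma~\ref{lm:001}. Hence Lemma~\ref{p:002} applies and $B(\Ho)=M(\s(\Ho))-D(\Ho)$, where $D(\Ho)$ is diagonal with $D(\Ho)_{xx}=|N^f_{\Ho}(x)|$. Since $\Ho$ is fat and by Lemma~\ref{lm:005}, every such entry lies in $\{1,2\}$, and in particular $D(\Ho)_{vv}=2$.

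The heart of the proof is then a contradiction extracted from Lemma~\ref{lm:-D}. Assume for contradiction that $\s(\Ho)$ has at least three vertices. Its underlying graph is connected on at least three vertices, so I can choose two further slim vertices $u,w$ with $\{v,u,w\}$ inducing a connected edge-signed subgraph $\s'$ of $\s(\Ho)$: if $v$ has two distinct neighbors, take them as $u$ and $w$; otherwise $v$ has a unique neighbor $u$, which by connectedness has some neighbor $w\neq v$, giving the path $v-u-w$. Now let $\Ho'$ be the Hoffman subgraph of $\Ho$ induced by $\{v,u,w\}\cup N^f_{\Ho}(v)\cup N^f_{\Ho}(u)\cup N^f_{\Ho}(w)$. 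Because every fat neighbor of $v,u,w$ is retained, $B(\Ho')$ is exactly the principal submatrix of $B(\Ho)$ indexed by $\{v,u,w\}$, we have $\s(\Ho')=\s'$, and $D(\Ho')$ is the corresponding $3\times3$ diagonal block, with entries in $\{1,2\}$ and $v$-entry equal to $2$. Thus $B(\Ho')=M(\s')-D(\Ho')$, and since $\s'$ is connected on three vertices with a diagonal entry $2$, Lemma~\ref{lm:-D} gives $\lambda_{\min}(\Ho')<-1-\tau$. This contradicts $\lambda_{\min}(\Ho')\geq\lambda_{\min}(\Ho)\geq-1-\tau$ from Lemma~\ref{lm:001}, completing the proof that $|V(\s(\Ho))|\leq2$.

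I expect the only delicate point to be the elementary graph-theoretic step that furnishes the triple $\{v,u,w\}$ spanning a \emph{connected} three-vertex subgraph through the prescribed vertex $v$; the rest is a mechanical transfer to the identity $B(\Ho)=M(\s(\Ho))-D(\Ho)$ followed by the eigenvalue estimate of Lemma~\ref{lm:-D}. A pleasant feature of this route is that it sidesteps the full enumeration of Theorem~\ref{p:005++}, relying only on the three-vertex case encapsulated in Lemma~\ref{lm:-D}.
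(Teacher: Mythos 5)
Your proof is correct and follows essentially the same route as the paper's: both reduce the problem to showing that a fat indecomposable Hoffman graph with three slim vertices, one of which has two fat neighbors, has smallest eigenvalue below $-1-\tau$, and both settle that case by writing $B = M(\s) - D$ and invoking Lemma~\ref{lm:-D}. You merely spell out the details the paper leaves implicit (the choice of a connected slim triple through $v$, and the verification of the hypothesis of Lemma~\ref{p:002}), which is a welcome elaboration rather than a different argument.
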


\begin{proof}
In view of Lemma \ref{lm:001}, 
it suffices to show that every fat indecomposable Hoffman graph 
with three slim vertices, in which some slim vertex has two fat neighbors, 
has the smallest eigenvalue less than 
$-1-\tau$. 

Let $\Ho$ be such a Hoffman graph. Then $\s(\Ho)$
is connected by Lemma~\ref{lm:007} and
$B(\Ho)=M(\s(\Ho))-D$ for some diagonal matrix $D$
with diagonal entries $1$ or $2$ such that at least
one of the diagonal entries is $2$. Then 
we have a contradiction by Lemma~\ref{lm:-D}.
\end{proof}

\begin{exa}\label{ex:H-16-17}
Let $\Ho_{{\rm XVI}}$ 
and $\Ho_{{\rm XVII}}$ 
be the Hoffman graphs in Figure \ref{fig:FHG-1-2}. 
The special graphs of $\Ho_{{\rm XVI}}$ and $\Ho_{{\rm XVII}}$ 
are 
$\mathcal{Q}_{1,0,1}$ and $\mathcal{Q}_{0,1,1}$, respectively, 
and 
$\lambda_{\min}(\Ho_{{\rm XVI}}) 
= \lambda_{\min}(\Ho_{{\rm XVII}}) = -1-\tau$. 
\end{exa}

\begin{figure}[!h]
\begin{center}
\begin{tabular}{cc}
    \includegraphics[scale=0.15]{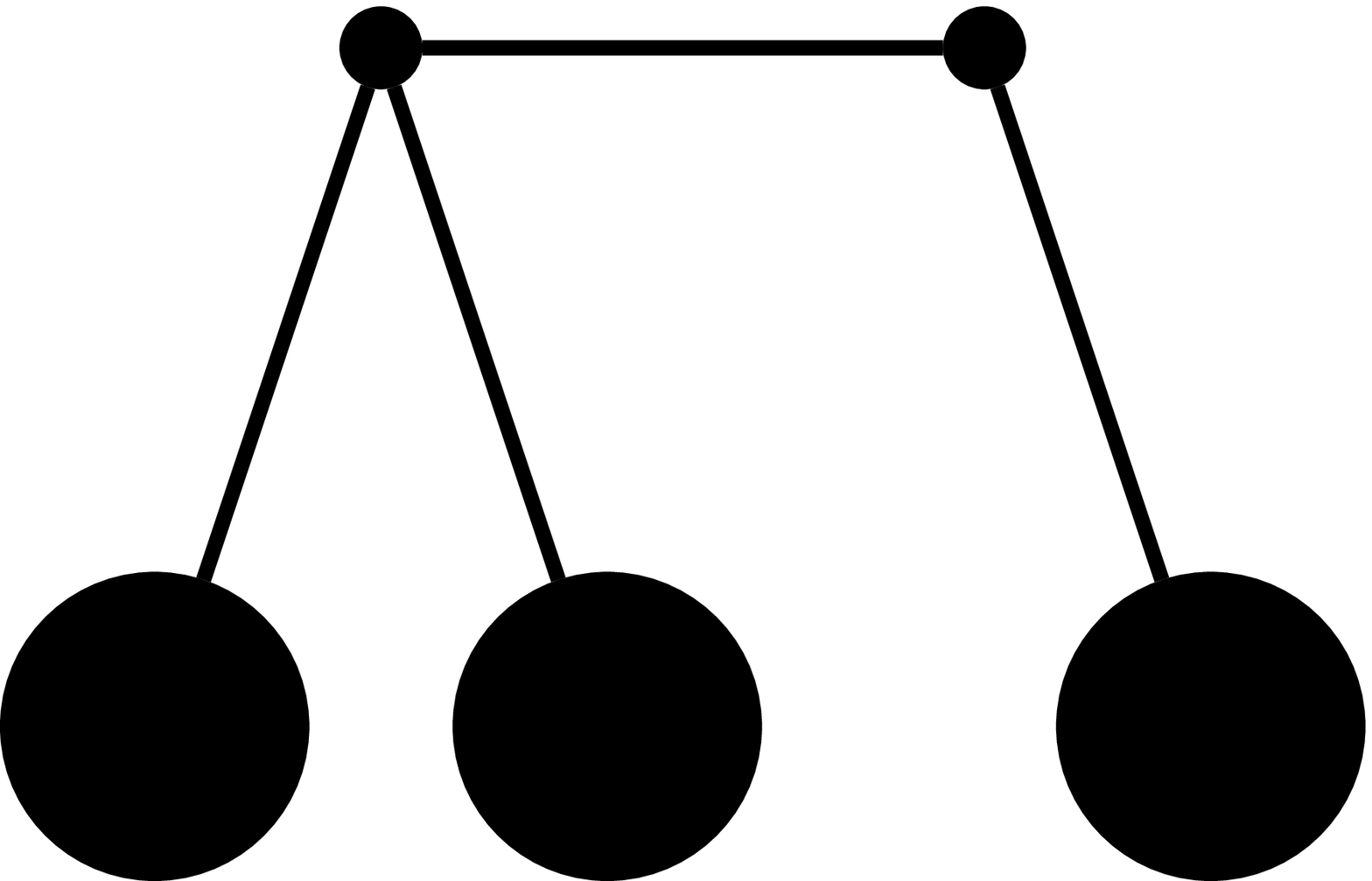} &
    \includegraphics[scale=0.15]{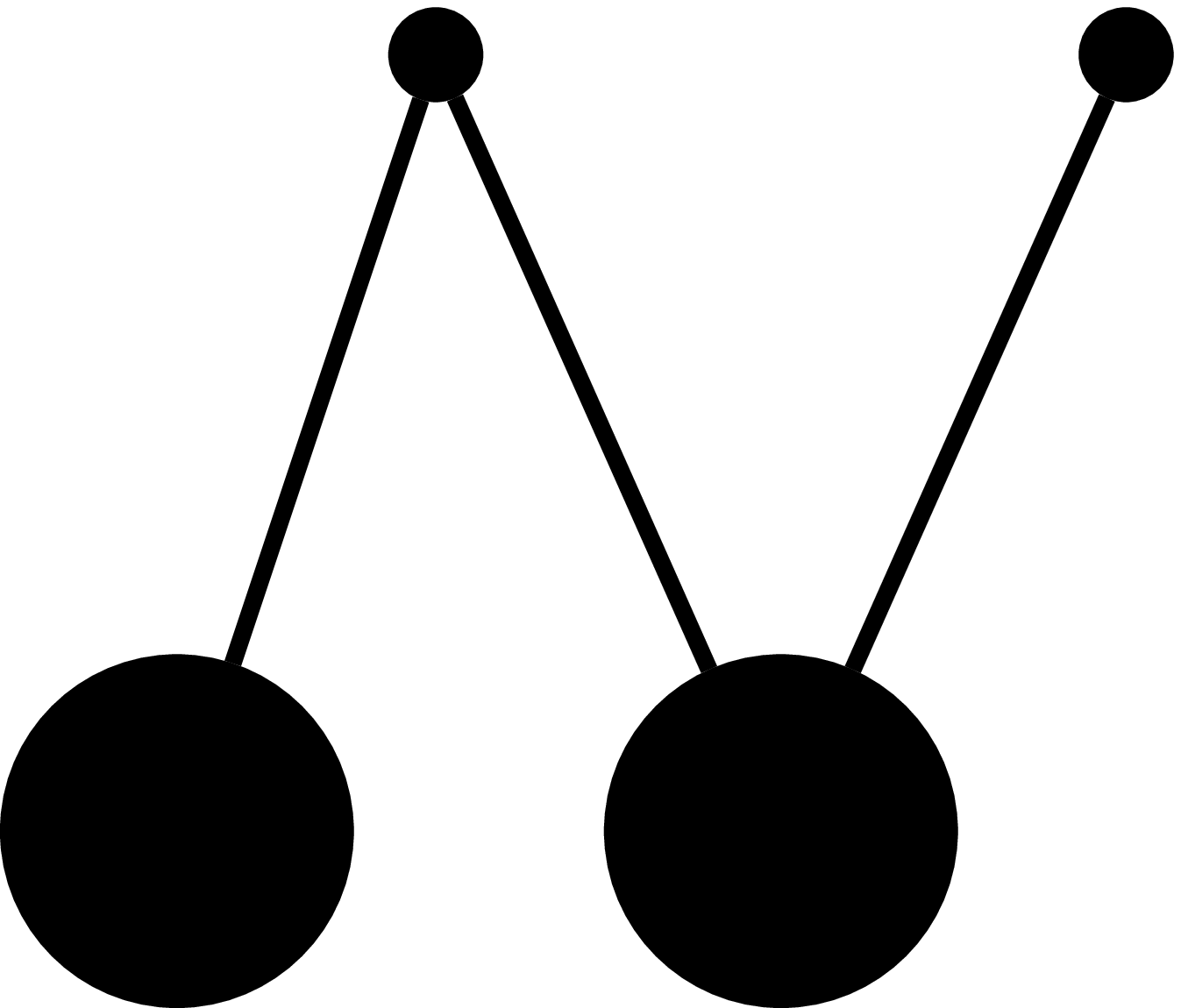} \\
$\Ho_{{\rm XVI}}$ & $\Ho_{{\rm XVII}}$ \\
\end{tabular}
\end{center}
\caption{
Fat indecomposable Hoffman graphs
}
\label{fig:FHG-1-2}
\end{figure}

\begin{lemma}\label{lm:012}
Let $\Ho$ be a Hoffman graph 
in which every slim vertex has at most one fat neighbor. 
Then the special graph $\s(\Ho)$ of $\Ho$ does not contain 
an induced edge-signed subgraph isomorphic to $\mathcal{T}_1$. 
\end{lemma}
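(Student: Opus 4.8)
The plan is to argue by contradiction, translating the combinatorial shape of an induced $\mathcal{T}_1$ inside $\s(\Ho)$ back into statements about adjacency and common fat neighbors in $\Ho$, and then to collide these with the hypothesis that every slim vertex has at most one fat neighbor. The only real content is bookkeeping the sign conventions of the special graph correctly, so I do not expect a substantial obstacle.

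First I would suppose, for contradiction, that $\s(\Ho)$ contains an induced edge-signed subgraph isomorphic to $\mathcal{T}_1$, realized by three slim vertices $u_1,u_2,u_3$ of $\Ho$ matched to $v_1,v_2,v_3$, so that the unique $(+)$-edge is $\{u_1,u_2\}$ and the two $(-)$-edges are $\{u_1,u_3\}$ and $\{u_2,u_3\}$. Unwinding the definition of $\s(\Ho)$, the $(+)$-edge $\{u_1,u_2\}$ gives $N^f_{\Ho}(u_1)\cap N^f_{\Ho}(u_2)=\emptyset$, while each $(-)$-edge $\{u_i,u_3\}$ with $i\in\{1,2\}$ gives $N^f_{\Ho}(u_i)\cap N^f_{\Ho}(u_3)\neq\emptyset$.

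Next I would apply the hypothesis at the vertex $u_3$, which is precisely the vertex incident with both negative edges of $\mathcal{T}_1$. Since $u_3$ has at most one fat neighbor and $N^f_{\Ho}(u_3)$ is nonempty, we have $N^f_{\Ho}(u_3)=\{f\}$ for a single fat vertex $f$. The two conditions $N^f_{\Ho}(u_1)\cap N^f_{\Ho}(u_3)\neq\emptyset$ and $N^f_{\Ho}(u_2)\cap N^f_{\Ho}(u_3)\neq\emptyset$ then force $f\in N^f_{\Ho}(u_1)$ and $f\in N^f_{\Ho}(u_2)$, whence $f\in N^f_{\Ho}(u_1)\cap N^f_{\Ho}(u_2)$. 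This contradicts the emptiness of that intersection extracted from the $(+)$-edge $\{u_1,u_2\}$, and the lemma follows.

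The argument uses the hypothesis only at the single vertex $u_3$, so the key structural feature being exploited is that in $\mathcal{T}_1$ the two $(-)$-edges share a common endpoint while the remaining edge is a $(+)$-edge; a shared fat neighbor forced through $u_3$ then propagates to $u_1$ and $u_2$ and violates the disjointness required by the positive edge. The point to keep straight throughout is that a $(+)$-edge of the special graph certifies \emph{disjoint} fat-neighborhoods whereas a $(-)$-edge certifies a \emph{shared} fat neighbor, and no other property of $\Ho$ is needed.
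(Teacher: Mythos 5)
Your proof is correct and follows the same route as the paper: both isolate the vertex $u_3$ incident with the two $(-)$-edges, use the hypothesis to pin down its unique fat neighbor $f$, propagate $f$ to $u_1$ and $u_2$ via the nonempty intersections, and contradict the disjointness certified by the $(+)$-edge $\{u_1,u_2\}$. No issues.
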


\begin{proof}
Suppose that the special graph $\s(\Ho)$ of $\Ho$ contains 
$\mathcal{T}_1 = ( \{v_1,v_2,v_3\},$ 
$\{\{v_1,v_2\}\},$ $\{\{v_1,v_3\}, \{v_2,v_3\}\})$ 
as an induced edge-signed subgraph. 
Since $v_3$ is incident to a $(-)$-edge, 
$v_3$ must have a fat neighbor. 
Since every slim vertex of $\Ho$ has at most one fat neighbor, 
$v_3$ has a unique fat neighbor $f$. 
Then $f$ is adjacent to $v_1$ and $v_2$. 
This is a contradiction to $\{v_1,v_2\} \in E^+(\mathcal{T}_1)$. 
\end{proof}

\begin{lemma}\label{p:4-001}
Let $\Ho$ be a Hoffman graph 
in which every slim vertex has exactly one fat neighbor. 
If 
the special graph 
$\s(\Ho)$ of $\Ho$ is isomorphic to $\mathcal{Q}_{p,q,r}$ 
for some non-neg\-a\-tive integers $p,q,r$,
then $\Ho$ is an induced Hoffman subgraph 
of a Hoffman graph $\Ho'$ 
with $V^s(\Ho')=V^s(\Ho)$ 
which 
has a decomposition $\{\Ho^i\}_{i=1}^r$ such that
$\Ho^i$ is isomorphic to $\Ho_{{\rm XVI}}$,
$\Ho_{{\rm XVII}}$, or
$\Ho_{{\rm II}}$ for all $i=1,\dots,r$.
In particular, if $r \geq 2$, 
then $\Ho$ is $(-1-\tau)$-re\-duc\-i\-ble. 
\end{lemma}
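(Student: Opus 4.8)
The plan is to realize the claimed decomposition concretely by adjoining a single fat vertex, so that the whole statement reduces to a bookkeeping of fat neighbors, and the only genuine work will be checking axiom (iv) of Definition~\ref{df:002}.

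First I would fix names coming from the isomorphism $\s(\Ho)\cong\mathcal{Q}_{p,q,r}$. Write $V_r=\{w_1,\dots,w_r\}$ with $U_p=\{w_1,\dots,w_p\}$ and $U_q=\{w_{p+1},\dots,w_{p+q}\}$, and let $V_p=\{x_1,\dots,x_p\}$, $V_q=\{y_1,\dots,y_q\}$ be matched so that $\{x_i,w_i\}$ is the $(+)$-edge and $\{y_j,w_{p+j}\}$ the $(-)$-edge prescribed by Definition~\ref{df:Qpqr}. By hypothesis each slim vertex has a unique fat neighbor; call that of $w_i$ (resp.\ $x_i$) $g_i$ (resp.\ $h_i$). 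The decisive preliminary observation is that, because every slim vertex has exactly one fat neighbor, two slim vertices have a common fat neighbor precisely when that neighbor is the same one; feeding this into the definition of $\s(\Ho)$ shows that for any pair of slim vertices that is \emph{not} an edge of $\s(\Ho)$, the two vertices are adjacent in $\Ho$ if and only if they share their unique fat neighbor. Reading off the edges of $\mathcal{Q}_{p,q,r}$ then yields the data I need: the $g_i$ are pairwise distinct (the $w_i$ form a $(+)$-clique), $h_i\neq g_i$ for all $i$, and the fat neighbor of $y_j$ equals $g_{p+j}$.

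Next I would build $\Ho'$ from $\Ho$ by adjoining one new fat vertex $f^*$ joined to every vertex of $V_r$ and to nothing else, so that $\Ho$ is the induced Hoffman subgraph of $\Ho'$ on $V(\Ho)$ and $V^s(\Ho')=V^s(\Ho)$. I define $\Ho^i$ to be the Hoffman subgraph induced by $\{w_i\}\cup N^f_{\Ho'}(w_i)$, enlarged by $x_i$ when $i\le p$ and by $y_{i-p}$ when $p<i\le p+q$. Using the degree and sharing data above, together with $f^*\notin N^f_{\Ho'}(x_i)\cup N^f_{\Ho'}(y_j)$, one checks piece by piece that $\Ho^i\cong\Ho_{{\rm XVI}}$ for $i\le p$ (two adjacent slim vertices of fat-degrees $2$ and $1$ with no common fat vertex), $\Ho^i\cong\Ho_{{\rm XVII}}$ for $p<i\le p+q$ (two non-adjacent slim vertices of fat-degrees $2$ and $1$ sharing $g_{p+j}$), and $\Ho^i\cong\Ho_{{\rm II}}$ for $p+q<i\le r$ (the single slim vertex $w_i$ with its two fat neighbors $g_i$ and $f^*$). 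The heart of the argument is then verifying that $\{\Ho^i\}_{i=1}^r$ is a decomposition. Axioms (i)--(iii) are immediate, since each $\Ho^i$ contains all fat neighbors of its slim vertices and the slim vertex sets are disjoint and exhaust $V^s(\Ho)$. Axiom (iv) is the place where the fact that $\s(\Ho)$ does not determine $\Ho$ must be absorbed, and I expect it to be the main obstacle: the unknown coincidences among the $h_i$ and the $g_{i'}$ create extra adjacencies in $\Ho$, so the check must be uniform over all such Hoffman graphs. For a cross-piece pair of $V_r$-vertices, which is a $(+)$-edge and hence adjacent with no common fat neighbor in $\Ho$, the vertex $f^*$ becomes their \emph{only} common fat neighbor in $\Ho'$, giving exactly one, as required; for every other cross-piece pair $f^*$ is irrelevant, so the common fat neighbors in $\Ho'$ coincide with those in $\Ho$, and since such a pair is a non-edge of $\s(\Ho)$, the preliminary observation says it shares its unique fat neighbor if and only if it is adjacent, which is precisely axiom (iv).

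Finally, for the concluding statement, when $r\ge2$ I would coarsen $\{\Ho^i\}_{i=1}^r$ into two nonempty subfamilies; the union of a subfamily is again a Hoffman subgraph decomposed by that subfamily, so by Lemma~\ref{lm:002} its smallest eigenvalue is the minimum of the corresponding $\lambda_{\min}(\Ho^i)$, each of which lies in $\{-1-\tau,-2\}$ and hence is at least $-1-\tau$. Since each part contains some $w_i\in V^s(\Ho)$, this two-part decomposition of $\Ho'$ witnesses that $\Ho$ is $(-1-\tau)$-reducible.
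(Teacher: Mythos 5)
Your construction is exactly the paper's: adjoin a single new fat vertex $f^*$ joined to all of $V_r$, observe that $\Ho'$ then decomposes into $r$ pieces isomorphic to $\Ho_{{\rm XVI}}$, $\Ho_{{\rm XVII}}$, or $\Ho_{{\rm II}}$, and coarsen to two parts when $r\geq 2$; the paper merely asserts this decomposition, while you verify the axioms explicitly, so the extra detail is welcome but the route is identical. One small slip to fix: for $i\le p$ the piece must be the subgraph induced on $\{w_i,x_i\}\cup N^f_{\Ho'}(w_i)\cup N^f_{\Ho'}(x_i)$ --- your phrase ``enlarged by $x_i$'' literally omits the fat neighbor $h_i$ of $x_i$, which would violate condition (iii) of Definition~\ref{df:002} and spoil the isomorphism with $\Ho_{{\rm XVI}}$, although your parenthetical description of the piece (fat-degrees $2$ and $1$) shows you intend $h_i$ to be included.
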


\begin{proof}
By the assumption, 
$V^s(\Ho)=V(\s(\Ho))$ is partitioned into 
$V_p \cup V_q \cup V_r$ 
as Definition \ref{df:Qpqr}. 
Consider 
the Hoffman graph ${\Ho}'$ defined by 
$V^s({\Ho}') :=V^s(\Ho)$, 
$V^f({\Ho}') :=V^f(\Ho) \cup \{{f}^*\}$, 
and 
$E({\Ho}') :=E(\Ho) \cup \{\{v,{f}^*\} \mid v \in V_r\}$, 
where $f^*$ is a new fat vertex. 
Note that $\Ho$ is an induced Hoffman subgraph of $\Ho'$ 
with $V^s(\Ho)=V^s(\Ho')$. 
Then $\Ho'$ has a decomposition $\{\Ho^i\}_{i=1}^r$ with
$\Ho^i\cong \Ho_{{\rm XVI}}$ for $1\leq i\leq p$, 
$\Ho^i\cong \Ho_{{\rm XVII}}$ for $p < i\leq p+q$, and 
$\Ho^i\cong \Ho_{{\rm II}}$ for $p+q < i\leq p+q+r$
(see Examples \ref{ex:H-1-2-3} and \ref{ex:H-16-17}).
Since $r\geq2$ and 
each of the Hoffman graphs $\Ho^i$ has the smallest
eigenvalue at least $-1-\tau$, it follows that
$\Ho$ is $(-1-\tau)$-re\-duc\-i\-ble. 
\end{proof}

\begin{theorem}\label{thm:001-1}
Let $\Ho$ be a fat indecomposable 
Hoffman graph with smallest eigenvalue 
at least $-1-\tau$. 
Then the following hold:  
\begin{itemize}
\item[{\rm (i)}]
If some slim vertex of $\Ho$ has at least two fat neighbors, 
then the special graph $\s(\Ho)$ of $\Ho$ 
is isomorphic to 
$\mathcal{Q}_{0,0,1}$, $\mathcal{Q}_{1,0,1}$, or $\mathcal{Q}_{0,1,1}$. 
\item[{\rm (ii)}]
If every slim vertex of $\Ho$ has exactly one fat neighbor, 
then the special graph $\s(\Ho)$ of $\Ho$ 
is isomorphic to 
$\mathcal{Q}_{p,q,r}$ 
for some non-neg\-a\-tive integers $p,q,r$ with 
$p+q \leq r$ 
or 
one of the $15$ edge-signed graphs in Figure~\ref{fig:004}. 
\end{itemize}
\end{theorem}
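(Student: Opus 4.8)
The plan is to assemble the theorem directly from the structural results already established, treating the two parts separately according to the maximum number of fat neighbors of a slim vertex. Since $\Ho$ is fat, every slim vertex has at least one fat neighbor, and by Lemma~\ref{lm:005} every slim vertex has at most two fat neighbors; hence the two cases in the statement—``some slim vertex has at least two fat neighbors'' and ``every slim vertex has exactly one fat neighbor''—are mutually exclusive and exhaustive.

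For part~(i), I would simply invoke Lemma~\ref{lm:004}. Its hypothesis, namely that $\Ho$ is fat and indecomposable with $\lambda_{\min}(\Ho)\geq-1-\tau$ and that some slim vertex has at least two fat neighbors, is identical to the hypothesis of part~(i), and its conclusion is precisely that $\s(\Ho)$ is isomorphic to $\mathcal{Q}_{0,0,1}$, $\mathcal{Q}_{1,0,1}$, or $\mathcal{Q}_{0,1,1}$. Thus part~(i) requires no further work.

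For part~(ii), the strategy is to verify that $\s(\Ho)$ satisfies all three hypotheses of Theorem~\ref{p:005++} and then read off its conclusion. First, since $\Ho$ is indecomposable, Lemma~\ref{lm:007} gives that $\s(\Ho)$ is connected. Second, because $1+\tau\approx2.618<3$, we have $\lambda_{\min}(\Ho)\geq-1-\tau>-3$, so $\Ho$ is a fat Hoffman graph with smallest eigenvalue greater than $-3$; Lemma~\ref{cor:MS-B} then yields $\lambda_{\min}(\s(\Ho))\geq\lambda_{\min}(\Ho)+1\geq-\tau$. Third, since every slim vertex of $\Ho$ has exactly one—hence at most one—fat neighbor, Lemma~\ref{lm:012} guarantees that $\s(\Ho)$ contains no induced edge-signed subgraph isomorphic to $\mathcal{T}_1$.

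With these three facts in hand, Theorem~\ref{p:005++} applies verbatim to $\s=\s(\Ho)$ and delivers exactly the dichotomy claimed: either $\s(\Ho)\cong\mathcal{Q}_{p,q,r}$ for some non-negative integers $p,q,r$ with $p+q\leq r$, or $\s(\Ho)$ is isomorphic to one of the $15$ edge-signed graphs in Figure~\ref{fig:004}. I expect no real obstacle, since the entire argument reduces to checking that each cited result's hypotheses hold; the only mildly delicate points are the numerical inequality $-1-\tau>-3$ needed to apply Lemma~\ref{cor:MS-B} and the observation that ``exactly one fat neighbor'' implies the ``at most one fat neighbor'' hypothesis of Lemma~\ref{lm:012}. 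All the substantive content has already been discharged in Lemma~\ref{lm:004} and Theorem~\ref{p:005++}.
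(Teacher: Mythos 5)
Your proposal is correct and follows essentially the same route as the paper: part (i) is delegated to Lemma~\ref{lm:004}, and part (ii) combines Lemma~\ref{lm:007}, Lemma~\ref{cor:MS-B}, and Lemma~\ref{lm:012} to verify the hypotheses of Theorem~\ref{p:005++}. Your explicit check that $-1-\tau>-3$ (needed for Lemma~\ref{cor:MS-B}) is a small point the paper leaves implicit, but otherwise the two arguments coincide.
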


\begin{proof}
The statement (i) follows from Lemma~\ref{lm:004}. 
We show (ii). 
Suppose that every slim vertex of $\Ho$ has exactly one
fat neighbor. By Lemma~\ref{lm:007}, $\s(\Ho)$ is connected,
and by Lemma~\ref{cor:MS-B}, $\s(\Ho)$ has smallest
eigenvalue at least $-\tau$. Moreover, by Lemma~\ref{lm:012},
$\s(\Ho)$ does not contain an induced edge-signed subgraph
isomorphic to $\mathcal{T}_1$. Now Theorem~\ref{p:005++}
implies that $\s(\Ho)$ is isomorphic $\mathcal{Q}_{p,q,r}$
or one of the $15$ edge-signed graphs in Figure~\ref{fig:004}.
\end{proof}

\begin{cor}\label{thm:001-2}
Let $\Ho$ be a fat 
$(-1-\tau)$-ir\-re\-duc\-i\-ble 
Hoffman graph. 
Then 
the special graph $\s(\Ho)$ of $\Ho$ 
is isomorphic to 
$\mathcal{Q}_{0,0,1}$, $\mathcal{Q}_{1,0,1}$, $\mathcal{Q}_{0,1,1}$, 
or one of the $15$ edge-signed graphs 
in Figure~\ref{fig:004}. 
\end{cor}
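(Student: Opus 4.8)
The plan is to combine the structural dichotomy of Theorem~\ref{thm:001-1} with the irreducibility hypothesis, where the latter is used precisely to trim the infinite family $\mathcal{Q}_{p,q,r}$ down to its three smallest members. The point is that Theorem~\ref{thm:001-1} already classifies the special graphs of \emph{fat indecomposable} Hoffman graphs with smallest eigenvalue at least $-1-\tau$, so the only genuinely new work is (a) to upgrade ``$(-1-\tau)$-irreducible'' to ``indecomposable'' so that Theorem~\ref{thm:001-1} applies, and (b) to discard every $\mathcal{Q}_{p,q,r}$ with $r\ge 2$.

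First I would record that a fat $(-1-\tau)$-irreducible Hoffman graph $\Ho$ is indecomposable. By definition $\lambda_{\min}(\Ho)\ge -1-\tau$. If $\Ho$ admitted a decomposition $\{\Ho^i\}_{i=1}^n$ with $n\ge 2$, then grouping the parts as $\mathfrak{G}^1:=\Ho^1$ and $\mathfrak{G}^2:=\biguplus_{i=2}^n\Ho^i$ yields a two-part decomposition of $\Ho$ itself (the decomposition axioms of Definition~\ref{df:002} are stable under merging parts). Each $\mathfrak{G}^i$, being a nonempty Hoffman graph, contains a slim vertex, so $V^s(\mathfrak{G}^i)\cap V^s(\Ho)=V^s(\mathfrak{G}^i)\neq\emptyset$, and by Lemma~\ref{lm:001} we have $\lambda_{\min}(\mathfrak{G}^i)\ge\lambda_{\min}(\Ho)\ge -1-\tau$. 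Taking $\Ho'=\Ho$ in the definition of $(-1-\tau)$-reducibility then exhibits $\Ho$ as $(-1-\tau)$-reducible, a contradiction. Hence $\Ho$ is fat and indecomposable with $\lambda_{\min}(\Ho)\ge -1-\tau$, and Theorem~\ref{thm:001-1} applies.

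Next I would split into the two cases of Theorem~\ref{thm:001-1}, which are exhaustive because by Lemma~\ref{lm:005} every slim vertex has at most two fat neighbors while fatness forces at least one. If some slim vertex has exactly two fat neighbors, part~(i) gives $\s(\Ho)\cong\mathcal{Q}_{0,0,1}$, $\mathcal{Q}_{1,0,1}$, or $\mathcal{Q}_{0,1,1}$, all on the required list. If every slim vertex has exactly one fat neighbor, part~(ii) gives that $\s(\Ho)$ is either one of the $15$ graphs of Figure~\ref{fig:004}---again on the list---or $\mathcal{Q}_{p,q,r}$ for some $p+q\le r$.

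The crucial step, and the only one that uses irreducibility essentially, is to eliminate the large $\mathcal{Q}_{p,q,r}$. Here I would invoke Lemma~\ref{p:4-001}: since every slim vertex of $\Ho$ has exactly one fat neighbor and $\s(\Ho)\cong\mathcal{Q}_{p,q,r}$, the lemma shows that $r\ge 2$ would make $\Ho$ be $(-1-\tau)$-reducible, contrary to hypothesis; hence $r\le 1$. As $\Ho$ is a nonempty fat Hoffman graph it has a slim vertex, so $r\ge 1$, forcing $r=1$, and then $p+q\le 1$ leaves only $(p,q,r)\in\{(0,0,1),(1,0,1),(0,1,1)\}$. Collecting the cases yields exactly the asserted list. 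I expect the only mild obstacle to be the boundary bookkeeping---checking that merging decomposition parts preserves all four axioms of Definition~\ref{df:002}, and that the degenerate one-vertex configuration is correctly identified with $\mathcal{Q}_{0,0,1}$---rather than any real difficulty, since the substantive content is carried by Theorem~\ref{thm:001-1} and Lemma~\ref{p:4-001}.
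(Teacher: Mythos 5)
Your proposal is correct and follows essentially the same route as the paper: reduce to the indecomposable case (the paper cites Lemma~\ref{lm:002} where you re-derive the implication by merging decomposition parts), apply the dichotomy of Theorem~\ref{thm:001-1}, and use Lemma~\ref{p:4-001} to rule out $\mathcal{Q}_{p,q,r}$ with $r\ge 2$. The extra bookkeeping you supply (merging parts, pinning down $r=1$ and $p+q\le 1$) is sound and only makes explicit what the paper leaves implicit.
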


\begin{proof}
Since $\Ho$ is $(-1-\tau)$-ir\-re\-duc\-i\-ble, 
$\Ho$ is indecomposable. 
If some slim vertex of $\Ho$ has at least two fat neighbors, 
then the statement holds by Theorem \ref{thm:001-1} (i). 
Suppose that 
every slim vertex of $\Ho$ has exactly one fat neighbor. 
By Theorem \ref{thm:001-1} (ii), 
$\s(\Ho)$ is isomorphic to 
$\mathcal{Q}_{p,q,r}$ 
for some non-neg\-a\-tive integers $p,q,r$, 
or one of the $15$ edge-signed graphs 
in Figure~\ref{fig:004}. 
Since $\Ho$ is $(-1-\tau)$-ir\-re\-duc\-i\-ble, 
the former 
case occurs only for $r=1$ by Lemma~\ref{p:4-001}. 
Hence the corollary holds. 
\end{proof}

\subsection{The classification of fat 
Hoffman graphs with smallest eigenvalue at least 
$-1-\tau$}

\begin{figure}[!h]
\begin{center}
\begin{tabular}{cccc}
    \includegraphics[scale=0.15]{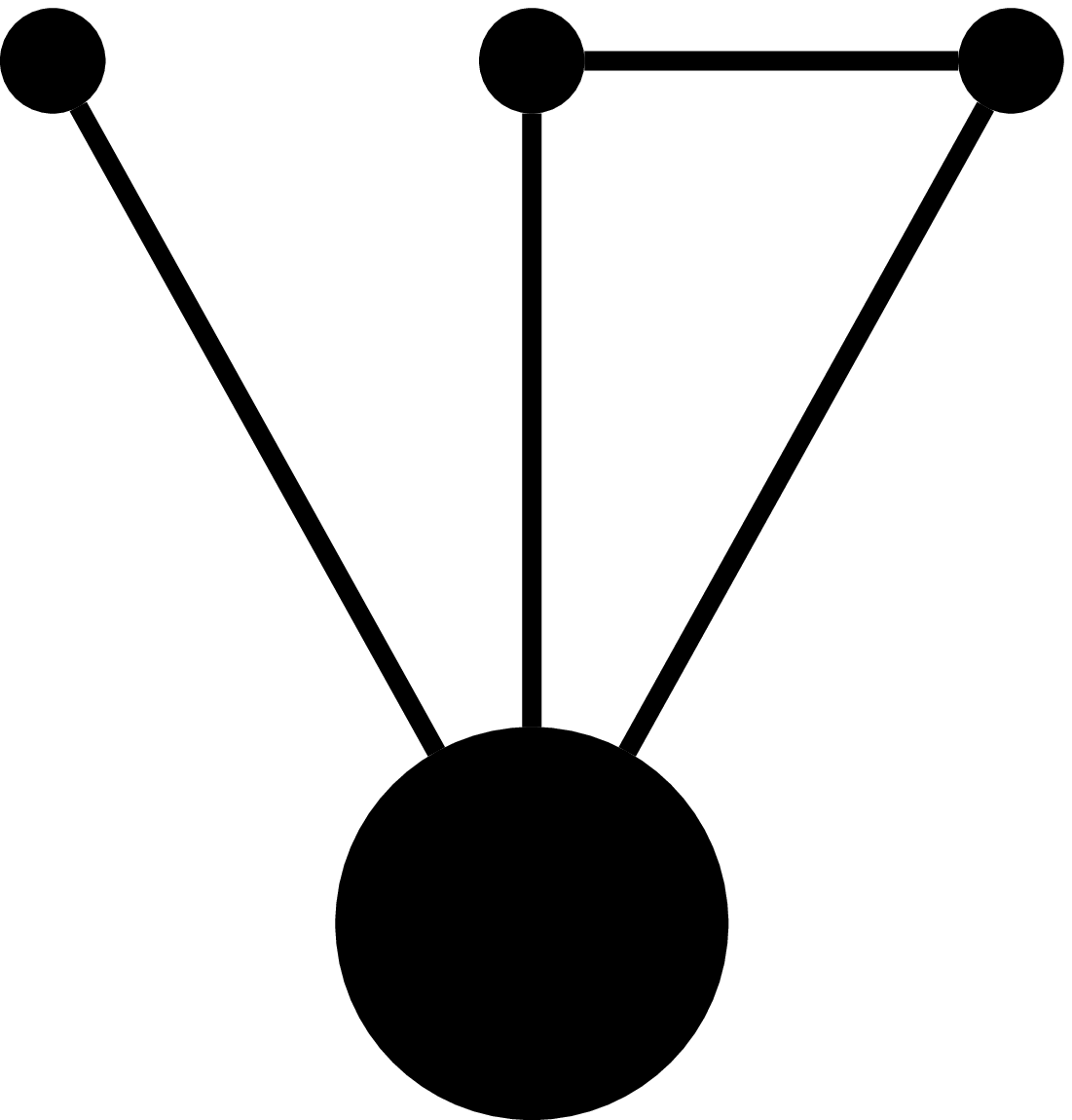} &
    \includegraphics[scale=0.15]{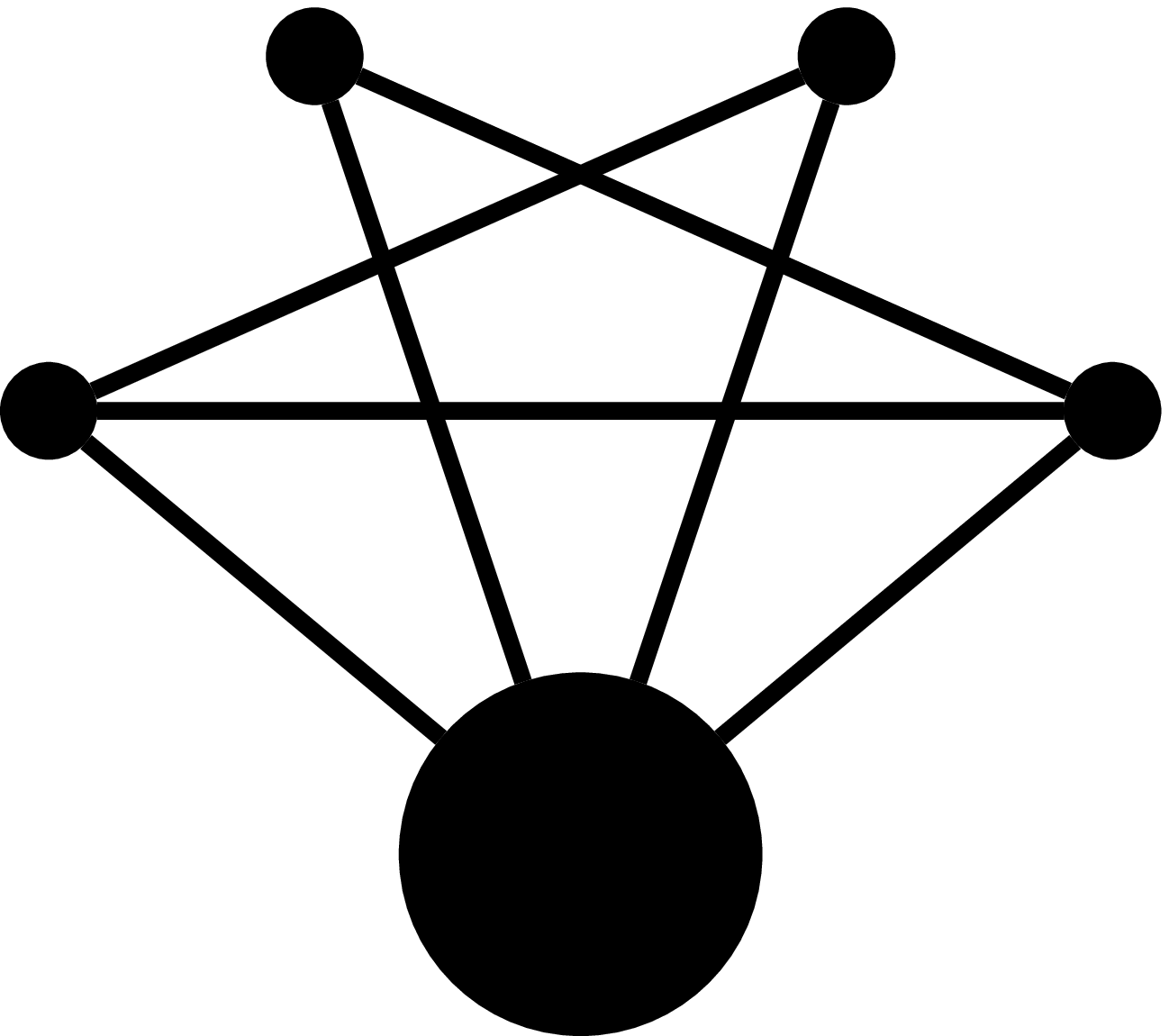} &
    \includegraphics[scale=0.15]{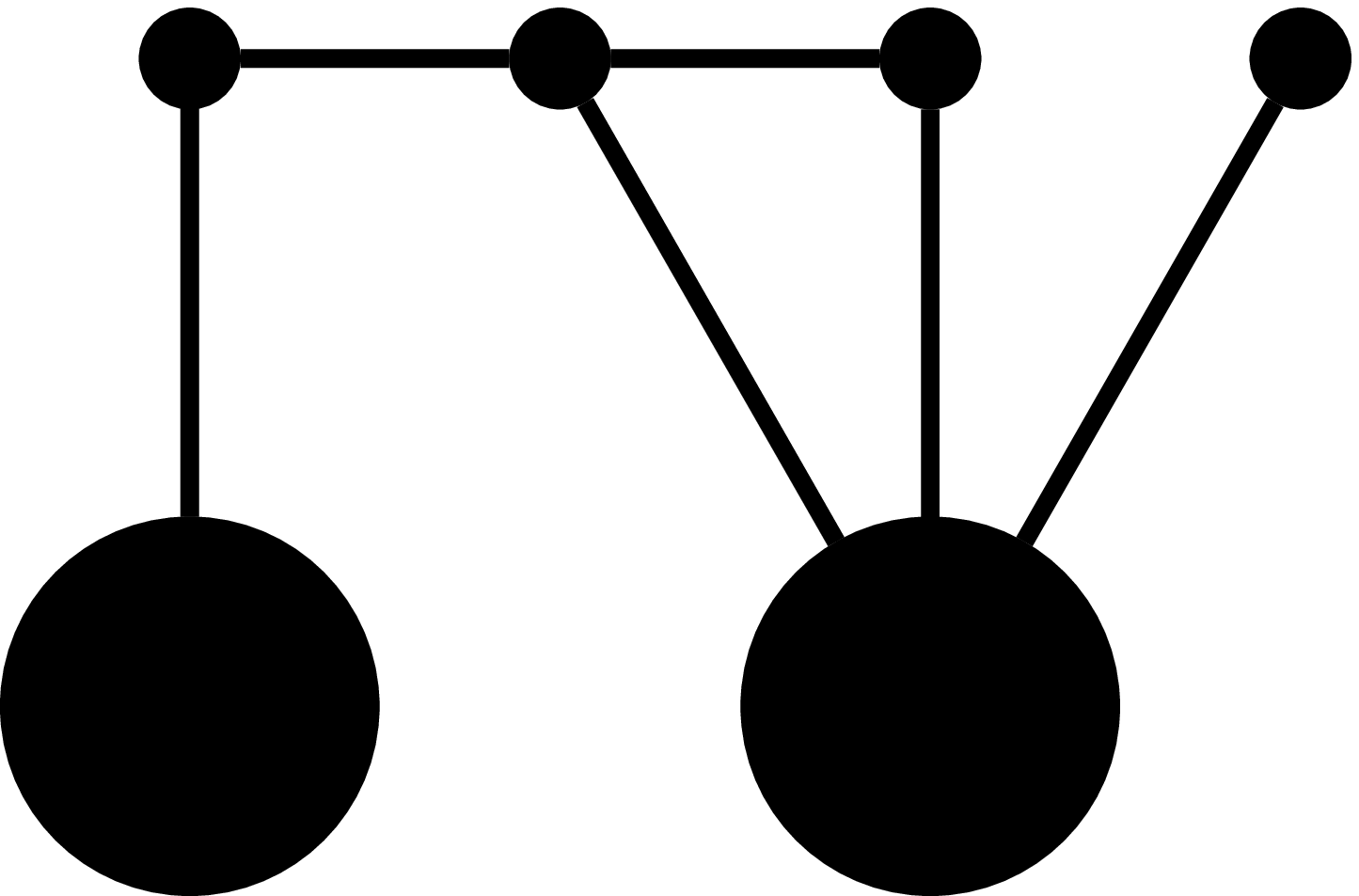} &
    \includegraphics[scale=0.15]{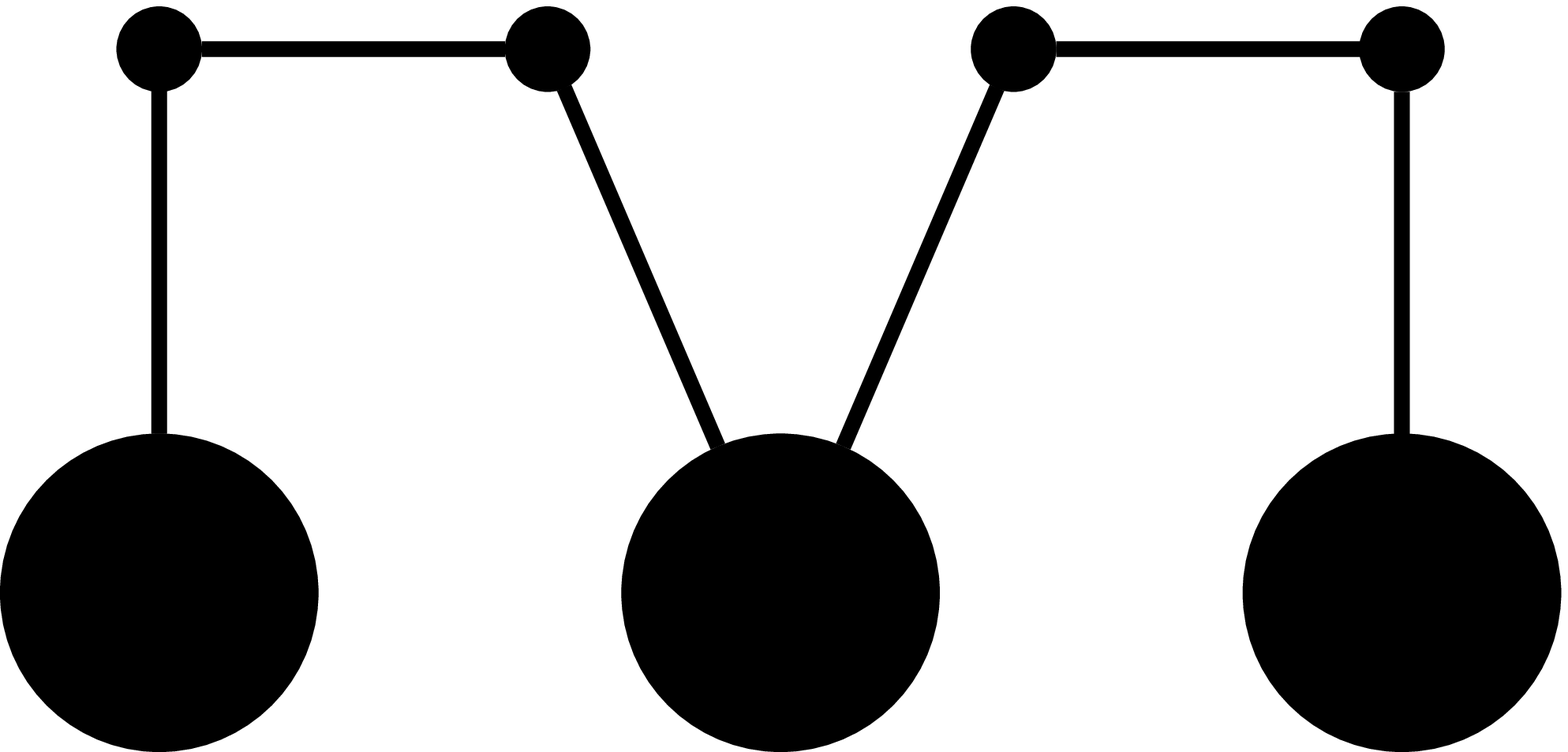} \\
    $\Ho_{3,1}^1$ & $\Ho_{4,1}^1$ &
    $\Ho_{4,2}^1$ & $\Ho_{4,3}^1$ \\
    &&& \\
    &&& \\
    \includegraphics[scale=0.15]{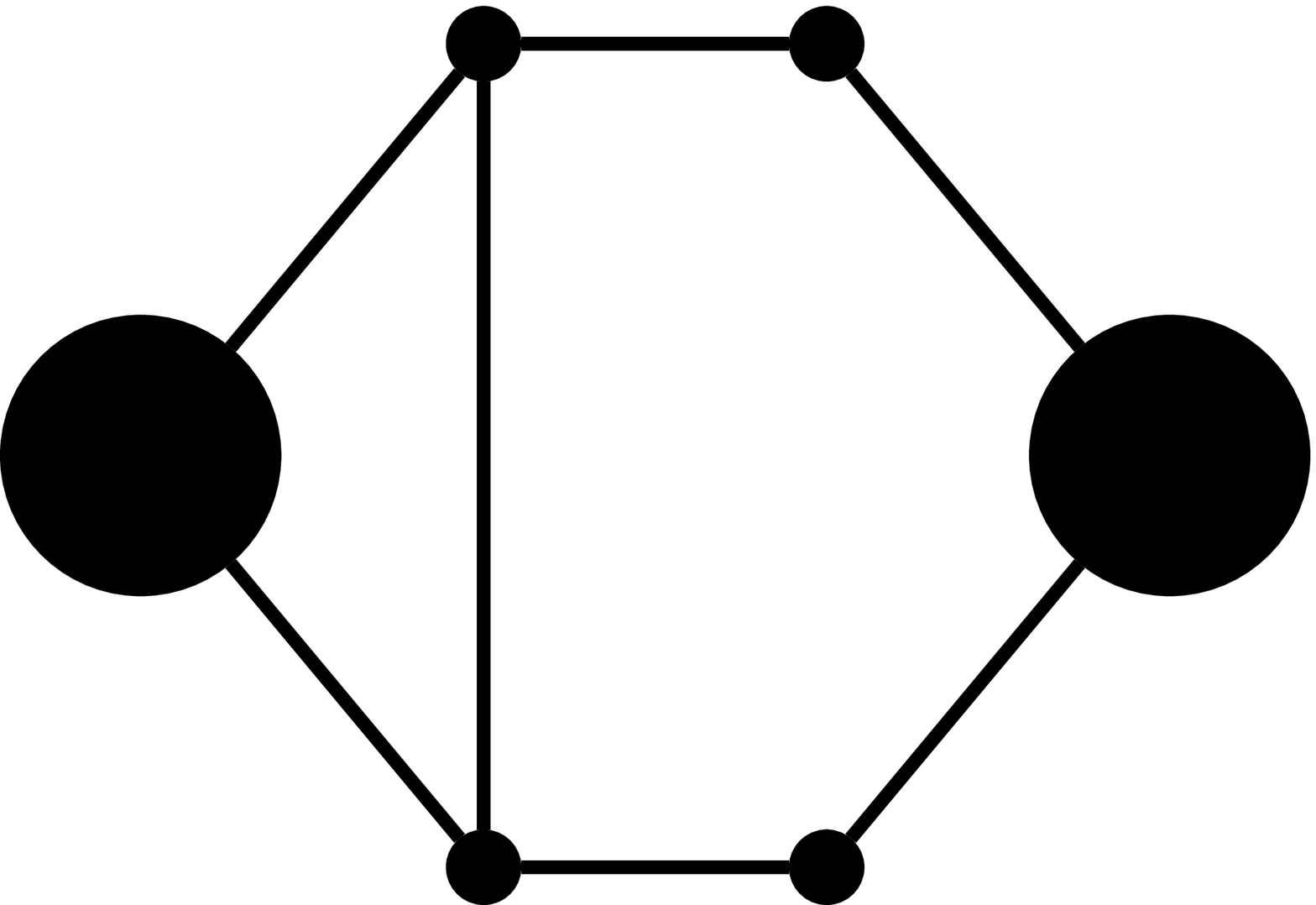} &
    \includegraphics[scale=0.15]{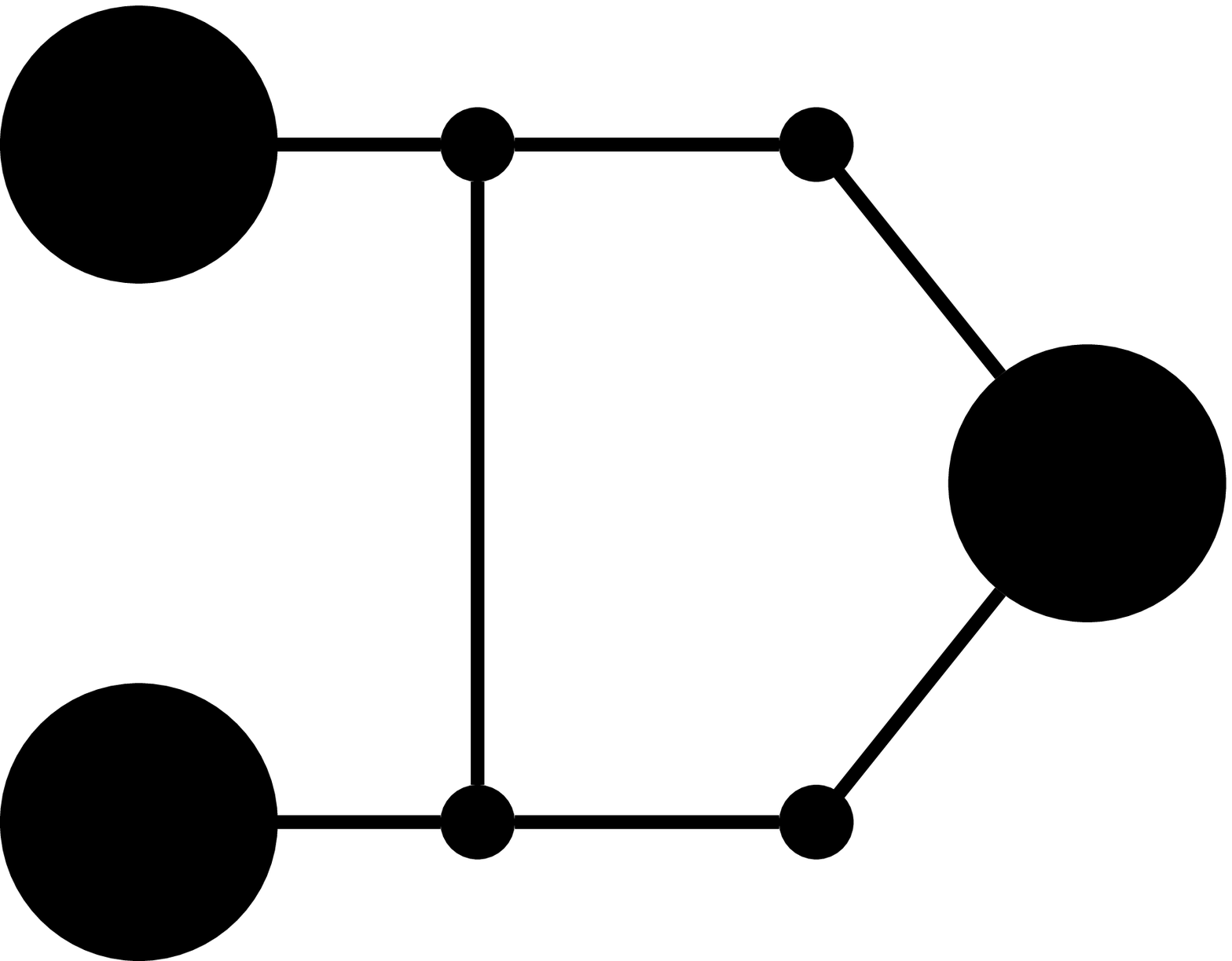} &
    \includegraphics[scale=0.15]{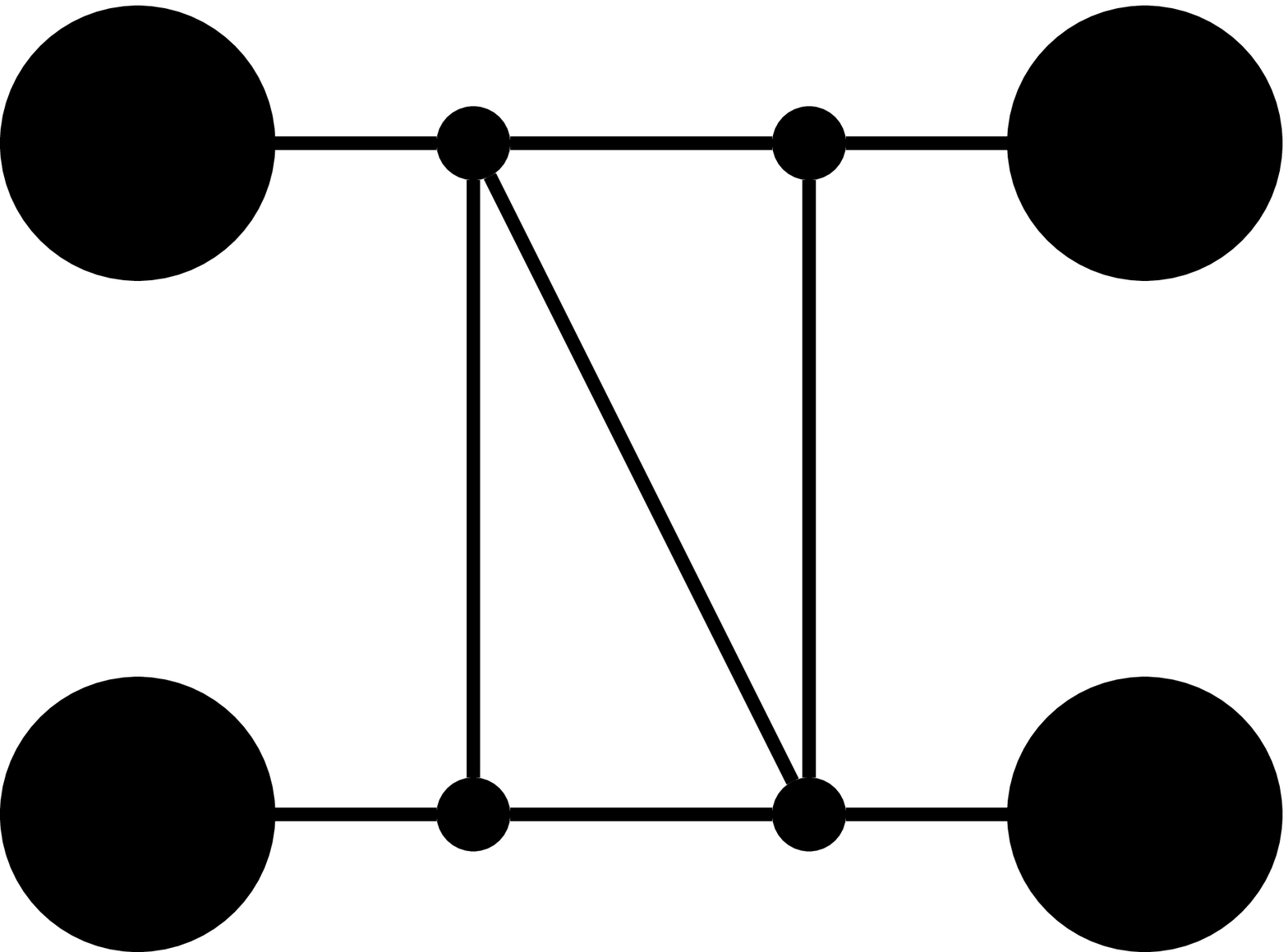} &
    \includegraphics[scale=0.15]{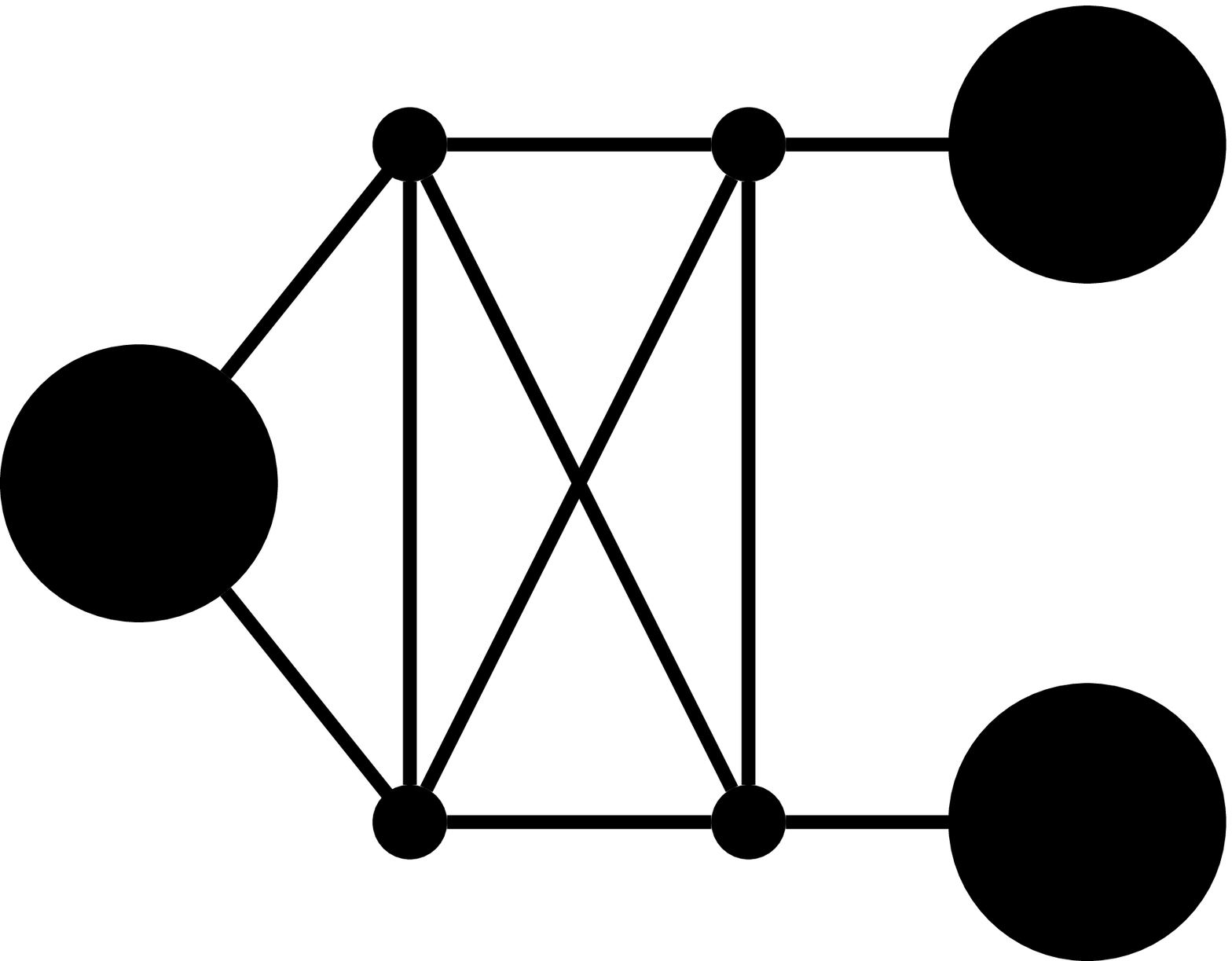} \\
$\Ho_{4,3}^2$ & $\Ho_{4,4}^1$ & 
$\Ho_{4,5}^1$ & $\Ho_{4,5}^2$ \\
\end{tabular}
\end{center}
\caption{The fat 
$(-1-\tau)$-ir\-re\-duc\-i\-ble 
Hoffman graphs with $3$ or $4$ slim vertices}
\label{fig:FHG-4}
\end{figure}

\begin{figure}[!h]
\begin{center}
\begin{tabular}{ccc}
    \includegraphics[scale=0.15]{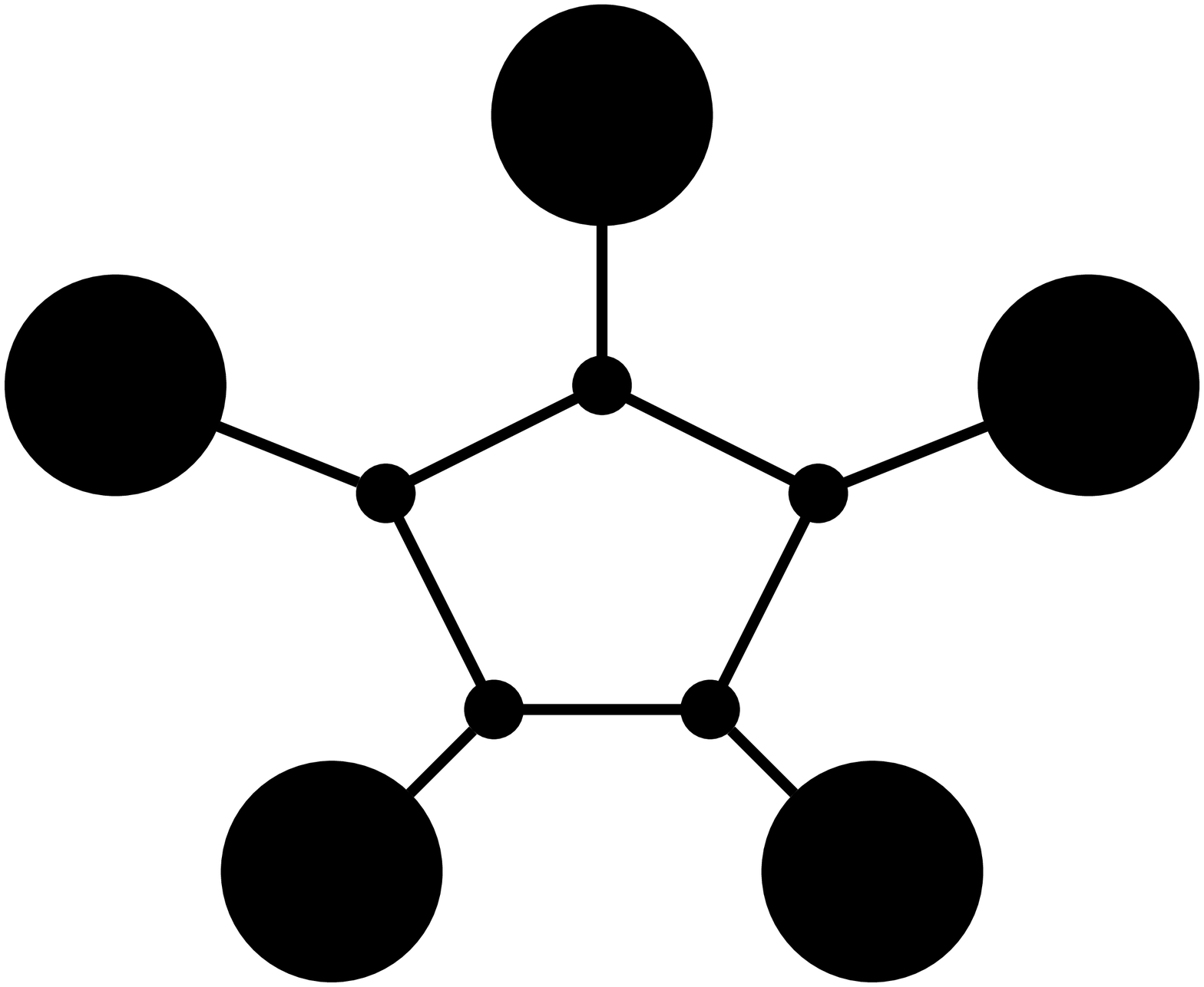} &
    \includegraphics[scale=0.15]{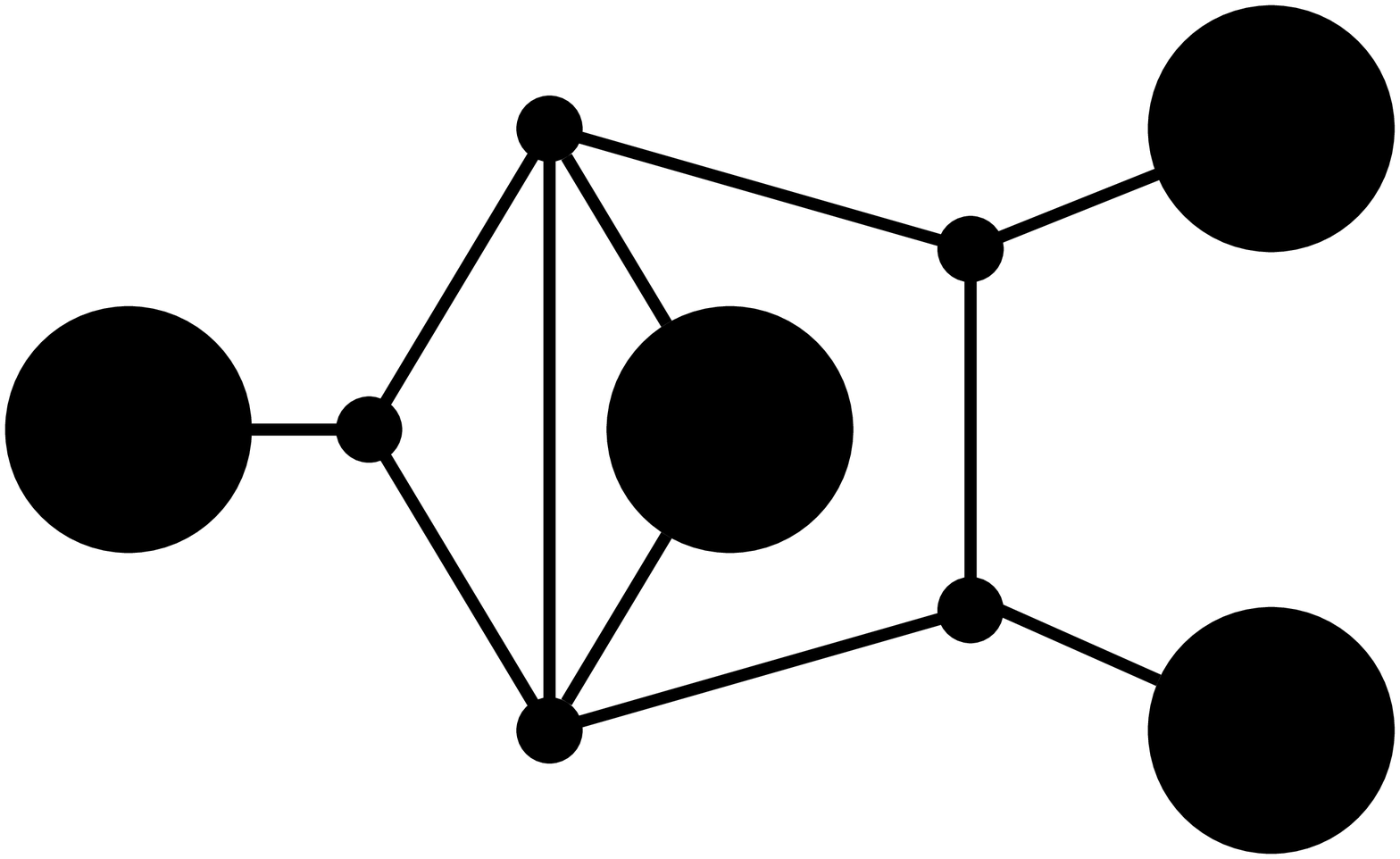} &
    \includegraphics[scale=0.15]{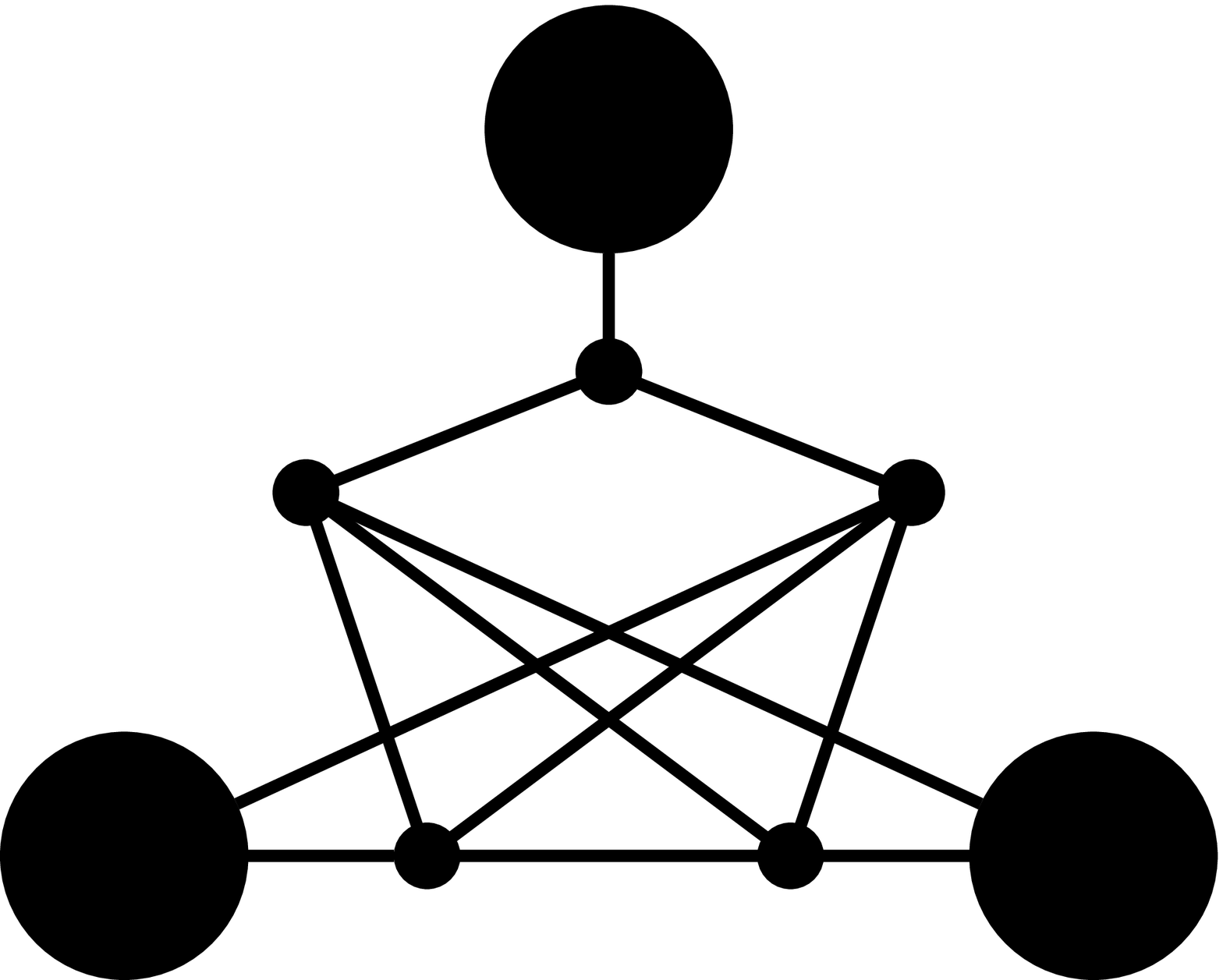} \\
    $\Ho_{5,1}^1$ & $\Ho_{5,1}^2$ & $\Ho_{5,1}^3$ \\
    && \\
    \includegraphics[scale=0.15]{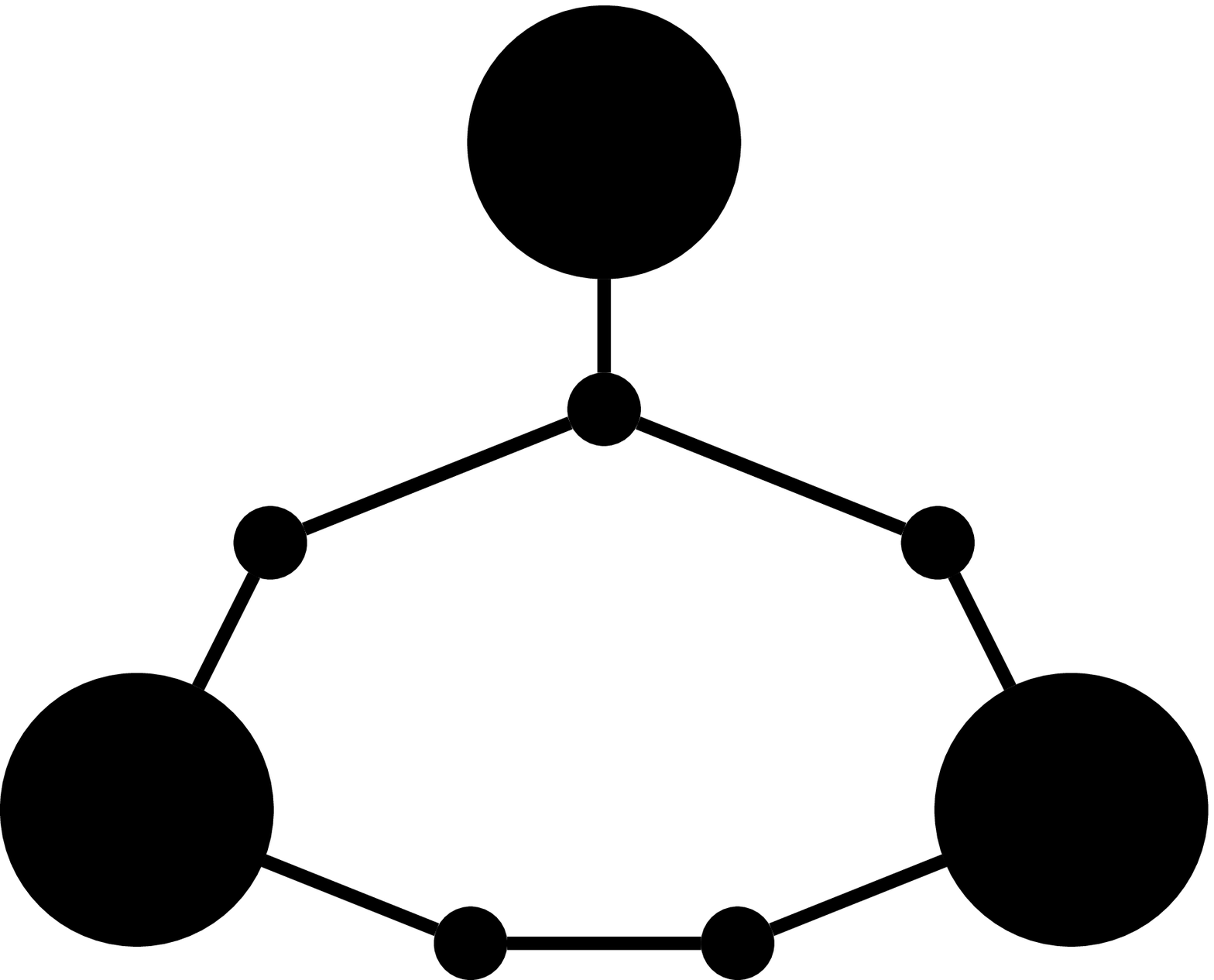} &
    \includegraphics[scale=0.15]{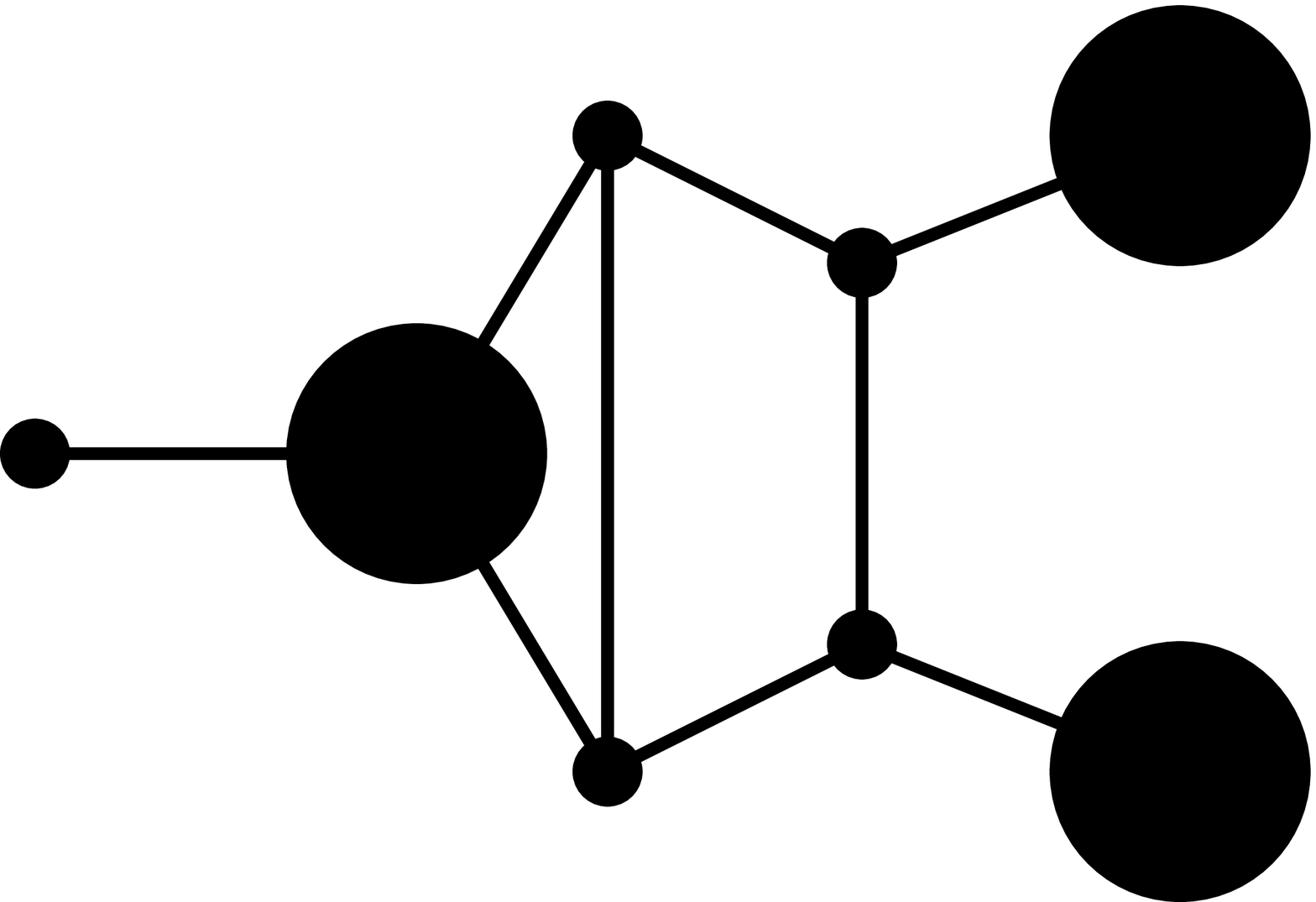} &
    \includegraphics[scale=0.15]{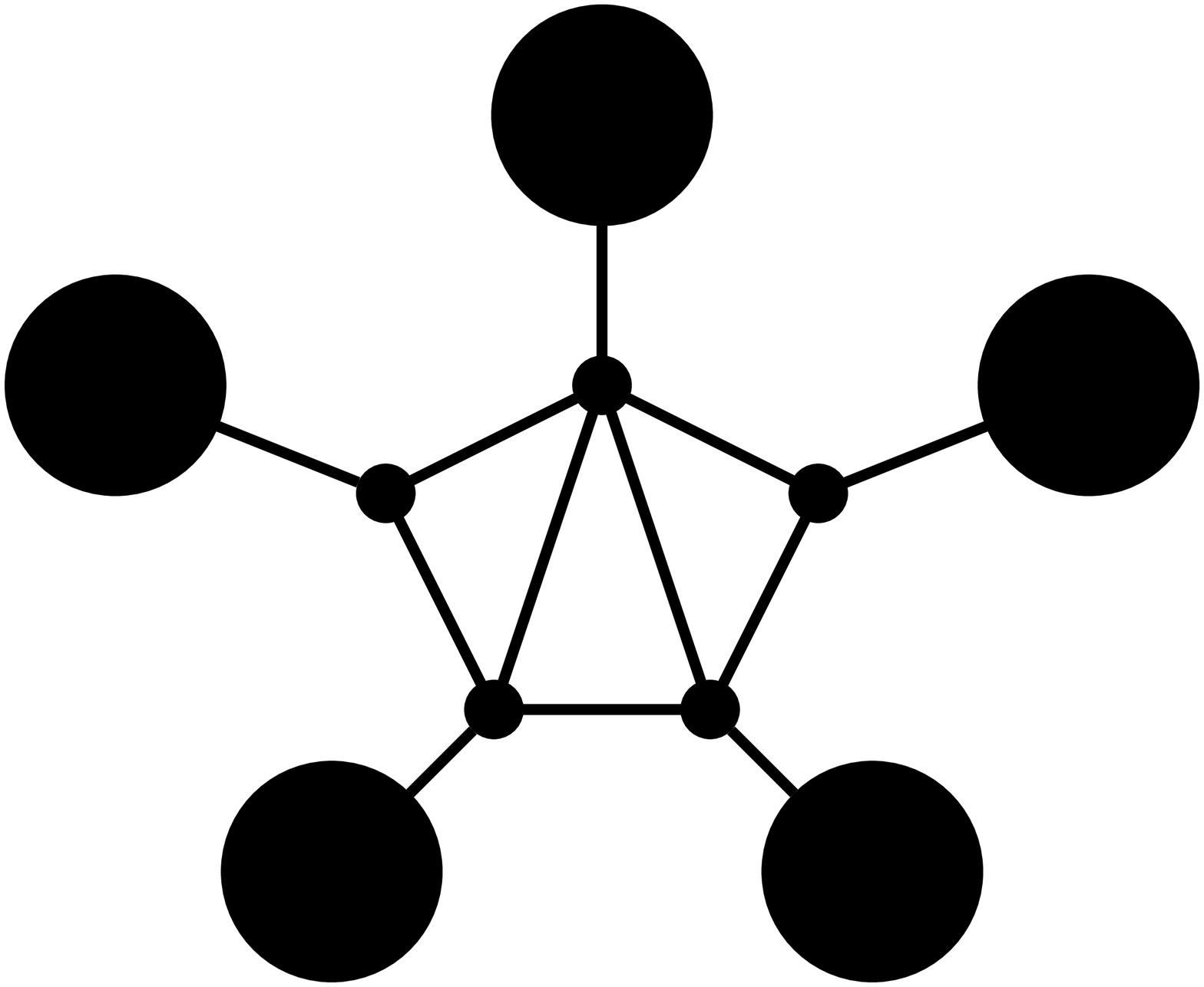} \\
$\Ho_{5,2}^1$ & $\Ho_{5,3}^1$ & $\Ho_{5,4}^1$ \\
    && \\
    \includegraphics[scale=0.15]{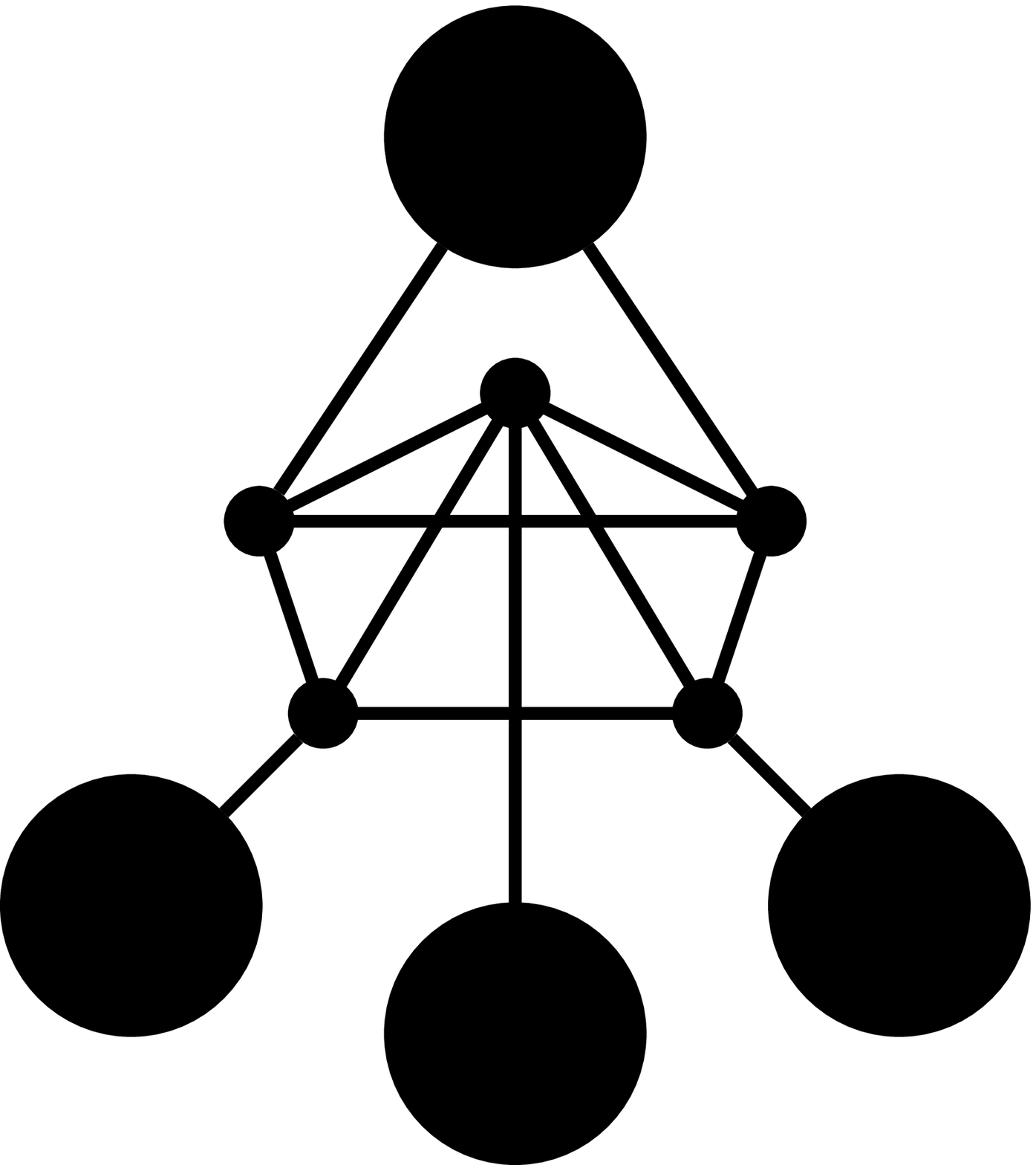} &
    \includegraphics[scale=0.15]{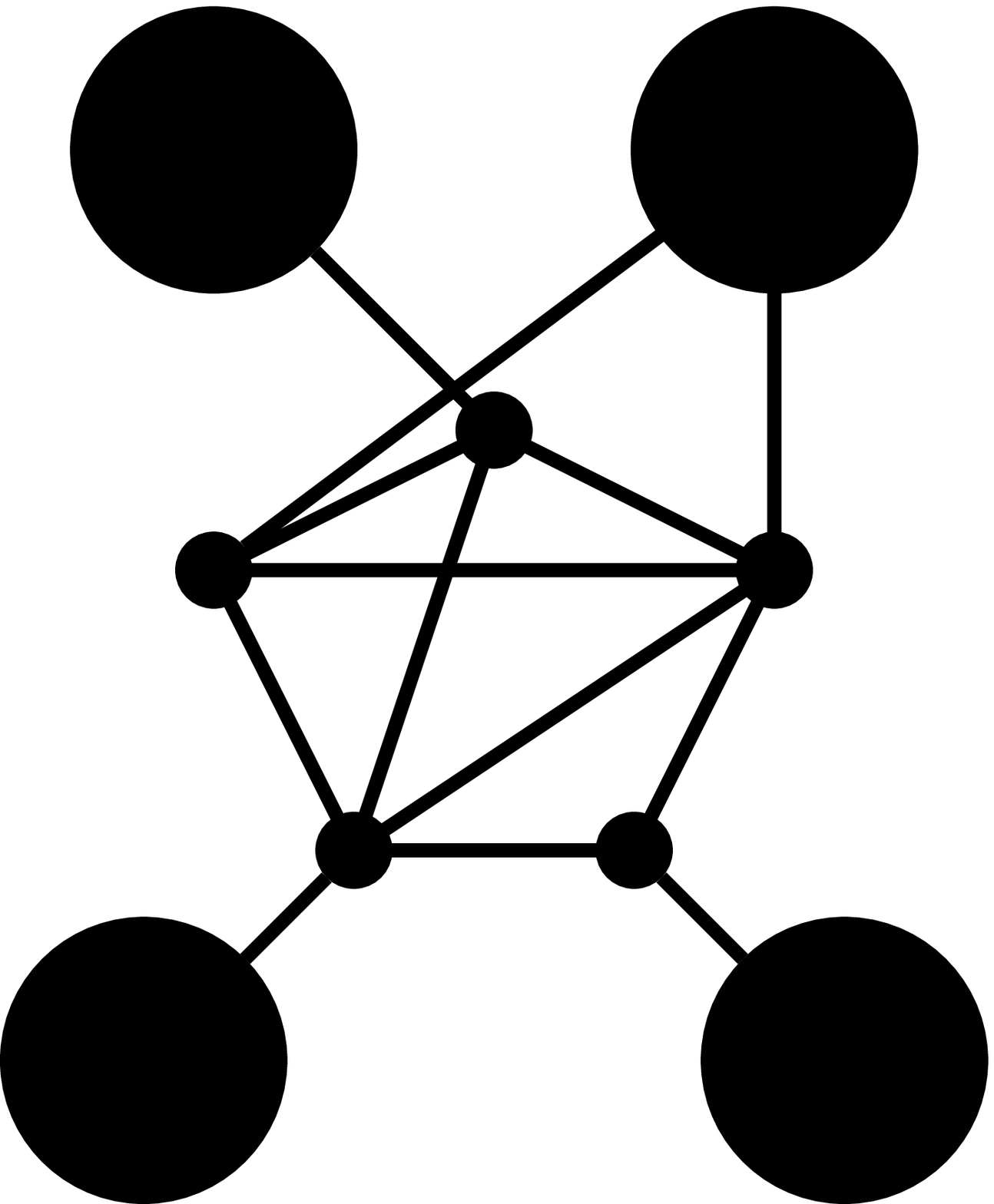} &
    \includegraphics[scale=0.15]{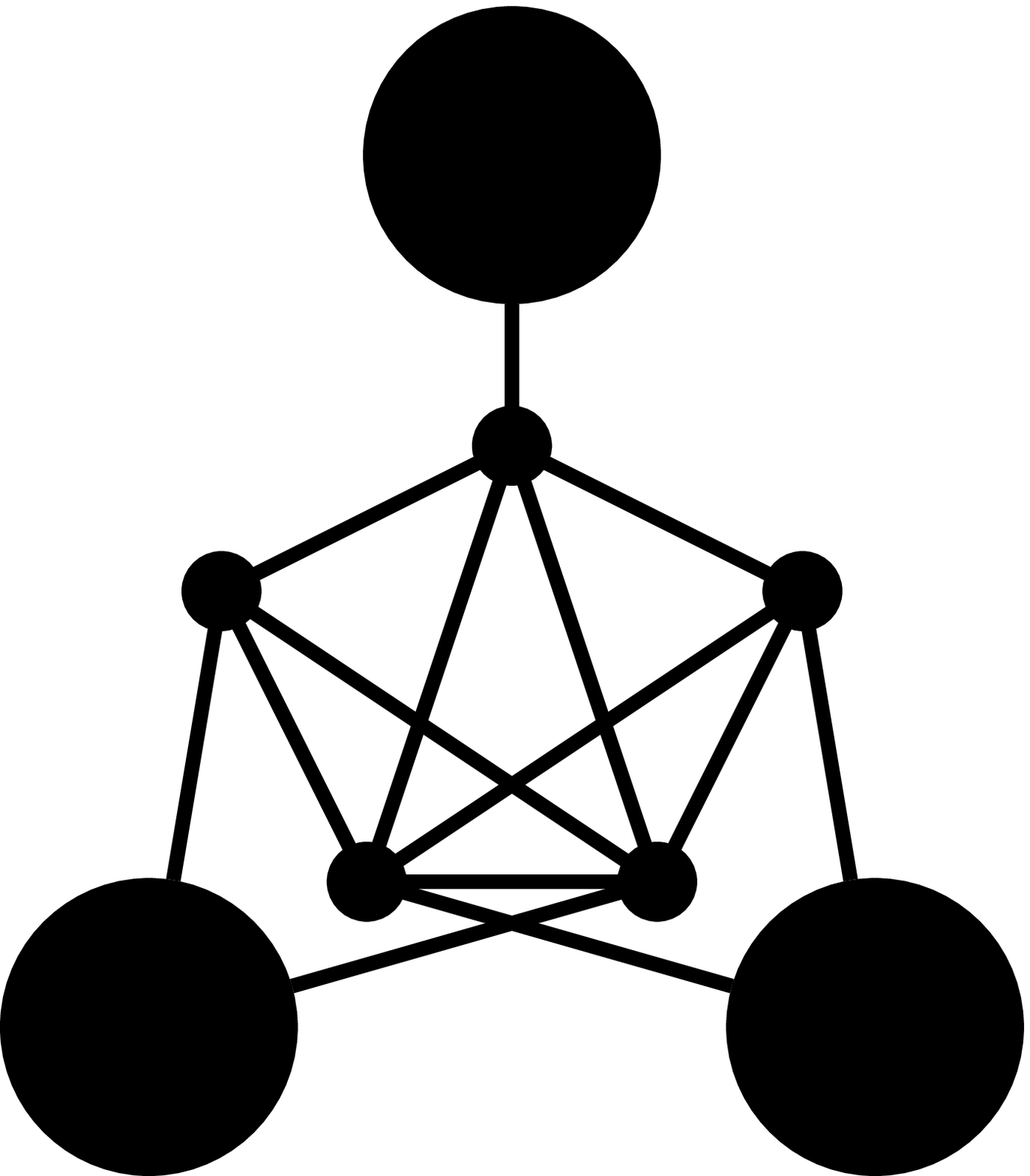} \\
$\Ho_{5,4}^2$ & $\Ho_{5,4}^3$ & $\Ho_{5,4}^4$  \\
    && \\
    \includegraphics[scale=0.15]{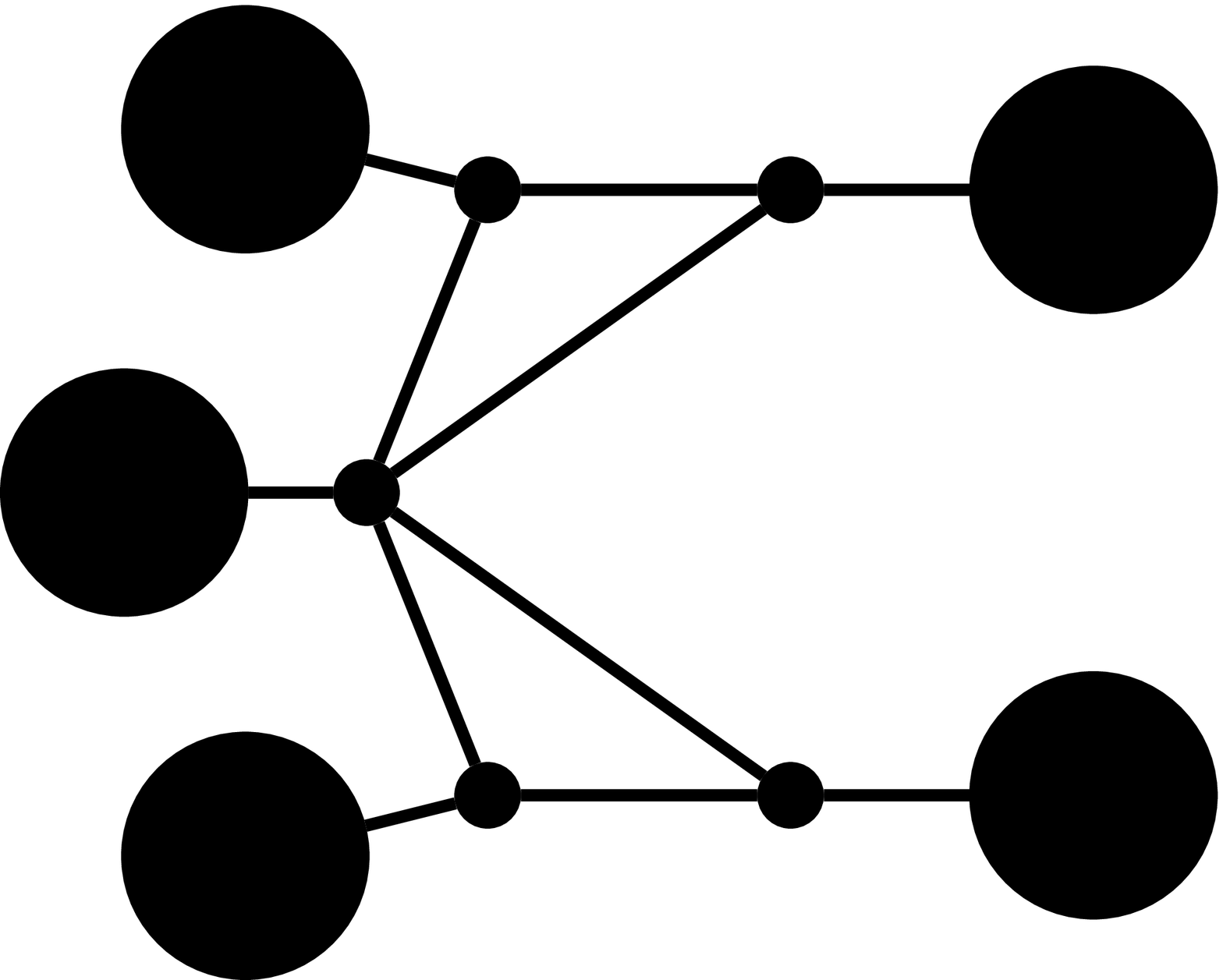} &
    \includegraphics[scale=0.15]{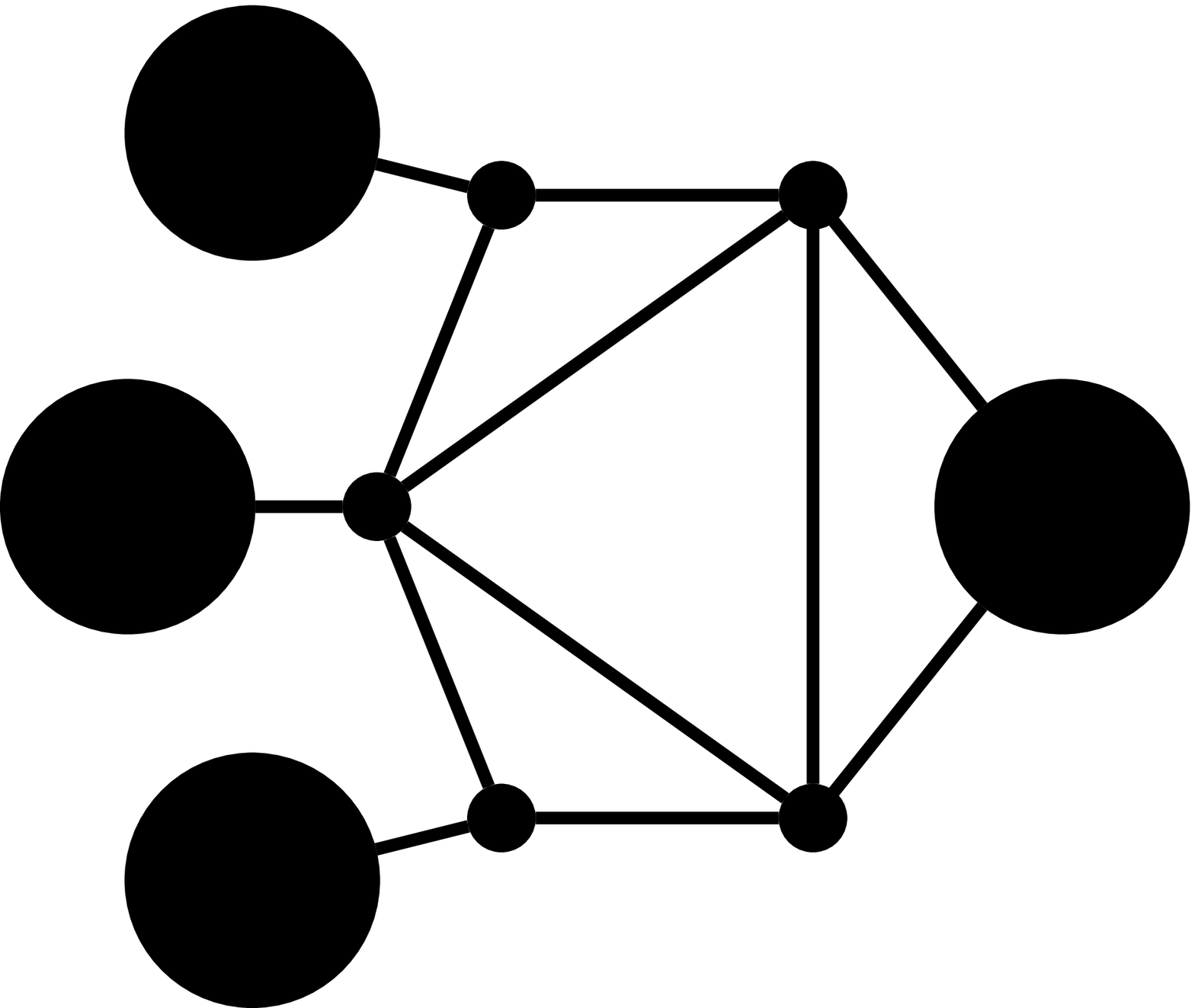} &
    \includegraphics[scale=0.15]{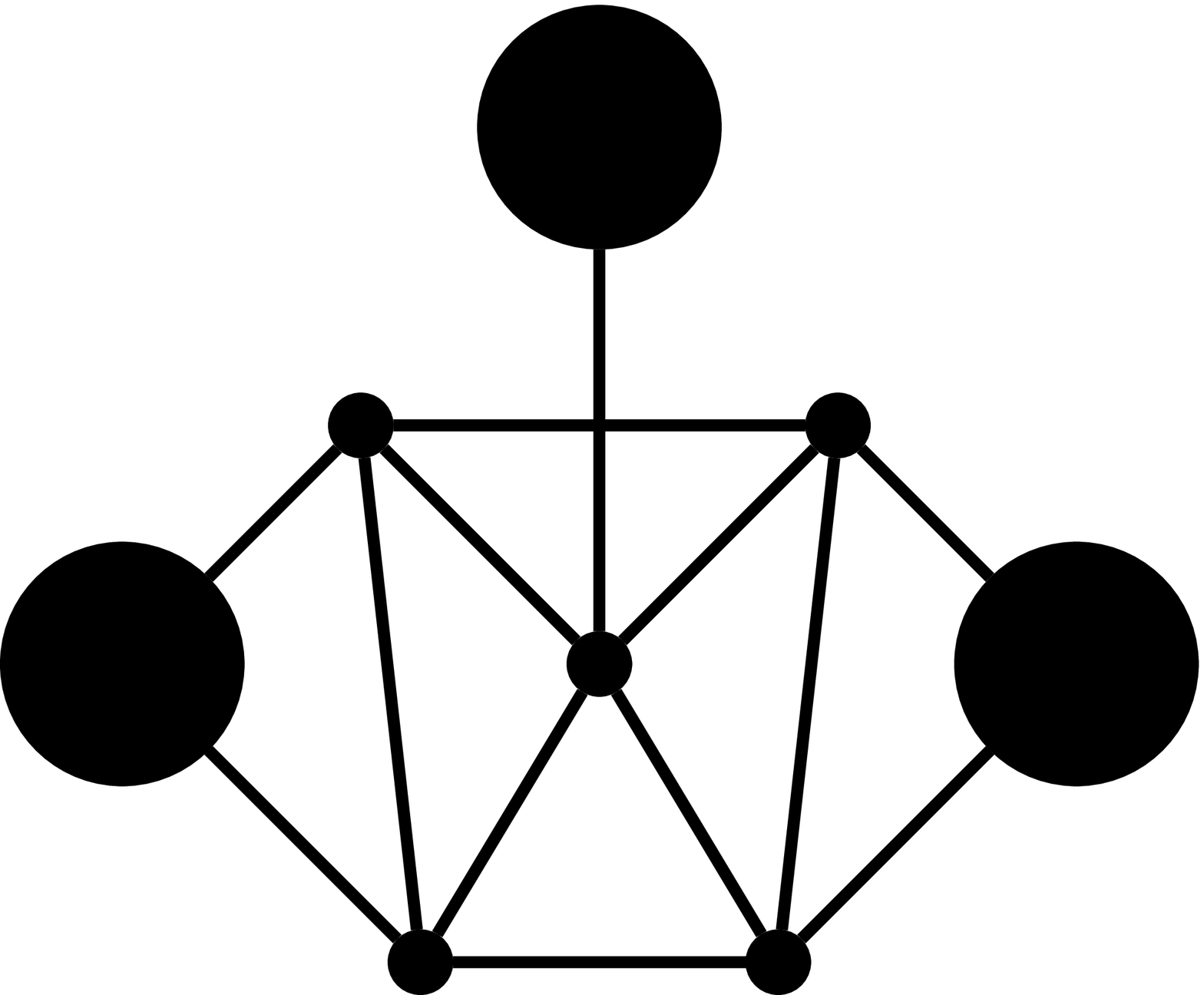} \\
$\Ho_{5,5}^1$ & $\Ho_{5,5}^2$ & $\Ho_{5,5}^3$ \\
    && \\
    \includegraphics[scale=0.15]{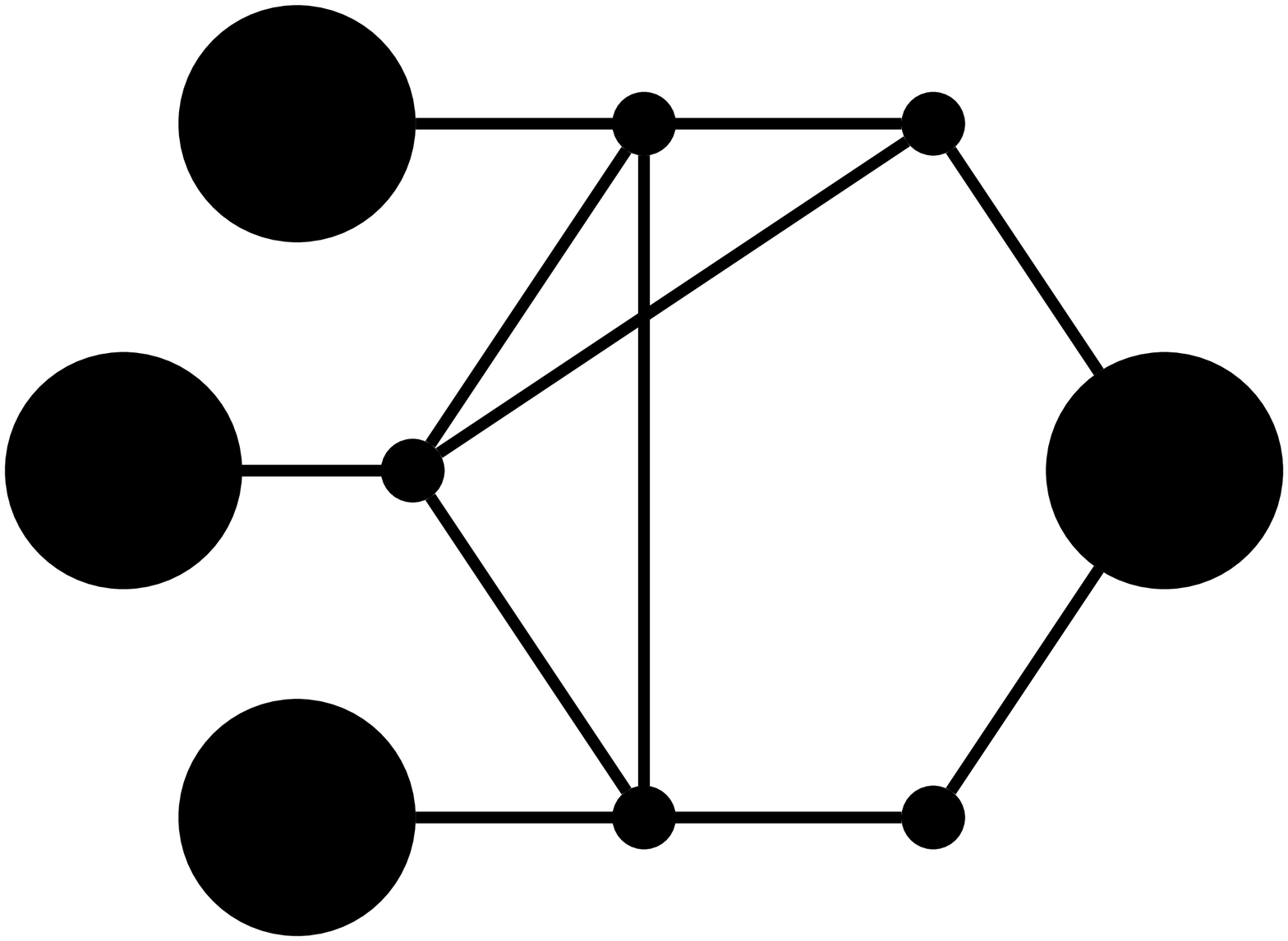} &
 &
 \\
$\Ho_{5,6}^1$ & & \\
\end{tabular}
\end{center}
\caption{The fat 
$(-1-\tau)$-ir\-re\-duc\-i\-ble 
Hoffman graphs with $5$ slim vertices}
\label{fig:FHG-5}
\end{figure}

\begin{figure}[!h]
\begin{center}
\begin{tabular}{ccc}
\includegraphics[scale=0.15]{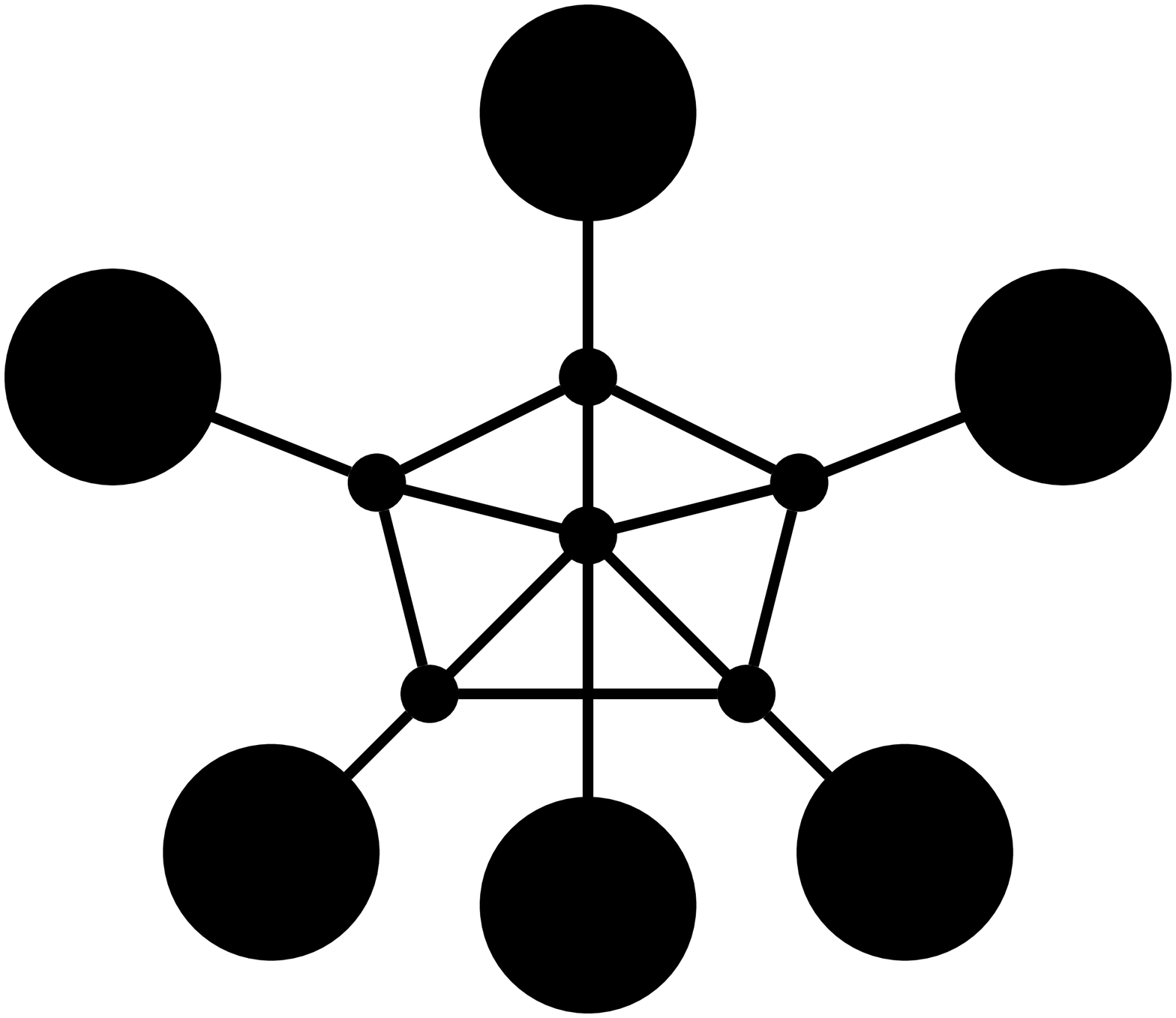} &
\includegraphics[scale=0.15]{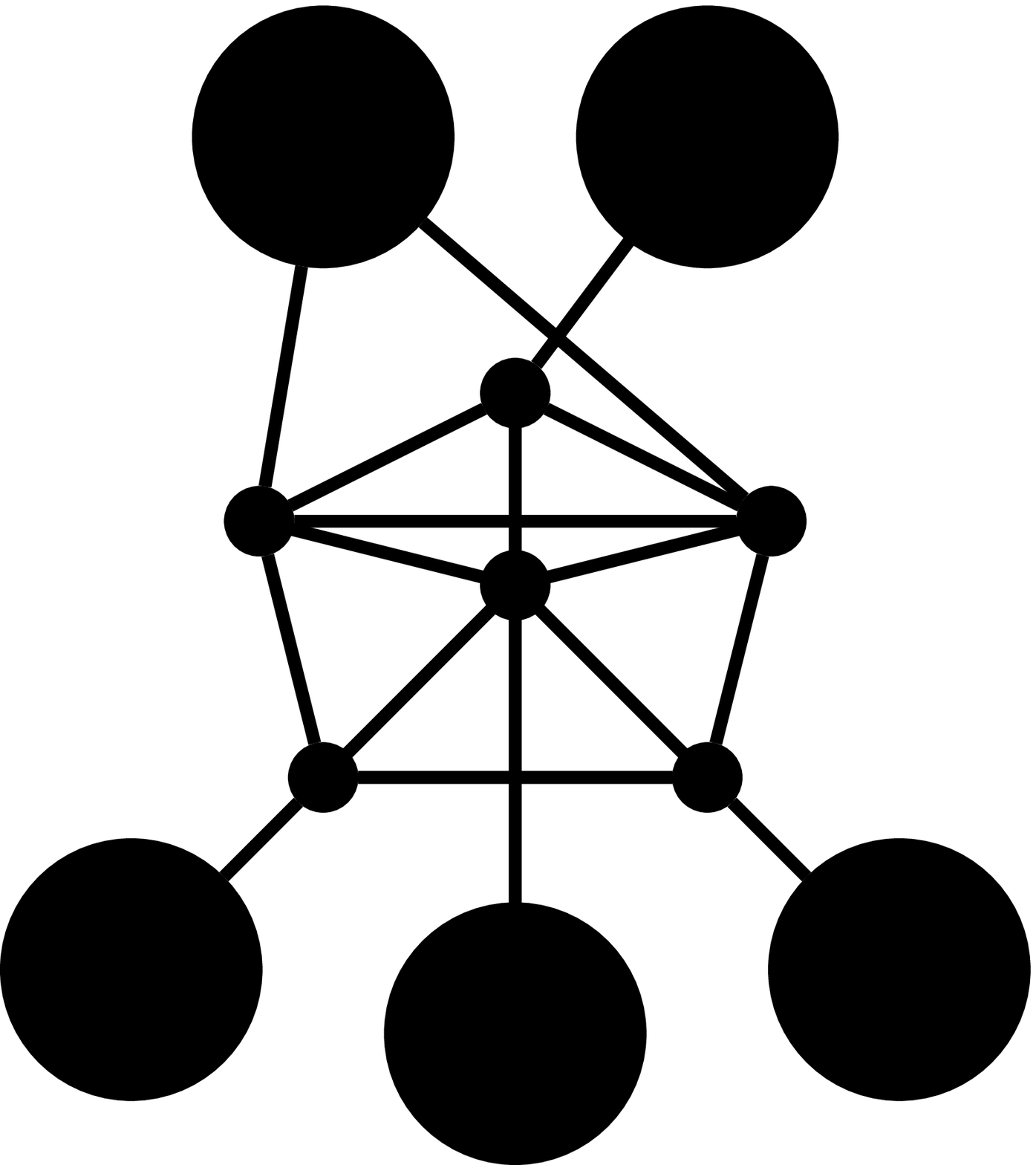} &
\includegraphics[scale=0.15]{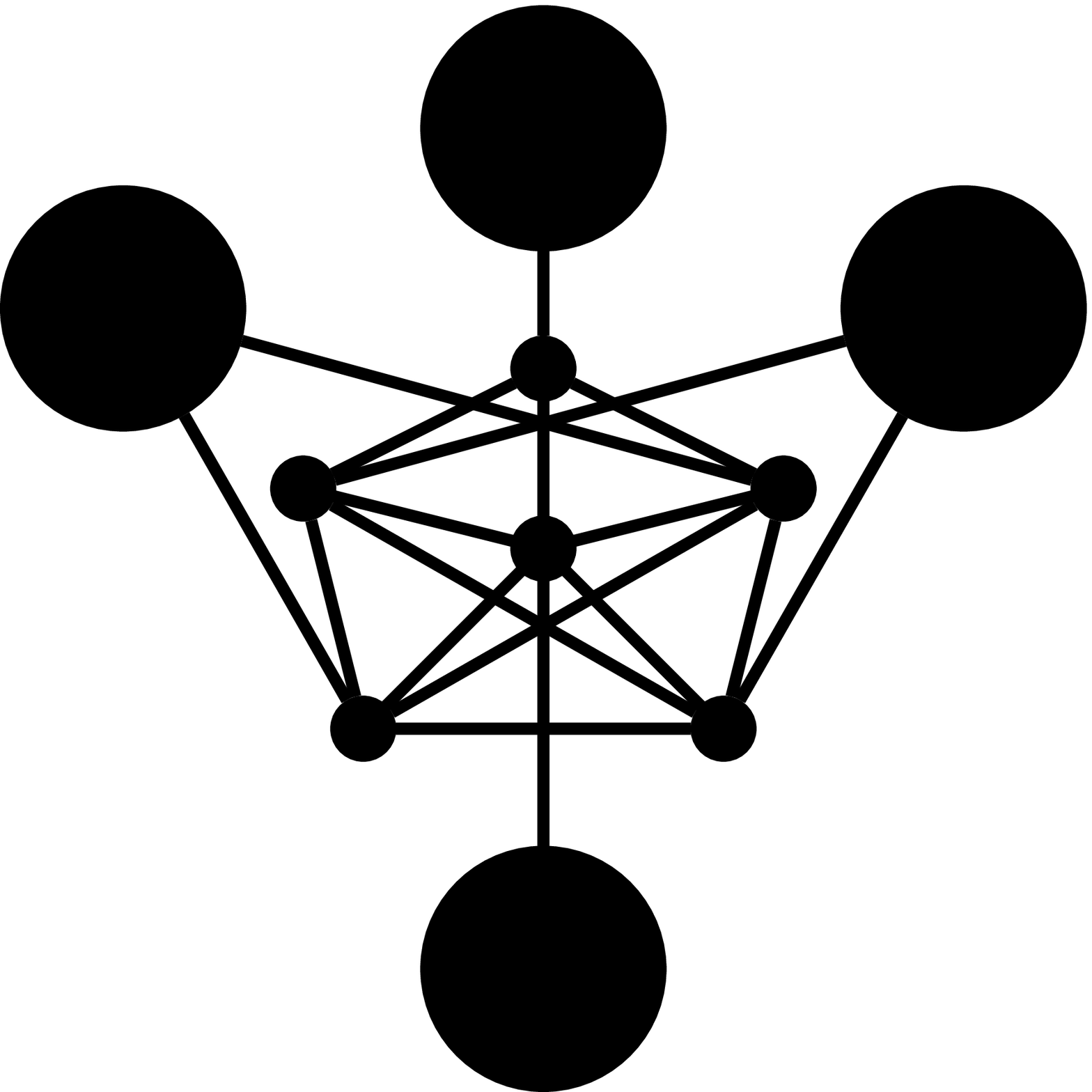} \\
$\Ho_{6,1}^1$ & $\Ho_{6,1}^2$ & $\Ho_{6,1}^3$ \\
    && \\
    && \\
\includegraphics[scale=0.15]{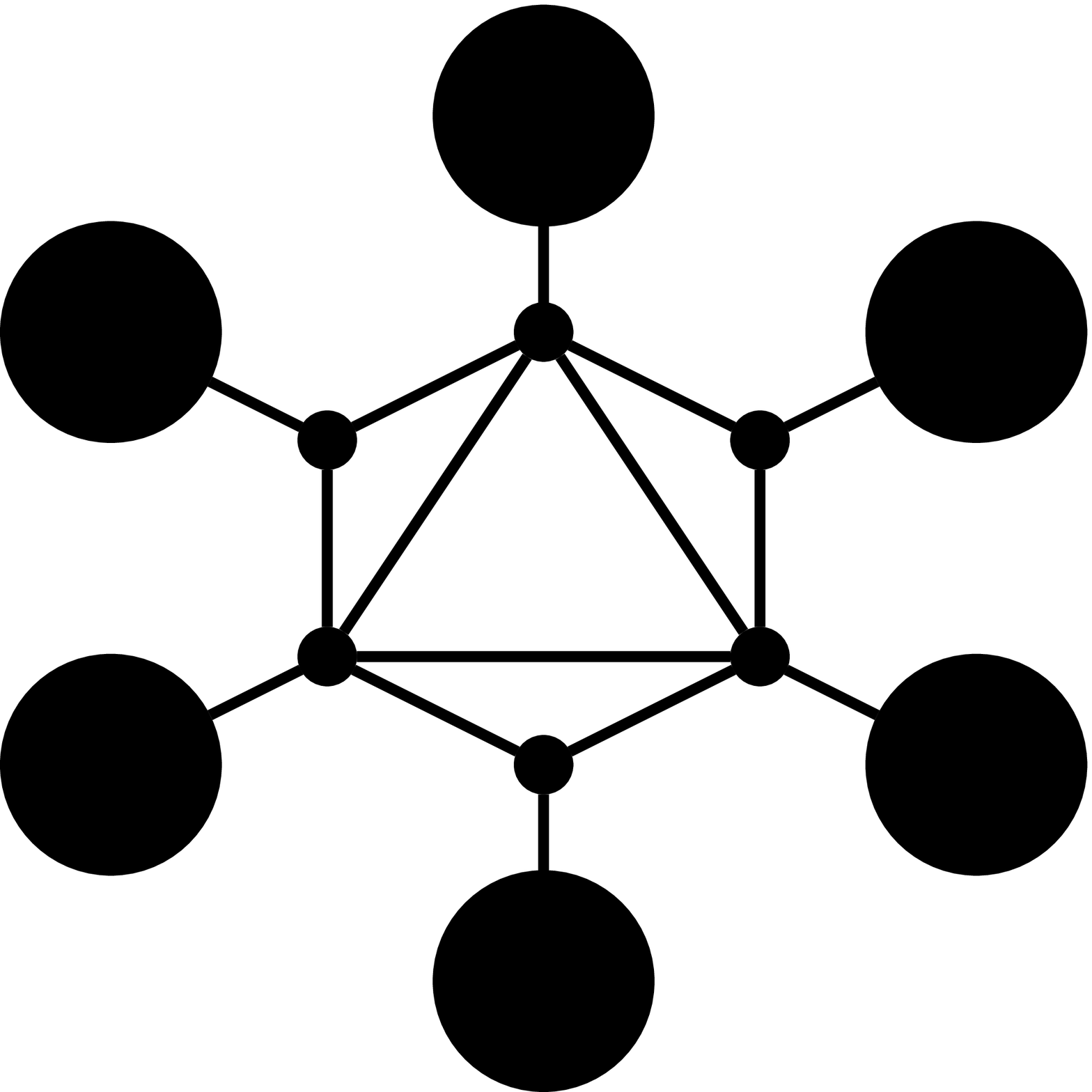} &
\includegraphics[scale=0.15]{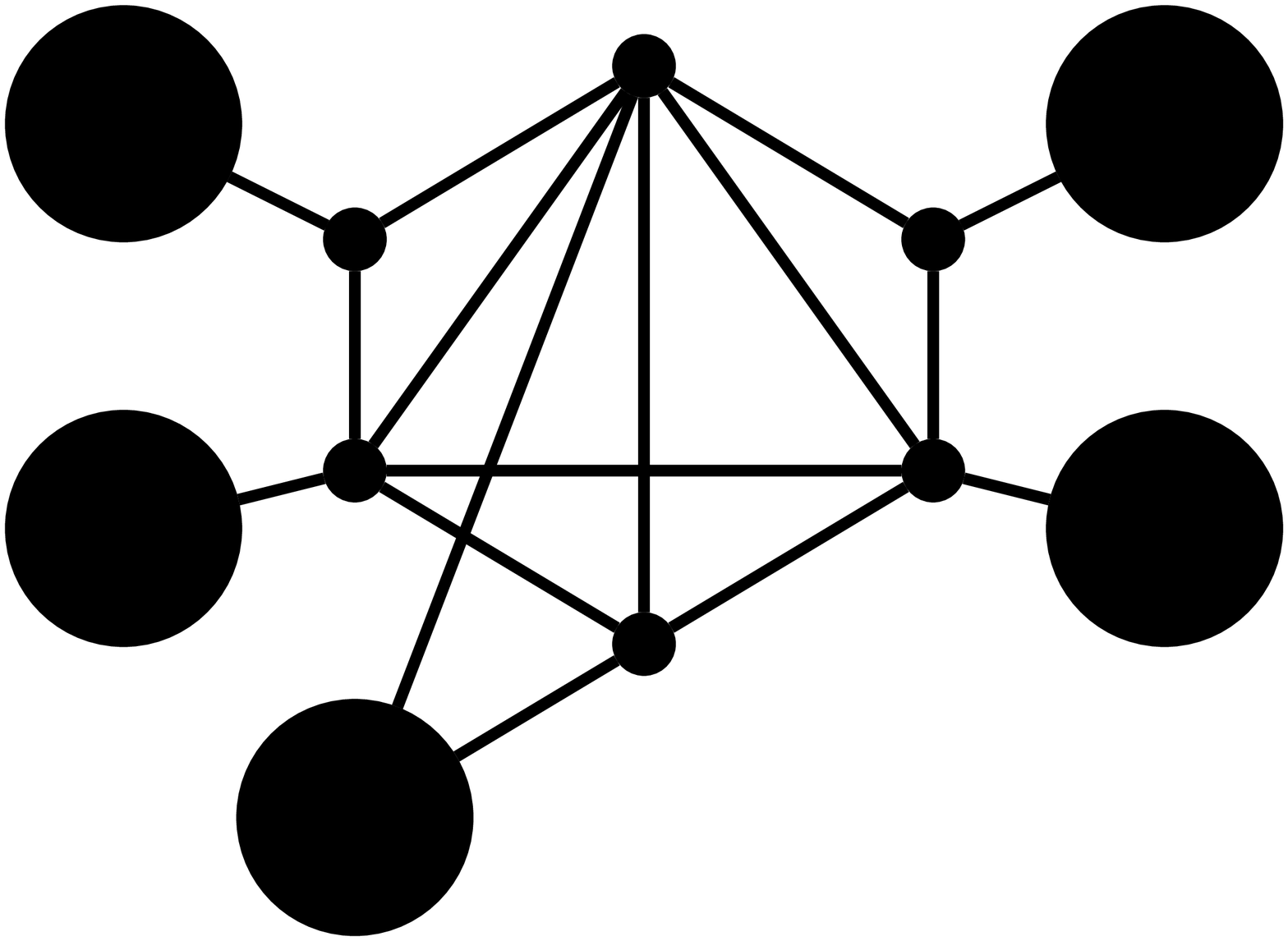} & 
\includegraphics[scale=0.15]{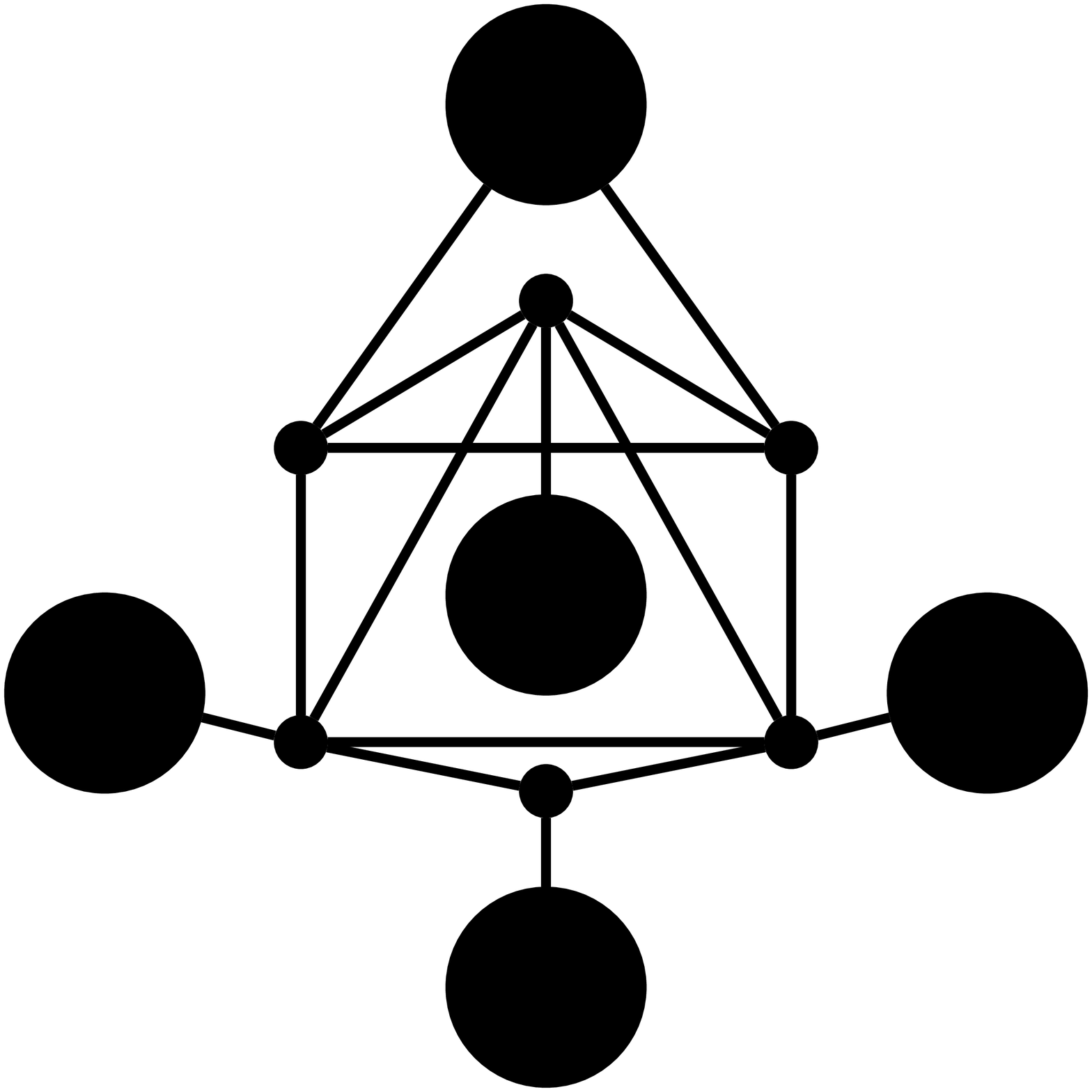} \\
$\Ho_{6,2}^1$ & $\Ho_{6,2}^2$ & $\Ho_{6,2}^3$ \\ 
    && \\
    && \\
\includegraphics[scale=0.15]{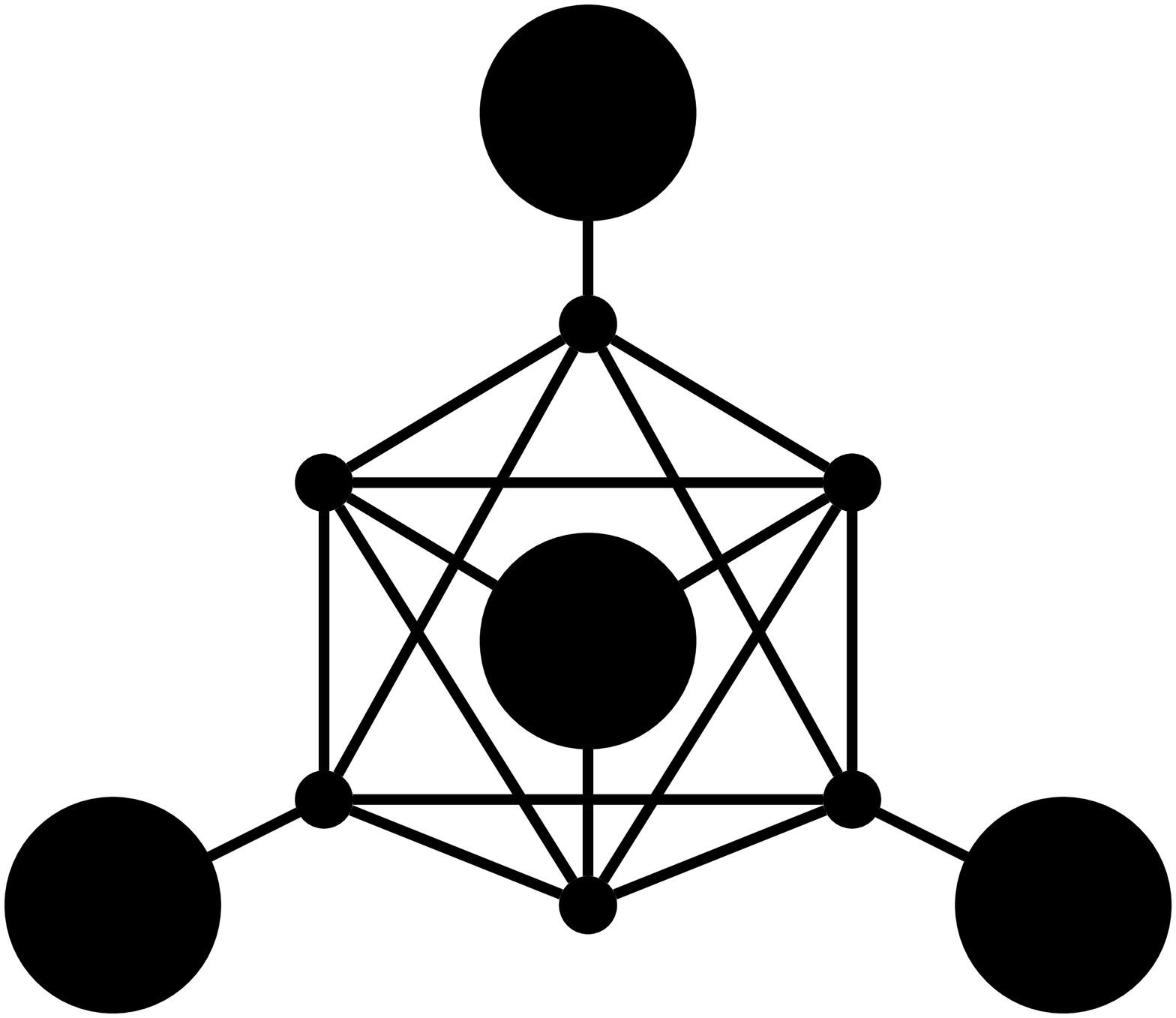} &
\includegraphics[scale=0.15]{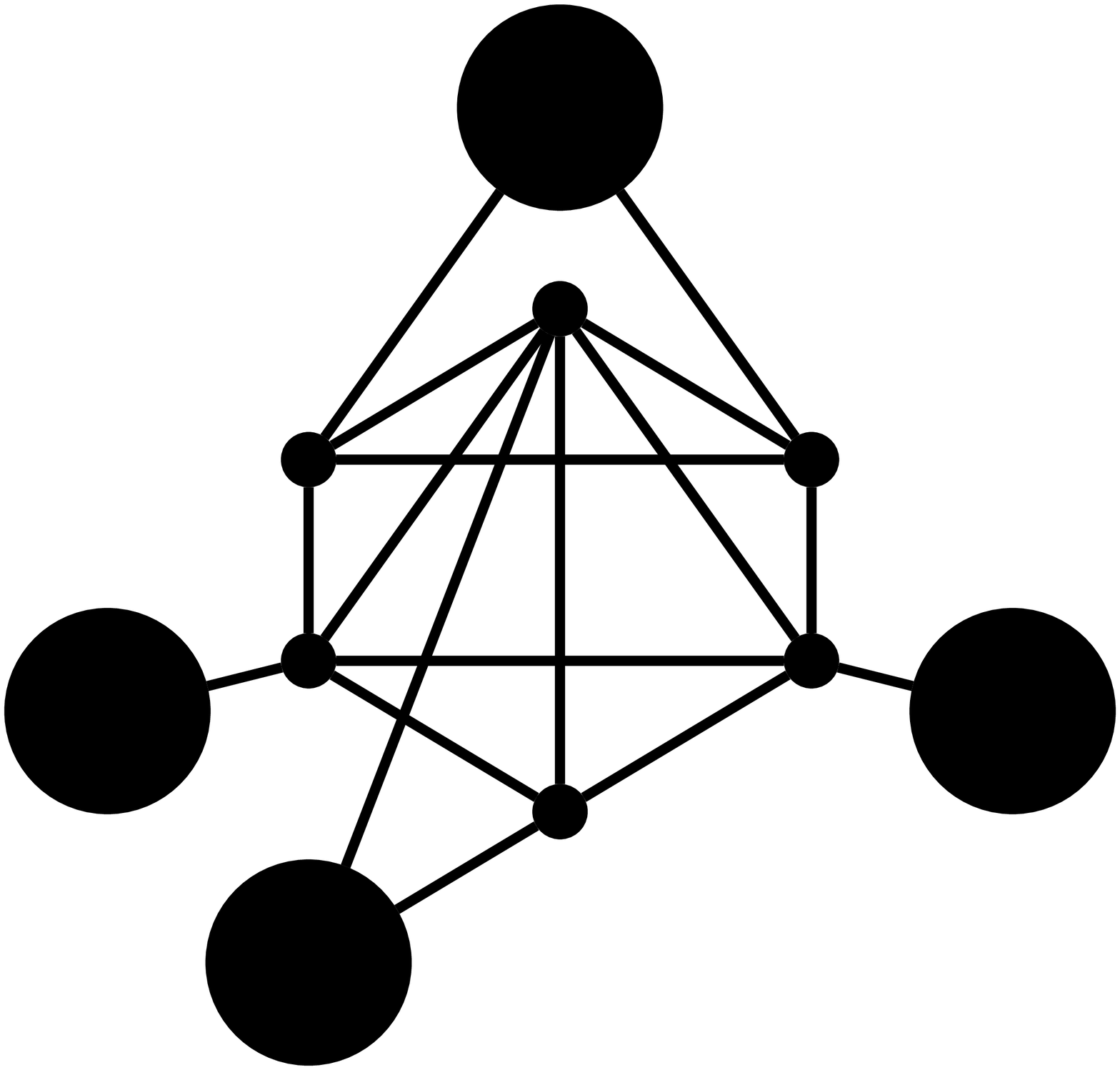} &
 \\
$\Ho_{6,2}^4$ & $\Ho_{6,2}^5$ &  \\
    && \\
    && \\
\includegraphics[scale=0.15]{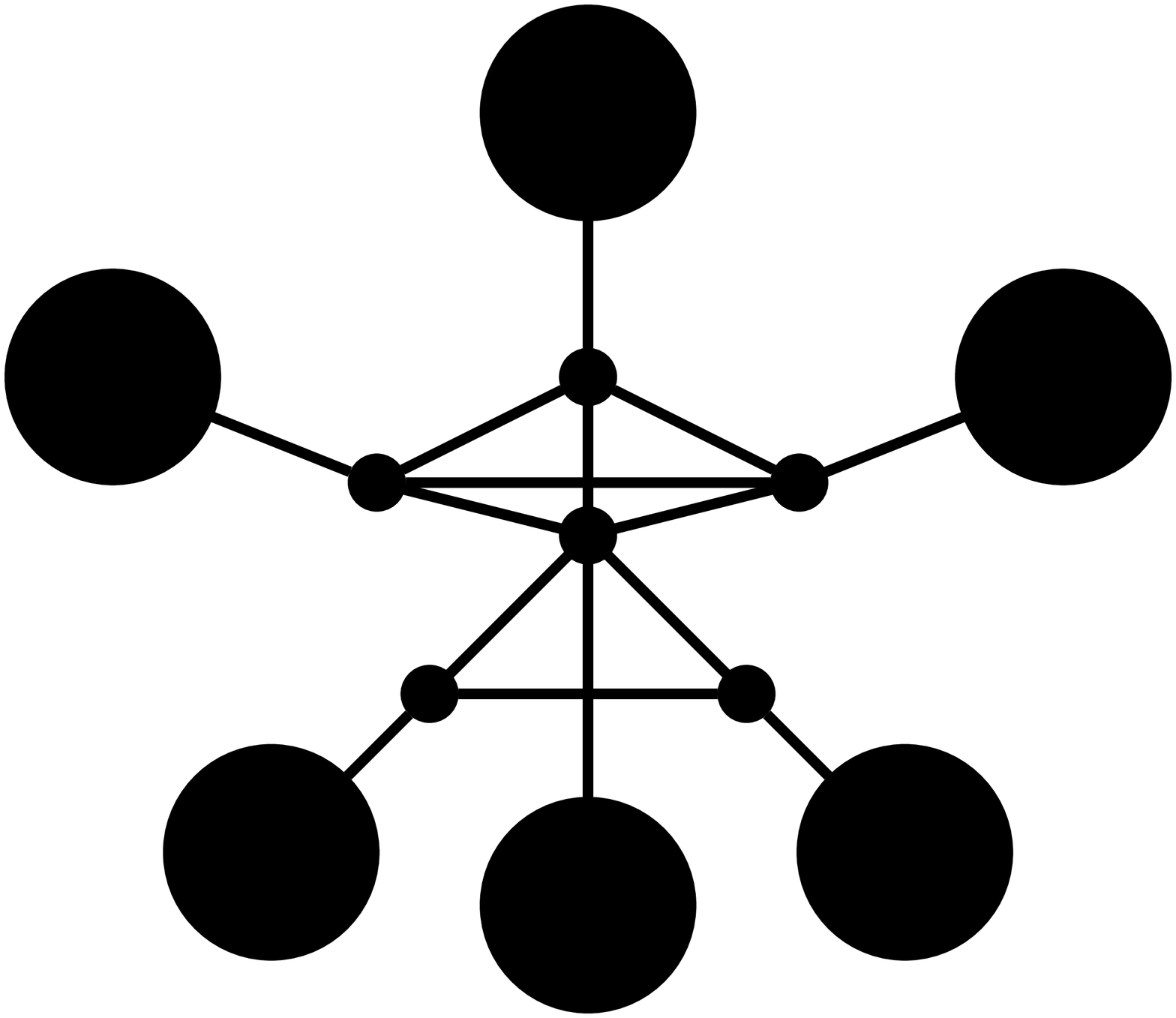} &
\includegraphics[scale=0.15]{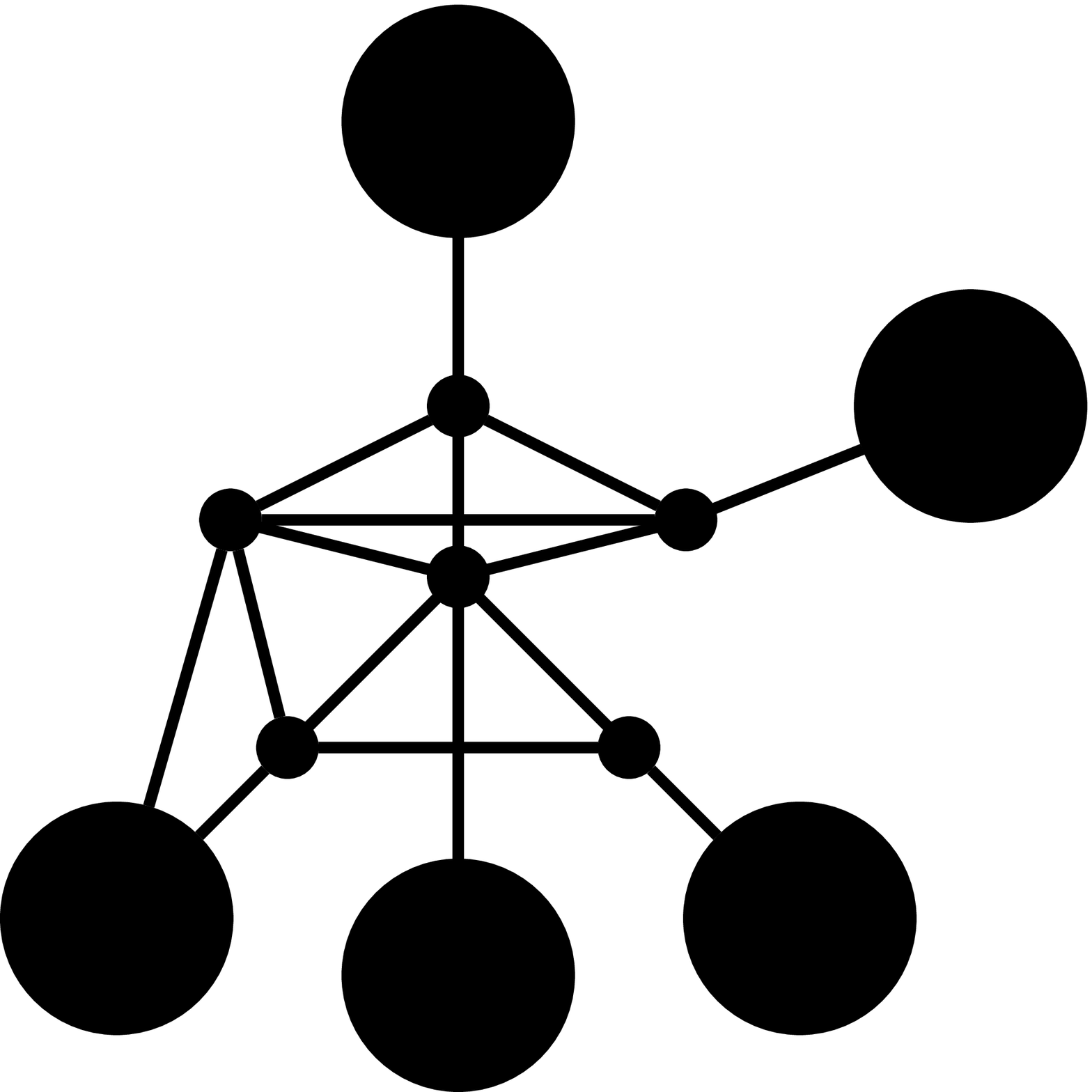} &
\includegraphics[scale=0.15]{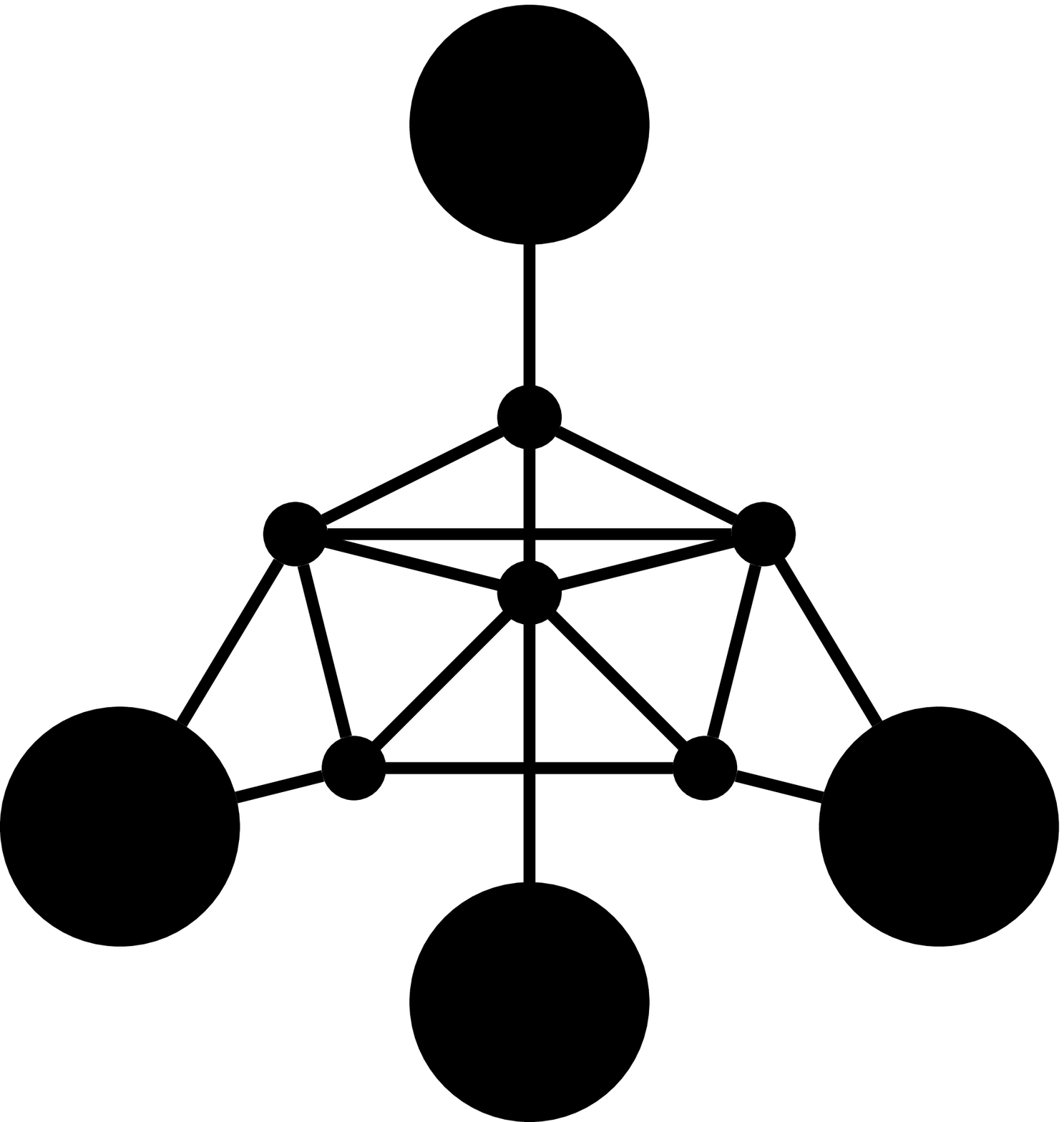} \\
$\Ho_{6,3}^1$ & $\Ho_{6,3}^2$ & $\Ho_{6,3}^3$ \\
    && \\
\end{tabular}
\end{center}
\caption{The fat 
$(-1-\tau)$-ir\-re\-duc\-i\-ble 
Hoffman graphs with $6$ slim vertices}
\label{fig:FHG-6}
\end{figure}

\begin{lemma}\label{lm:FI-1-2}
Let $\Ho$ be a fat 
indecomposable Hoffman graph 
with smallest eigenvalue at least $-1-\tau$. 
If the number of slim vertices of $\Ho$ is at most two, 
then $\Ho$ is isomorphic to one of 
$\Ho_{{\rm I}}$, $\Ho_{{\rm II}}$, $\Ho_{{\rm III}}$, 
$\Ho_{{\rm IV}}$, 
$\Ho_{{\rm XVI}}$, and $\Ho_{{\rm XVII}}$. 
\end{lemma}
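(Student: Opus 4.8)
The plan is to split into the cases of one and two slim vertices and, in each, to combine fatness, the bound of Lemma~\ref{lm:005} on the number of fat neighbors, and indecomposability (via the special graph, Lemma~\ref{lm:007}) to reduce the isomorphism type to a short list of numerical invariants, finishing with elementary $2\times 2$ eigenvalue computations read off from Lemma~\ref{lm:BH}.

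First I would dispose of the case of a single slim vertex $v$. Condition (i) of Definition~\ref{df:Hg} forces every fat vertex to be adjacent to $v$, so $V^f(\Ho)=N^f_{\Ho}(v)$; fatness gives $|N^f_{\Ho}(v)|\geq 1$ and Lemma~\ref{lm:005} gives $|N^f_{\Ho}(v)|\leq 2$. Hence $\Ho\cong\mathfrak{K}_{1,1}=\Ho_{{\rm I}}$ or $\Ho\cong\mathfrak{K}_{1,2}=\Ho_{{\rm II}}$, and indecomposability is automatic since a one-vertex special graph is connected.

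Next, suppose $\Ho$ has two slim vertices $v_1,v_2$. Since $\Ho$ is indecomposable, $\s(\Ho)$ is connected by Lemma~\ref{lm:007}, and the only connected edge-signed graphs on two vertices are $\s_{2,1}$ (a single $(+)$-edge) and $\s_{2,2}$ (a single $(-)$-edge). I would record the invariants $a:=|N^f_{\Ho}(v_1)|$, $b:=|N^f_{\Ho}(v_2)|$, and $c:=|N^f_{\Ho}(v_1)\cap N^f_{\Ho}(v_2)|$; fatness gives $a,b\geq 1$ and Lemma~\ref{lm:005} gives $a,b\leq 2$. By Lemma~\ref{lm:BH}, $B(\Ho)=\left(\begin{smallmatrix}-a & A_{v_1v_2}-c\\ A_{v_1v_2}-c & -b\end{smallmatrix}\right)$. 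In the $(+)$-edge case $v_1\sim v_2$ and $c=0$, so I enumerate $(a,b)\in\{1,2\}^2$: $(1,1)$ gives $\lambda_{\min}=-2$ (this is $\Ho_{{\rm IV}}$), $(2,1)$ and $(1,2)$ give $\lambda_{\min}=-1-\tau$ (this is $\Ho_{{\rm XVI}}$), while $(2,2)$ gives $\lambda_{\min}=-3<-1-\tau$ and is discarded. In the $(-)$-edge case $v_1\not\sim v_2$ and $1\leq c\leq\min(a,b)$, so the only possibilities are $(a,b,c)\in\{(1,1,1),(2,1,1),(1,2,1),(2,2,1),(2,2,2)\}$; here $(1,1,1)$ gives $-2$ ($\Ho_{{\rm III}}$), $(2,1,1)$ and $(1,2,1)$ give $-1-\tau$ ($\Ho_{{\rm XVII}}$), and $(2,2,1)$, $(2,2,2)$ give $-3$ and $-4$, which are discarded. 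Collecting the survivors yields exactly $\Ho_{{\rm III}},\Ho_{{\rm IV}},\Ho_{{\rm XVI}},\Ho_{{\rm XVII}}$, completing the list.

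The only point requiring care—rather than a genuine obstacle—is the claim that the pair consisting of the triple $(a,b,c)$ and the adjacency of $v_1v_2$ determines $\Ho$ up to isomorphism. This holds because the fat vertices are pairwise non-adjacent and each is adjacent to $v_1$ only, to $v_2$ only, or to both, so a fat vertex carries no structure beyond which of the two slim vertices it meets; thus the counts $a-c$, $b-c$, and $c$ of the three kinds of fat vertices, together with whether $v_1\sim v_2$, recover $\Ho$ uniquely. Everything else is a finite, routine verification.
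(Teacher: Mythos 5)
Your proposal is correct, and it fills in exactly the finite case check that the paper dismisses with the single word ``Straightforward'': bounding the number of fat neighbors via Lemma~\ref{lm:005}, using connectedness of $\s(\Ho)$ (Lemma~\ref{lm:007}) to force a single $(+)$- or $(-)$-edge when there are two slim vertices, and discarding the configurations whose $2\times 2$ matrix $B(\Ho)$ has smallest eigenvalue below $-1-\tau$. Your closing observation that the triple $(a,b,c)$ together with the adjacency of the two slim vertices determines $\Ho$ up to isomorphism is the right point to make explicit, since it is the only part of the verification that is not a pure computation.
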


\begin{proof}
Straightforward. 
\end{proof}

\begin{lemma}\label{lm:FI-3-}
Let $\Ho$ be a fat 
$(-1-\tau)$-ir\-re\-duc\-i\-ble Hoffman graph. 
If $\s(\Ho)$ is 
isomorphic to $\s_{i,j}$ 
in Figure \ref{fig:004}, 
then 
$\Ho$ is isomorphic to $\Ho_{i,j}^k$ for some $k$
in Figures~\ref{fig:FHG-4}, \ref{fig:FHG-5}, and \ref{fig:FHG-6}. 
\end{lemma}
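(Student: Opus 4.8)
The plan is to first show that the local structure at every slim vertex is rigid, then to observe that $\Ho$ is completely determined by its special graph together with a partition of the slim vertices, and finally to run through the fifteen graphs of Figure~\ref{fig:004} and match each admissible reconstruction with a Hoffman graph in Figures~\ref{fig:FHG-4}--\ref{fig:FHG-6}.

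First I would prove that every slim vertex of $\Ho$ has exactly one fat neighbor. Being $(-1-\tau)$-ir\-re\-duc\-i\-ble, $\Ho$ is indecomposable by Lemma~\ref{lm:002} and satisfies $\lambda_{\min}(\Ho)\geq -1-\tau$, and by Lemma~\ref{lm:005} each slim vertex has at most two fat neighbors. If some slim vertex had two fat neighbors, Lemma~\ref{lm:004} would force $\s(\Ho)$ to be one of $\mathcal{Q}_{0,0,1}$, $\mathcal{Q}_{1,0,1}$, $\mathcal{Q}_{0,1,1}$, each having at most two vertices and hence not isomorphic to any $\s_{i,j}$ in Figure~\ref{fig:004}, which have between three and six vertices. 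Since $\Ho$ is fat, every slim vertex therefore has exactly one fat neighbor.

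Next I would set up the reconstruction. Writing $f(v)$ for the unique fat neighbor of a slim vertex $v$, the map $v\mapsto f(v)$ partitions $V^s(\Ho)$ into classes, one per fat vertex, and two distinct slim vertices have a common fat neighbor exactly when they lie in the same class. Reading off the definition of $\s(\Ho)$, a pair $\{u,v\}$ is a $(+)$-edge iff $u,v$ are adjacent and lie in different classes, a $(-)$-edge iff they are non-adjacent and lie in the same class, and a non-edge iff they are either adjacent in a common class or non-adjacent in different classes. Hence, once the class partition $P$ is fixed, the whole of $\Ho$ is recovered from $(\s_{i,j},P)$: the slim adjacency is forced on the signed edges, and on a non-edge $\{u,v\}$ one has $u$ adjacent to $v$ iff $u,v$ lie in the same class. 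The admissible $\Ho$ are thus parameterised by the partitions $P$ that are coarser than the partition into connected components of the $(-)$-edge subgraph and place no $(+)$-edge inside a class. Moreover, since each slim vertex has a single fat neighbor, the diagonal matrix of Lemma~\ref{p:002} is the identity, so $B(\Ho)=M(\s(\Ho))-I$ and $\lambda_{\min}(\Ho)=\lambda_{\min}(\s_{i,j})-1\geq -1-\tau$ for every such $P$; the eigenvalue bound imposes no extra restriction, and only $(-1-\tau)$-ir\-re\-duc\-i\-bil\-ity remains to be tested.

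Finally I would carry out the enumeration. For each of the fifteen special graphs the connected components of the $(-)$-edge subgraph are the atoms, and an admissible $P$ is precisely a grouping of these atoms in which no two atoms joined by a $(+)$-edge are placed together; since $\s_{i,j}$ has at most six vertices there are only finitely many such groupings, and each yields one Hoffman graph $\Ho_P$. It then remains to discard the $\Ho_P$ that are $(-1-\tau)$-re\-duc\-i\-ble and to check that the surviving ones are exactly the graphs $\Ho_{i,j}^k$ of Figures~\ref{fig:FHG-4}--\ref{fig:FHG-6}. I expect this last bookkeeping to be the main obstacle: for every admissible partition of every one of the fifteen graphs one must decide $(-1-\tau)$-ir\-re\-duc\-i\-bil\-ity, exhibiting an explicit reducing extension in the reducible cases, and confirm that the irreducible reconstructions coincide with the listed $\Ho_{i,j}^k$. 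This is a finite but lengthy case analysis, most cleanly settled by the same computer search used in the proof of Theorem~\ref{p:005++}.
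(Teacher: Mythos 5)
Your proposal follows the same route as the paper: the paper's own proof consists of invoking Lemma~\ref{lm:004} to conclude that every slim vertex has exactly one fat neighbor and then declaring the remaining finite check ``straightforward.'' Your reconstruction of $\Ho$ from $(\s(\Ho),P)$ via the fat-neighbor partition, the observation that $B(\Ho)=M(\s(\Ho))-I$ makes the eigenvalue bound automatic, and the concluding finite enumeration are a correct and faithful expansion of exactly that ``straightforward'' step.
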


\begin{proof}
By Lemma~\ref{lm:004}, every slim vertex has exactly
one fat neighbor. 
It is then straightforward to establish the lemma. 
\end{proof}

\begin{theorem}\label{thm:28}
Let $\Ho$ be a fat 
$(-1-\tau)$-ir\-re\-duc\-i\-ble Hoffman graph. 
Then $\Ho$ is isomorphic to one of 
$\Ho_{{\rm I}}$, $\Ho_{{\rm II}}$, $\Ho_{{\rm III}}$, 
$\Ho_{{\rm XVI}}$, $\Ho_{{\rm XVII}}$, and 
the $32$ Hoffman graphs 
given in Figures \ref{fig:FHG-4}, \ref{fig:FHG-5}, and \ref{fig:FHG-6}.
\end{theorem}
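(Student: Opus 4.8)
The plan is to assemble Theorem~\ref{thm:28} from the structural results already in hand, by a case analysis on the number of slim vertices of $\Ho$. Since $\Ho$ is $(-1-\tau)$-irreducible, Lemma~\ref{lm:002} guarantees that $\Ho$ is indecomposable, so Corollary~\ref{thm:001-2} applies and restricts $\s(\Ho)$ to be one of $\mathcal{Q}_{0,0,1}$, $\mathcal{Q}_{1,0,1}$, $\mathcal{Q}_{0,1,1}$, or one of the $15$ edge-signed graphs in Figure~\ref{fig:004}. Because the number of vertices of $\s(\Ho)$ equals $|V^s(\Ho)|$, the three graphs $\mathcal{Q}_{0,0,1}$, $\mathcal{Q}_{1,0,1}$, $\mathcal{Q}_{0,1,1}$ have at most two vertices, whereas the $15$ graphs in Figure~\ref{fig:004} each have between $3$ and $6$ vertices; this gives a clean split of the argument.

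First I would dispose of the case $|V^s(\Ho)|\le 2$. Here Lemma~\ref{lm:FI-1-2} supplies the six fat indecomposable candidates $\Ho_{{\rm I}}$, $\Ho_{{\rm II}}$, $\Ho_{{\rm III}}$, $\Ho_{{\rm IV}}$, $\Ho_{{\rm XVI}}$, $\Ho_{{\rm XVII}}$. Of these, $\Ho_{{\rm IV}}$ must be discarded, since by Example~\ref{ex:H-4} it is indecomposable but reducible, hence not $(-1-\tau)$-irreducible. The remaining five are precisely the graphs named separately in the statement, and each is genuinely irreducible by Examples~\ref{ex:002} and~\ref{ex:H-16-17}.

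Next, for $|V^s(\Ho)|\ge 3$ the three small $\mathcal{Q}$-graphs are ruled out on the count of vertices, so $\s(\Ho)$ must be one of the $15$ graphs $\s_{i,j}$ in Figure~\ref{fig:004}. Lemma~\ref{lm:FI-3-} then identifies $\Ho$ with one of the graphs $\Ho_{i,j}^k$ drawn in Figures~\ref{fig:FHG-4}, \ref{fig:FHG-5}, and~\ref{fig:FHG-6}. Finally I would tally the output: the five named graphs together with the $8+13+11=32$ graphs of those three figures, giving exactly the $37$ graphs asserted and confirming the count advertised in the abstract.

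The genuinely delicate work lies in the two recovery lemmas rather than in this assembly. A special graph does not determine a Hoffman graph uniquely, so proving Lemmas~\ref{lm:FI-1-2} and~\ref{lm:FI-3-} requires, for each admissible special graph, enumerating all consistent placements of fat vertices (using that every slim vertex has exactly one fat neighbor, which follows from Lemma~\ref{lm:004} once the two-fat-neighbor vertices have been handled) and then discarding those whose smallest eigenvalue drops below $-1-\tau$ or which turn out to be reducible. In the present theorem that effort is inherited, so the only points to guard against are the exhaustiveness of the case split and the easy-to-overlook exclusion of the reducible graph $\Ho_{{\rm IV}}$.
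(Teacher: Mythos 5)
Your proposal is correct and follows essentially the same route as the paper: invoke Corollary~\ref{thm:001-2} to constrain $\s(\Ho)$, split on whether $\Ho$ has at most two or at least three slim vertices, and finish with Lemma~\ref{lm:FI-1-2} together with Example~\ref{ex:H-4} (to exclude the reducible $\Ho_{\rm IV}$) in the first case and Lemma~\ref{lm:FI-3-} in the second. The paper's proof is just a terser version of the same assembly.
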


\begin{proof}
By Corollary~\ref{thm:001-2}, the special graph $\s(\Ho)$ of
$\Ho$ is isomorphic to 
$\mathcal{Q}_{0,0,1}$, $\mathcal{Q}_{1,0,1}$, $\mathcal{Q}_{0,1,1}$, 
or one of the $15$ 
edge-signed graphs in Figure~\ref{fig:004}. 
If the number of slim vertices of $\Ho$ is at most two, 
then the statement holds 
by Lemma \ref{lm:FI-1-2} and Example \ref{ex:H-4}. 
If the number of slim vertices of $\Ho$ is at least three, 
then the statement holds 
by Lemma \ref{lm:FI-3-}. 
\end{proof}

\begin{theorem}
Let $\mathcal{H}$ be the set of isomorphism classes of
the maximal members of the $37$ fat 
$(-1-\tau)$-ir\-re\-duc\-i\-ble Hoffman graphs given 
in Theorem~\ref{thm:28}, 
with respect to taking 
induced Hoffman subgraphs. 
More precisely, $\mathcal{H}$
is the set of isomorphism classes of 
$\Ho_{{\rm XVI}}$, $\Ho_{{\rm XVII}}$, 
$\Ho_{4,1}^1$, $\Ho_{4,3}^2$, 
$\Ho_{5,2}^1$, $\Ho_{5,3}^1$, 
$\Ho_{5,6}^1$, 
and the $11$ Hoffman graphs in Figure~\ref{fig:FHG-6}. 
Then every fat Hoffman graph with smallest eigenvalue
at least 
$-1-\tau$ 
is an $\mathcal{H}$-line graph.
\end{theorem}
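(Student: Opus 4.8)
The plan is to prove, by induction on the number $|V^s(\Ho)|$ of slim vertices, that every fat Hoffman graph $\Ho$ with $\lambda_{\min}(\Ho)\ge -1-\tau$ is an $\mathcal{H}$-line graph, splitting the step according to whether $\Ho$ is $(-1-\tau)$-irreducible or $(-1-\tau)$-reducible. The argument rests on two facts that I would isolate first. The first is that each of the $37$ irreducible graphs is itself an $\mathcal{H}$-line graph. The second is a \emph{closure} (or gluing) statement: if $\Ho=\bar\Ho^1\uplus\bar\Ho^2$ and each $\bar\Ho^i$ is an $\mathcal{H}$-line graph, then so is $\Ho$. I would also record the trivial observation that the class of $\mathcal{H}$-line graphs is closed under taking induced Hoffman subgraphs, since an induced subgraph of an induced subgraph is again an induced subgraph; this lets me pass freely to larger ambient graphs.

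For the irreducible case, suppose $\Ho$ is fat and $(-1-\tau)$-irreducible. By Theorem~\ref{thm:28} it is isomorphic to one of the $37$ listed graphs. These form a finite poset under ``is isomorphic to an induced Hoffman subgraph of,'' so $\Ho$ lies below some maximal member, which by the definition of $\mathcal{H}$ belongs to $\mathcal{H}$. Thus $\Ho$ is an induced Hoffman subgraph of a graph admitting the trivial one-part decomposition into a member of $\mathcal{H}$, and hence is an $\mathcal{H}$-line graph.

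For the reducible case, I would use the definition of $(-1-\tau)$-reducibility to obtain $\Ho'\supseteq\Ho$ (induced) with a decomposition $\Ho'=\Ho^1\uplus\Ho^2$ where $\lambda_{\min}(\Ho^i)\ge-1-\tau$ and $V^s(\Ho^i)\cap V^s(\Ho)\neq\emptyset$. To feed this into the induction I first cut down to the slim vertices of $\Ho$: let $\Ho''$ be the subgraph of $\Ho'$ induced by $V^s(\Ho)$ together with all fat vertices of $\Ho'$ having a neighbour in $V^s(\Ho)$, and let $\bar\Ho^i$ be the corresponding restriction of $\Ho^i$. One checks that $\{\bar\Ho^1,\bar\Ho^2\}$ is a decomposition of $\Ho''$, the conditions (i)--(iv) of Definition~\ref{df:002} being inherited (for (iv) one notes that any common fat neighbour of two slim vertices of $\Ho$ survives into $\Ho''$); that each $\bar\Ho^i$ is fat, since every slim vertex lies in $V^s(\Ho)$ and keeps a fat neighbour, which lands in $\bar\Ho^i$ by condition (iii); that $\lambda_{\min}(\bar\Ho^i)\ge\lambda_{\min}(\Ho^i)\ge-1-\tau$ by Lemma~\ref{lm:001}; and that each $\bar\Ho^i$ has strictly fewer slim vertices, because both pieces meet $V^s(\Ho)$ in a proper nonempty subset. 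The inductive hypothesis makes each $\bar\Ho^i$ an $\mathcal{H}$-line graph, the closure lemma makes $\Ho''$ one, and then $\Ho\subseteq\Ho''$ does too.

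The hard part will be the closure lemma, and inside it the verification of decomposition condition (iv) at the fat vertices shared by $\bar\Ho^1$ and $\bar\Ho^2$. Writing $\bar\Ho^i\subseteq\tilde\Ho^i=\biguplus_j\Ho^{i,j}$ with $\Ho^{i,j}\in\mathcal{H}$, I would build $\tilde\Ho$ by taking the added vertices of the two witnesses disjoint and identifying only the fat vertices already common to $\bar\Ho^1$ and $\bar\Ho^2$, so that $\{\Ho^{i,j}\}_{i,j}$ becomes a decomposition of $\tilde\Ho$ containing $\Ho$ as an induced subgraph. The danger is precisely a new slim vertex of $\tilde\Ho^1$ attached to a shared fat vertex $f$ together with a slim vertex of $\bar\Ho^2$ attached to the same $f$: these would be nonadjacent yet share the fat neighbour $f$, violating (iv). I expect the resolution to be that a shared fat vertex plays the role of the central vertex $f^*$ in Lemma~\ref{p:4-001}: each irreducible piece incident with $f$ also carries a private (unshared) fat vertex, and the enlargement to a maximal member of $\mathcal{H}$ can be arranged to attach its new slim vertices only to private or newly created fat vertices, leaving the boundary fat vertices untouched. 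Proving that this is always possible—equivalently, describing how the maximal members of $\mathcal{H}$ extend each of the $37$ irreducibles without creating slim neighbours at shared fat vertices—is the main obstacle, and I anticipate it will require the explicit list of the $37$ graphs and their maximal extensions, together with Lemma~\ref{lm:004}, which confines slim vertices with two fat neighbours to pieces with at most two slim vertices and thereby controls how badly fat vertices can be shared.
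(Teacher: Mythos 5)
Your overall architecture --- induction on $|V^s(\Ho)|$, splitting the step into the $(-1-\tau)$-irreducible case (handled via Theorem~\ref{thm:28} and maximality) and the $(-1-\tau)$-reducible case (handled by restricting the witnessing decomposition to $V^s(\Ho)$ and inducting on the two pieces) --- is coherent, and the reduction in the reducible case to the fat pieces $\bar\Ho^1,\bar\Ho^2$ with fewer slim vertices is carried out correctly. But the argument is not complete: everything funnels into the closure lemma (``if $\Ho=\bar\Ho^1\uplus\bar\Ho^2$ and each $\bar\Ho^i$ is an $\mathcal{H}$-line graph, then so is $\Ho$''), which you state, correctly identify as the crux, and then do not prove. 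The difficulty you flag is real: enlarging $\bar\Ho^i$ to a witness $\tilde\Ho^i$ that decomposes into members of $\mathcal{H}$ may add slim vertices, and a new slim vertex of $\tilde\Ho^1$ attached to a fat vertex shared with $\bar\Ho^2$ would violate condition (iv) of Definition~\ref{df:002} against the slim vertices of $\bar\Ho^2$. Your proposed fix --- that the enlargement can always be arranged to attach new slim vertices only to private or newly created fat vertices --- is precisely what would have to be verified against the explicit list of the $37$ irreducible graphs and their maximal extensions; since this lemma is invoked at every step of your induction, the proof cannot stand without it. As written, the proposal is a plan with its hardest step left open.

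For comparison, the paper takes a different route that largely sidesteps this. It reduces at once to fat \emph{indecomposable} $\Ho$ and reads off the special graph from Theorem~\ref{thm:001-1}: if some slim vertex has two fat neighbours, then by Lemmas~\ref{lm:004} and \ref{lm:FI-1-2} $\Ho$ is one of six graphs on at most two slim vertices, each checked directly; if every slim vertex has exactly one fat neighbour and $\s(\Ho)\cong\mathcal{Q}_{p,q,r}$, then Lemma~\ref{p:4-001} supplies an explicit witness $\Ho'$ with $V^s(\Ho')=V^s(\Ho)$ (only a single new fat vertex is added), so no new slim vertices appear and the decomposition is exhibited concretely; and in the sporadic case Lemma~\ref{lm:FI-3-} identifies $\Ho$ with one of the $32$ listed graphs, which sits inside a maximal member of $\mathcal{H}$. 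To be fair, the paper's opening reduction to indecomposable graphs quietly relies on a gluing principle of the same flavour as your closure lemma; your proposal has the virtue of making that dependence explicit, but the corresponding obligation is then to discharge it, and that is exactly what is missing.
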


\begin{proof}
It suffices to show that every fat indecomposable
Hoffman graph $\Ho$ with smallest eigenvalue
at least $-1-\tau$ is an $\mathcal{H}$-line graph. 
First suppose that some slim vertex of $\Ho$ has two fat neighbors.
Then by Lemma~\ref{lm:004}, 
$\Ho$ has at most two slim vertices, and
by Lemma~\ref{lm:FI-1-2},
$\Ho$ is isomorphic to one of 
$\Ho_{{\rm I}}$, $\Ho_{{\rm II}}$, $\Ho_{{\rm III}}$, 
$\Ho_{{\rm IV}}$, $\Ho_{{\rm XVI}}$, and $\Ho_{{\rm XVII}}$. 
Since $\Ho_{{\rm I}}$ and $\Ho_{{\rm II}}$ are
induced Hoffman subgraphs of $\Ho_{{\rm XVI}}$,
and $\Ho_{{\rm III}}$ is an
induced Hoffman subgraph of $\Ho_{{\rm XVII}}$,
they are $\mathcal{H}$-line graphs. 
Note that $\Ho_{{\rm IV}}$ is also an
$\mathcal{H}$-line graph since Example~\ref{ex:H-4}
shows that $\Ho_{{\rm IV}}$ is an induced Hoffman subgraph of
a Hoffman graph having a decomposition 
into two induced Hoffman subgraphs
isomorphic to $\Ho_{{\rm I}}$.
Thus the result holds in this case. 

Next suppose that every slim vertex of $\Ho$ has exactly
one fat neighbor. Then by Theorem~\ref{thm:001-1} (ii),
$\s(\Ho)$ is isomorphic to $\mathcal{Q}_{p,q,r}$ 
for some non-neg\-a\-tive integers $p,q,r$ with 
$p+q\leq r$ 
or one of the $15$ edge-signed graphs in 
Figure~\ref{fig:004}. 
In the former case, $\Ho$ is an 
$\mathcal{H}$-line graph by Lemma~\ref{p:4-001}. 
In the latter case, Lemma \ref{lm:FI-3-} implies that $\Ho$ is an 
$\mathcal{H}$-line graph since $\mathcal{H}$ contains 
all the maximal members of the isomorphism classes of 
Hoffman graphs $\Ho_{i,j}^k$. 
\end{proof}


\end{document}